\def\csname ver@fixltx2e.sty\endcsname{}
\let\proof\relax 
\let\endproof\relax
\newlength{\dhatheight}
\newcommand{\doublehat}[1]{%
    \settoheight{\dhatheight}{\ensuremath{\hat{#1}}}%
    \addtolength{\dhatheight}{-0.25ex}%
    \hat{\vphantom{\rule{1pt}{\dhatheight}}%
    \smash{\hat{#1}}}}
\newcolumntype{P}[1]{>{\centering\arraybackslash}p{#1}}
\newtheorem{theorem}{Theorem}
\newtheorem{lemma}{Lemma}
\newtheorem{corollary}[theorem]{Corollary}
\newtheorem{claim}{Claim}
\newtheorem{remark}{Remark}
\newenvironment{claimproof}{\proof}{\endproof}
\def\BibTeX{{\rm B\kern-.05em{\sc i\kern-.025em b}\kern-.08em
    T\kern-.1667em\lower.7ex\hbox{E}\kern-.125emX}}
\begin{document}

\title{Inverse Particle Filter}

\author{Himali Singh, Arpan Chattopadhyay$^\ast$ and Kumar Vijay Mishra$^\ast$\vspace{-24pt}
\thanks{$^\ast$A. C. and K. V. M. have made equal contributions.}
\thanks{H. S. and A. C. are with the Electrical Engineering Department, Indian Institute of Technology (IIT) Delhi, India. A. C. is also associated with the Bharti School of Telecommunication Technology and Management, IIT Delhi. Email: \{eez208426, arpanc\}@ee.iitd.ac.in.} 
\thanks{K. V. M. is with the United States DEVCOM Army Research Laboratory, Adelphi, MD 20783 USA. E-mail: kvm@ieee.org.}
\thanks{A. C. acknowledges support via grant no. GP/2021/ISSC/022 from I-Hub Foundation for Cobotics and grant no. CRG/2022/003707 from Science and Engineering Research Board (SERB), India. H. S. acknowledges support via Prime Minister Research Fellowship. K. V. M. acknowledges support from the National Academies of Sciences, Engineering, and Medicine via Army Research Laboratory Harry Diamond Distinguished Fellowship.}
}

\maketitle

\begin{abstract}
In cognitive systems, recent emphasis has been placed on studying the cognitive processes of the subject whose behavior was the primary focus of the system's cognitive response. This approach, known as \textit{inverse cognition}, arises in counter-adversarial applications and has motivated the development of inverse Bayesian filters. In this context, a cognitive adversary, such as a radar, uses a forward Bayesian filter to track its target of interest. An inverse filter is then employed to infer the adversary's estimate of the target's or defender's state. Previous studies have addressed this inverse filtering problem by introducing methods like the inverse Kalman filter (KF), inverse extended KF, and inverse unscented KF. However, these filters typically assume additive Gaussian noise models and/or rely on local approximations of non-linear dynamics at the state estimates, limiting their practical application. In contrast, this paper adopts a global filtering approach and presents the development of an inverse particle filter (I-PF). The particle filter framework employs Monte Carlo methods to approximate arbitrary posterior distributions. Moreover, under mild system-level conditions, the proposed I-PF demonstrates convergence to the optimal inverse filter. Additionally, we propose the differentiable I-PF to address scenarios where system information is unknown to the defender. Using the recursive Cram\'{e}r-Rao lower bound and non-credibility index, our numerical experiments for different systems demonstrate the estimation performance and time complexity of the proposed filter.
\end{abstract}

\begin{IEEEkeywords}
Bayesian filtering; cognitive systems; counter-adversarial systems; inverse filtering; particle filter.
\end{IEEEkeywords}

\vspace{-8pt}
\section{Introduction}
\label{sec:introduction}
Several applications in engineering, such as communications, sensing, and robotics, frequently employ \textit{cognitive} agents that perceive their surroundings and adapt their actions based on the information learned to attain optimum efficiency. A cognitive surveillance radar \cite{mishra2023next,mishra2020toward}, for example, modifies its transmit waveform and receive processing to enhance target detection \cite{mishra2017performance} and tracking \cite{bell2015cognitive,sharaga2015optimal}. In this context, \textit{inverse cognition} has recently been introduced as a method for a `defender' agent to identify its adversarial `attacker' agent's cognitive behavior and infer the information learned about the defender \cite{krishnamurthy2019how,krishnamurthy2020identifying}. This facilitates the development of counter-adversarial systems to assist or desist the adversary. For instance, an intelligent target can observe its adversarial radar's waveform adaptations and develop smart interference that forces the latter to change its course of actions \cite{krishnamurthy2021adversarial,kang2023smart}. Interactive learning, fault diagnosis, and cyber-physical security are further examples of counter-adversarial applications \cite{krishnamurthy2019how,mattila2020hmm}. A similar formulation can also be found in inverse reinforcement learning (IRL) \cite{ng2000algorithms} wherein the associated reward function is passively learned based on the behavior of an expert. In contrast, the inverse cognition agent actively explores its adversarial agent and can thus be regarded as a generalization of IRL.

Counter-adversarial applications involve \textit{inference} by both the defender and attacker, wherein they estimate the posterior distributions of an underlying state that cannot be directly observed but is inferred through an observation process conditioned on that state. Posterior distributions provide not only \textit{point estimates}, i.e., single-number estimates such as the mean of the posterior distributions, but also quantify their uncertainty. In sequential state estimation, states are inferred from a sequence of observations, with posteriors updated recursively. When state-space models are available, Bayesian filtering is a probabilistic technique for sequential state estimation that has been extensively applied to visual tracking\cite{zhang2017multi}, localization in robotics \cite{ullah2019localization}, and other signal processing problems \cite{yousefi2019efficient,liu2019audio}.
 
In inverse cognition, the attacker employs a (forward) Bayesian filter to infer the kinematic state of the defender, which the former then uses to cognitively adapt its actions. In order to predict the attacker's future actions, a defender assesses the attacker's inference using an \textit{inverse Bayesian filter} \cite{krishnamurthy2019how} that estimates the posterior distribution of the forward filter given noisy measurements of the attacker's actions. In this context, Kalman filter (KF) is a well-known Bayesian filter for linear Gaussian systems, providing optimal minimum mean-squared error (MMSE) estimates. However, posterior computation becomes intractable for general non-linear and non-Gaussian systems. To address this, approximate approaches like the extended KF (EKF) \cite{anderson2012optimal} and unscented KF (UKF) \cite{julier2004unscented} have been proposed. EKF and UKF assume Gaussian state posteriors and are only applicable to systems with Gaussian noises \cite{li2017approximate}. While they provide closed-form analytic solutions, their appropriateness varies by application, often failing with highly non-linear models or multi-modal posteriors \cite{kotecha2003gaussian,cappe2007overview}. Techniques like Gaussian mixture (GM) \cite{alspach1972nonlinear} or grid-based \cite{kramer1988recursive} filters, proposed to mitigate these limitations, become computationally expensive in high-dimensional systems. The curse of dimensionality also affects deterministic numerical integration methods like cubature KF (CKF) \cite{arasaratnam2009cubature} and quadrature KF (QKF) \cite{ito2000gaussian,arasaratnam2007qkf}, with the approximation error's convergence rate decreasing as state dimension increases \cite{crisan2002survey}.

In practice, counter-adversarial applications are often non-linear and non-Gaussian, limiting the applicability of Gaussian approximations in EKF/UKF. Non-linear filtering methods that do not rely on this assumption use sequential importance sampling (SIS) and resampling \cite{gordon1993novel}, leading to particle filtering (PF)\cite{cappe2007overview}. PF methods employ random sampling to achieve asymptotically exact integral computation, though with higher computational complexity\cite{ristic2003beyond,arulampalam2002tutorial}. Whereas EKF and UKF locally approximate posterior distributions at state estimates \cite{li2017approximate}, PFs use a global approach. They start with a set of sample points representing the initial state distribution and propagate them through the actual nonlinear dynamics, with the ensemble of these samples providing an approximate posterior. Consequently, PFs are suitable for non-Gaussian dynamics. Additionally, quasi-Monte Carlo (MC) methods \cite{guo2006quasi,avron2016quasi} are deterministic alternatives to MC methods, using regularly distributed points rather than random ones to approximate the posterior. In this paper, we develop inverse filters using the PF framework for estimating the attacker's inference in highly non-linear, non-Gaussian counter-adversarial systems.

\vspace{-11pt}
\subsection{Prior Art}
Our work is closely connected to a rich tradition of research in the development of PFs, resulting in a vast literature. However,  as detailed in the sequel, almost all previous studies have concentrated on forward filters. Following \cite{gordon1993novel}'s bootstrap PF formulation, several variants of PFs have been developed for enhanced performance. Auxiliary PFs \cite{pitt1999filtering} direct particles to higher-density regions of the posterior distribution, whereas unscented PFs \cite{van2000unscented_pf} include the current observation into the proposal distribution using UKF. In \cite{murphy2001rao}, Rao-Blackwellised PF is proposed for systems wherein the state can be partitioned such that the posterior distribution of one part is tractably conditioned on the other. The PF framework has also been used to implement Bernoulli filters \cite{ristic2013tutorial} (for randomly switching systems), possibility PFs \cite{ristic2019robust} (for mismatched models), and probability hypothesis density (PHD) filters \cite{mahler2003multitarget} (for high-dimensional multi-object Bayesian inference). 

In the context of inverse stochastic filtering, \cite{mattila2020hmm} examined finite state-space models and proposed an \textit{inverse hidden Markov model} to estimate the adversary's observations and observation likelihood. The \textit{inverse KF} (I-KF) in \cite{krishnamurthy2019how} estimates the defender's state based on a forward KF's estimation. For non-linear counter-adversarial systems, we recently developed \textit{inverse extended KF} (I-EKF) and \textit{inverse unscented KF} (I-UKF) in \cite{singh2022inverse,singh2022inverse_part1} and  \cite{singh2023counter,singh2023inverse_ukf}, respectively. In the case of I-EKF, the adversary employs a forward EKF, which utilizes Taylor series expansion of the non-linear dynamics. As a result, EKF necessitates Jacobian computation, is susceptible to initialization/modeling errors, and performs poorly when significant non-linearities are present \cite{li2017approximate}. We addressed some of these shortcomings in the context of inverse cognition using advanced variants of I-EKF \cite{singh2022inverse_part2}. I-UKF, on the other hand, is an alternative derivative-free technique for efficiently dealing with nonlinear systems. Based on the unscented transform, UKF \cite{julier2004unscented} uses a weighted sum of function evaluations at a finite number of deterministic sigma points and approximates the posterior distribution of a random variable under non-linear transformation. CKF \cite{arasaratnam2009cubature} and QKF \cite{ito2000gaussian,arasaratnam2007qkf} are further examples of derivative-free nonlinear filters that use efficient numerical integration techniques to compute the Bayesian recursive integrals. These formulations for inverse CKF and QKF were proposed and studied recently in \cite{singh2023inverse_ckf_qkf}. However, the inverses of PFs have remained unexamined in prior works.

\vspace{-11pt}
\subsection{Our Contributions}\label{subsec:contribution}
Our main contributions in this paper are as follows:\\
\textbf{1) Inverse PF.} Gaussian inverse filters such as I-EKF \cite{singh2022inverse_part1} and I-UKF \cite{singh2023inverse_ukf} are not applicable to general non-Gaussian systems. To address this, we develop inverse PF (I-PF). At the $k$-th time instant, our I-PF considers the joint conditional distribution of the attacker's current state estimate and observation given the defender's knowledge of its own true states and observations of the attacker's actions up to the current instant. Initially, we assume perfect system model information, including a general but known forward filter at the defender's end. Our I-PF seeks to empirically approximate the optimal inverse filter's (joint) posterior and samples the particles from the optimal importance sampling density. This is in contrast to the typical PF, where the optimal density is often unavailable. The known forward filter assumption allows sampling from the optimal density in I-PF; see also Remark~\ref{remark:IPF optimal density}.\\
\textbf{2) Convergence of I-PF.} In PFs, the particles interact and are not statistically independent, rendering classical convergence results for Monte Carlo methods, which rely on central limit theorems under i.i.d. assumptions, inapplicable. Despite this, it is essential to study PFs' convergence to the true posterior as they approximate optimal filters. In this work, we examine the convergence of our proposed inverse particle filter (I-PF) in the $L^{4}$-sense. Specifically, we demonstrate that our I-PF's estimates converge to the optimal inverse filter's estimates for bounded observation densities, given that the estimated function grows at a slower rate than the defender's observation density. Moreover, convergence in the $L^{4}$-sense also implies almost sure convergence of our I-PF.\\
\textbf{3) Differentiable I-PF.} The applications of the aforementioned inverse filters, including prior works \cite{krishnamurthy2019how,singh2022inverse_part1,singh2023inverse_ukf} are limited to cases when perfect system information is available. However, in practical counter-adversarial systems, the forward filter and the strategy adopted by the attacker to adapt its actions may not be known to the defender. Similarly, the attacker may not have complete information about the defender's state evolution. We address this case of unknown dynamics by proposing a differentiable I-PF that leverages learning networks to learn both the model and I-PF's proposal distribution.\\
\textbf{4) Recursive lower error bounds.} We consider a widely used one-dimensional non-linear system\cite{arulampalam2002tutorial,hu2008basic}, a bearing-only tracking system\cite{bar2004estimation,lin2002comparison} and a non-Gaussian system with non-stationary observations\cite{van2000unscented_pf} to demonstrate the estimation performance and time complexity of our I-PF in comparison to the I-EKF\cite{singh2022inverse_part1} and I-UKF\cite{singh2023inverse_ukf}. We evaluate the estimation performance using the recursive Cram\'{e}r-Rao lower bound (RCRLB)\cite{tichavsky1998posterior} and the non-credibility index (NCI)\cite{li2001practical} as key performance metrics. Our numerical experiments indicate that the proposed I-PF provides better estimates, especially when assuming a forward filter different from the attacker's actual forward filter, and is more credible than the I-EKF. 

The rest of the paper is organized as follows. The next section describes the system model and develops the optimal inverse filter recursions. In Section~\ref{sec:IPF}, we develop I-PF while its convergence results are provided in Section~\ref{sec:ipf convergence}. Section~\ref{sec:unknown} presents the differentiable I-PF for the unknown system case. We discuss numerical experiments for the proposed filter's performance in Section~\ref{sec:simulation}, before concluding in Section~\ref{sec:summary}.

Throughout the paper, we reserve boldface lowercase and uppercase letters for vectors (column vectors) and matrices, respectively, and $\lbrace a_{i}\rbrace_{i_{1}\leq i\leq i_{2}}$ denotes a set of elements indexed by an integer $i$. The notation $[\mathbf{a}]_{i}$ is used to denote the $i$-th component of vector $\mathbf{a}$. The transpose operation is $(\cdot)^{T}$, expectation operation is $\mathbb{E}[\cdot]$ and the $l_{2}$ norm of a vector is $\|\cdot\|$. The notation $\|f\|_{\infty}$ denotes the supremum norm of the real-valued function $f(\cdot)$. Also, $\mathbf{I}_{n}$ and $\mathbf{0}$ denote a `$n\times n$' identity matrix and an all-zero matrix, respectively. The function $\delta(x-x_{0})$ is the Dirac-delta function in variable $x$ centered at $x_{0}$. The Gaussian distribution is represented as $\mathcal{N}(\mathbf{x};\bm{\mu},\mathbf{Q})$ with mean $\bm{\mu}$ and covariance matrix $\mathbf{Q}$ while $\mathcal{U}[a,b]$ represents a uniform distribution over interval $[a,b]$. We use shorthand $\mathbb{P}(\mathbf{X}\in d\mathbf{x})$ to refer to the probability of random variable $\mathbf{X}\in [\mathbf{x},\mathbf{x}+d\mathbf{x}]$, where $d\mathbf{x}$ is an infinitesimal interval length.

\vspace{-8pt}
\section{Optimal Inverse Filter}
\label{sec:optimal filter}
Before considering the approximate PF framework in the subsequent section, we derive the optimal Bayesian recursions for inverse filtering. We consider a general probabilistic framework for the attacker-defender dynamics in Section~\ref{subsec:system model}. The optimal inverse filter recursions are provided in Section~\ref{subsec:optimal recursions}.

\vspace{-8pt}
\subsection{System model}
\label{subsec:system model}
Consider the `$n_{x}$'-dimensional stochastic process $\mathbf{X}=\{\mathbf{X}_{k}\}_{k\geq 0}$ as the defender's state evolution process. Following the inverse cognition framework of \cite{mattila2020hmm,krishnamurthy2019how,singh2022inverse_part1}, the defender perfectly knows its state $\mathbf{x}_{k}\in\mathbb{R}^{n_{x}\times 1}$ for all $k\geq 0$. For instance, an intelligent target is aware of its own position and velocity at all times. The process $\mathbf{X}$ is a Markov process with initial state $\mathbf{X}_{0}\sim\pi^{x}_{0}(d\mathbf{x}_{0})$ and evolves as
\par\noindent\small
\begin{align}
    \mathbb{P}(\mathbf{X}_{k+1}\in d\mathbf{x}_{k+1}|\mathbf{X}_{k}=\mathbf{x}_{k})=\mathcal{K}(\mathbf{x}_{k+1}|\mathbf{x}_{k})d\mathbf{x}_{k+1},\label{eqn:state x}
\end{align}
\normalsize
where $\mathcal{K}(\cdot)$ denotes the transition kernel density (with respect to a Lebesgue measure). The attacker observes the defender's state as a `$n_{y}$'-dimensional observation process $\mathbf{Y}=\{\mathbf{Y}_{k}\}_{k\geq 1}$. The observations $\mathbf{Y}$ are conditionally independent given $\mathbf{X}$ with
\par\noindent\small
\begin{align}
    \mathbb{P}(\mathbf{Y}_{k}\in d\mathbf{y}_{k}|\mathbf{X}_{k}=\mathbf{x}_{k})=\rho(\mathbf{y}_{k}|\mathbf{x}_{k})d\mathbf{y}_{k},\label{eqn:observation y}
\end{align}
\normalsize
where $\rho(\cdot)$ is the attacker's conditional observation density and $k$-th observation $\mathbf{y}_{k}\in\mathbb{R}^{n_{y}\times 1}$.

The attacker computes an estimate $\hat{\mathbf{x}}_{k}$ of the defender's state $\mathbf{x}_{k}$ given the available observations $\{\mathbf{y}_{j}\}_{1\leq j\leq k}$ using the forward filter. Consider $\hat{\mathbf{X}}=\{\hat{\mathbf{X}}_{k}\}_{k\geq 0}$ as the attacker's state estimation process. The forward filter recursively computes the current estimate $\hat{\mathbf{x}}_{k}$ from the previous estimate $\hat{\mathbf{x}}_{k-1}$ and current observation $\mathbf{y}_{k}$ in a deterministic manner as
\par\noindent\small
\begin{align}
   \hat{\mathbf{x}}_{k}=T(\hat{\mathbf{x}}_{k-1},\mathbf{y}_{k}).\label{eqn:filter T}
\end{align}
\normalsize
For instance, $T(\cdot)$ represents the standard EKF/UKF recursive update if the attacker employs a forward EKF/UKF to compute state estimate $\hat{\mathbf{x}}_{k}$. Note that $T(\cdot)$ can be a time-dependent function for many forward filters. In the case of EKF/UKF, the mapping $T(\cdot)$ at the $k$-th time instant depends on the covariance matrix estimate computed at the previous $(k-1)$-th time instant. For the sake of brevity, we simply denote the forward filter recursion as in \eqref{eqn:filter T} but implement the appropriate function for the given time instant. The attacker then uses the estimate $\hat{\mathbf{x}}_{k}$ to administer an action which the defender observes as a `$n_{a}$'-dimensional noisy observation process $\mathbf{A}=\{\mathbf{A}_{k}\}_{k\geq 1}$. Given $\hat{\mathbf{X}}$, the observations $\mathbf{A}$ are conditionally independent and
\par\noindent\small
\begin{align}
    \mathbb{P}(\mathbf{A}_{k}\in d\mathbf{a}_{k}|\hat{\mathbf{X}}_{k}=\hat{\mathbf{x}}_{k})=\beta(\mathbf{a}_{k}|\hat{\mathbf{x}}_{k})d\mathbf{a}_{k},\label{eqn:observation a}
\end{align}
\normalsize
where $\beta(\cdot)$ is the defender's observation density and the $k$-th observation $\mathbf{a}_{k}\in\mathbb{R}^{n_{a}\times 1}$. Finally, the defender uses $\{\mathbf{a}_{j},\mathbf{x}_{j}\}_{1\leq j\leq k}$ to compute the estimate $\doublehat{\mathbf{x}}_{k}$ of $\hat{\mathbf{x}}_{k}$ in the inverse filter. Fig.~\ref{fig:system model} graphically illustrates the system dynamics.
\begin{figure}
  \centering
  \includegraphics[width = 1.0\columnwidth]{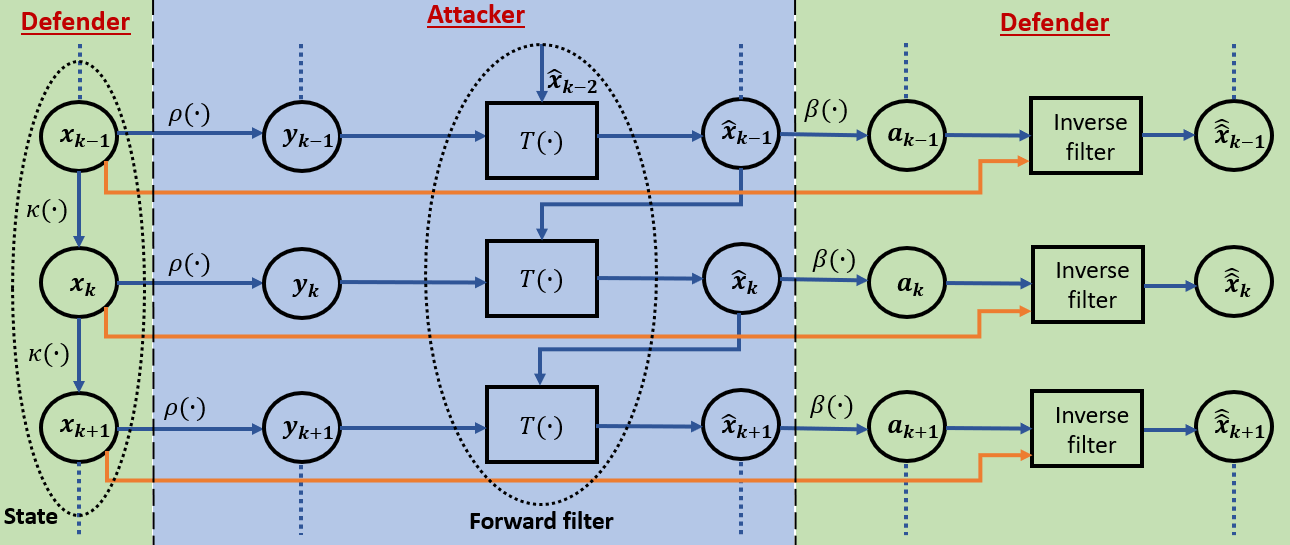}
  \caption{Graphical representation of the forward and inverse filters' recursions.}
 \label{fig:system model}
\end{figure}

In Section~\ref{sec:IPF}, we assume that both defender and attacker have perfect knowledge of the system model, i.e., the densities $\mathcal{K}(\cdot)$, $\rho(\cdot)$ and $\beta(\cdot)$. Additionally, the defender assumes a known forward filter $T(\cdot)$ employed by the attacker. We address the unknown system dynamics case in Section~\ref{sec:unknown} wherein we estimate both the state and the model parameters. Furthermore, our numerical experiments in Section~\ref{sec:simulation} show that the developed I-PF provides reasonably accurate estimates even when assuming a simple forward EKF, regardless of the attacker's actual forward filter.

\textit{Example:} Consider the special case of additive system noises such that the state evolution and observations are modeled as
\par\noindent\small
\begin{align}
    &\mathbf{x}_{k+1}=f(\mathbf{x}_{k})+\mathbf{w}_{k},\label{eqn:Gaussian state transition}\\
    &\mathbf{y}_{k}=h(\mathbf{x}_{k})+\mathbf{v}_{k},\label{eqn:Gaussian observation y}\\
    &\mathbf{a}_{k}=g(\hat{\mathbf{x}}_{k})+\bm{\epsilon}_{k}.\label{eqn:Gaussian observation a}
\end{align}
\normalsize
Here, $f(\cdot)$, $h(\cdot)$ and $g(\cdot)$ represent general non-linear functions while $\{\mathbf{w}_{k}\}$, $\{\mathbf{v}_{k}\}$ and $\{\bm{\epsilon}_{k}\}$ are mutually independent noise terms. If the probability density functions of $\mathbf{w}_{k}$, $\mathbf{v}_{k}$ and $\bm{\epsilon}_{k}$ are denoted by $p_{w}(\cdot)$, $p_{v}(\cdot)$ and $p_{\epsilon}(\cdot)$, respectively, then $\mathcal{K}(\mathbf{x}_{k+1}|\mathbf{x}_{k})=p_{w}(\mathbf{x}_{k+1}-f(\mathbf{x}_{k}))$, $\rho(\mathbf{y}_{k}|\mathbf{x}_{k})=p_{v}(\mathbf{y}_{k}-h(\mathbf{x}_{k}))$, and $\beta(\mathbf{a}_{k}|\hat{\mathbf{x}}_{k})=p_{\epsilon}(\mathbf{a}_{k}-g(\hat{\mathbf{x}}_{k}))$.

\vspace{-8pt}
\subsection{Optimal Inverse filter}
\label{subsec:optimal recursions}
Consider the (joint) conditional distribution of $(\hat{\mathbf{x}}_{k},\mathbf{y}_{k})$ given the defender's knowledge of its own true states and observations of the attacker's actions at the $k$-th time instant. As in forward Bayesian filter, the optimal inverse filter computes this conditional distribution recursively using the time and measurement updates. Define the conditional distributions $\pi_{k|k-1}(d\hat{\mathbf{x}}_{k},d\mathbf{y}_{k})$ and $\pi_{k|k}(d\hat{\mathbf{x}}_{k},d\mathbf{y}_{k})$ as
\par\noindent\small
\begin{align}
&\pi_{k|k-1}(d\hat{\mathbf{x}}_{k},d\mathbf{y}_{k})\doteq p(\hat{\mathbf{x}}_{k},\mathbf{y}_{k}|\mathbf{x}_{0:k},\mathbf{a}_{1:k-1})d\hat{\mathbf{x}}_{k}d\mathbf{y}_{k},\label{eqn:pi predict}\\
&\pi_{k|k}(d\hat{\mathbf{x}}_{k},d\mathbf{y}_{k})\doteq p(\hat{\mathbf{x}}_{k},\mathbf{y}_{k}|\mathbf{x}_{0:k},\mathbf{a}_{1:k})d\hat{\mathbf{x}}_{k}d\mathbf{y}_{k}.\label{eqn:pi update}
\end{align}
\normalsize
In the time-update step, we obtain $\pi_{k|k-1}$ from $\pi_{k-1|k-1}$ considering the true states $\mathbf{x}_{0:k}$ but observations $\mathbf{a}_{1:k-1}$ excluding the current observation $\mathbf{a}_{k}$. Finally, $\pi_{k|k-1}$ is updated using the current observation $\mathbf{a}_{k}$ in the measurement update step to obtain $\pi_{k|k}$. The defender's MMSE estimate $\doublehat{\mathbf{x}}_{k}$ is then $\doublehat{\mathbf{x}}_{k}=\iint\hat{\mathbf{x}}_{k}\pi_{k|k}(d\hat{\mathbf{x}}_{k},d\mathbf{y}_{k})$.

First, we consider the optimal filter's time update. We have
\par\noindent\small
\begin{align}
&p(\hat{\mathbf{x}}_{k},\mathbf{y}_{k}|\mathbf{x}_{0:k},\mathbf{a}_{1:k-1})=\nonumber\\
&\int p(\hat{\mathbf{x}}_{k},\mathbf{y}_{k}|\mathbf{x}_{0:k},\mathbf{a}_{1:k-1},\hat{\mathbf{x}}_{k-1})p(\hat{\mathbf{x}}_{k-1}|\mathbf{x}_{0:k},\mathbf{a}_{1:k-1})d\hat{\mathbf{x}}_{k-1}.\label{eqn:time update start}
\end{align}
\normalsize
However, $p(\hat{\mathbf{x}}_{k},\mathbf{y}_{k}|\mathbf{x}_{0:k},\mathbf{a}_{1:k-1},\hat{\mathbf{x}}_{k-1})=p(\hat{\mathbf{x}}_{k}|\mathbf{y}_{k},\mathbf{x}_{0:k},\mathbf{a}_{1:k-1},\hat{\mathbf{x}}_{k-1})p(\mathbf{y}_{k}|\mathbf{x}_{0:k},\mathbf{a}_{1:k-1},\hat{\mathbf{x}}_{k-1})=p(\hat{\mathbf{x}}_{k}|\mathbf{y}_{k},\hat{\mathbf{x}}_{k-1})p(\mathbf{y}_{k}|\mathbf{x}_{k}),$ because observation $\mathbf{y}_{k}$ is conditionally independent given state $\mathbf{x}_{k}$ with $p(\mathbf{y}_{k}|\mathbf{x}_{k})$ given by \eqref{eqn:observation y}. Also, the estimate $\hat{\mathbf{x}}_{k}$ is independent of $\{\mathbf{x}_{0:k},\mathbf{a}_{1:k-1}\}$ given $\{\mathbf{y}_{k},\hat{\mathbf{x}}_{k-1}\}$. In particular, $\hat{\mathbf{x}}_{k}$ is a deterministic function of $\mathbf{y}_{k}$ and $\hat{\mathbf{x}}_{k-1}$ such that $p(\hat{\mathbf{x}}_{k}|\mathbf{y}_{k},\hat{\mathbf{x}}_{k-1})=\delta(\hat{\mathbf{x}}_{k}-T(\hat{\mathbf{x}}_{k-1},\mathbf{y}_{k}))$. Hence, \eqref{eqn:time update start} yields
\par\noindent\small
\begin{align}
&p(\hat{\mathbf{x}}_{k},\mathbf{y}_{k}|\mathbf{x}_{0:k},\mathbf{a}_{1:k-1})=\int\delta(\hat{\mathbf{x}}_{k}-T(\hat{\mathbf{x}}_{k-1},\mathbf{y}_{k}))\nonumber\\
&\;\;\;\times\rho(\mathbf{y}_{k}|\mathbf{x}_{k})p(\hat{\mathbf{x}}_{k-1}|\mathbf{x}_{0:k},\mathbf{a}_{1:k-1})d\hat{\mathbf{x}}_{k-1}.\label{eqn:time update inter}
\end{align}
\normalsize
Now, $p(\hat{\mathbf{x}}_{k-1},\mathbf{y}_{k-1}|\mathbf{x}_{0:k},\mathbf{a}_{1:k-1})=p(\hat{\mathbf{x}}_{k-1},\mathbf{y}_{k-1}|\mathbf{x}_{0:k-1},\mathbf{a}_{1:k-1})$ because $\{\hat{\mathbf{x}}_{k-1},\mathbf{y}_{k-1}\}$ do not depend on future state $\mathbf{x}_{k}$ given $\mathbf{x}_{0:k-1}$ and $\mathbf{a}_{1:k-1}$. Hence, using this and \eqref{eqn:pi update}, we have the marginal distribution $\mathbb{P}(\hat{\mathbf{x}}_{k-1}\in d\hat{\mathbf{x}}_{k-1}|\mathbf{x}_{0:k},\mathbf{a}_{1:k-1})=\int\pi_{k-1|k-1}(d\hat{\mathbf{x}}_{k-1},d\mathbf{y}_{k-1})$. Substituting in \eqref{eqn:time update inter}, the optimal time update becomes
\par\noindent\small
\begin{align}
&p(\hat{\mathbf{x}}_{k},\mathbf{y}_{k}|\mathbf{x}_{0:k},\mathbf{a}_{1:k-1})=\iint\delta(\hat{\mathbf{x}}_{k}-T(\hat{\mathbf{x}}_{k-1},\mathbf{y}_{k}))\rho(\mathbf{y}_{k}|\mathbf{x}_{k})\nonumber\\
&\;\;\;\times\pi_{k-1|k-1}(d\hat{\mathbf{x}}_{k-1},d\mathbf{y}_{k-1}).\label{eqn:time update}
\end{align}
\normalsize

Now, consider the measurement update with current observation $\mathbf{a}_{k}$. The joint conditional distribution $p(\hat{\mathbf{x}}_{k},\mathbf{y}_{k},\mathbf{a}_{k}|\mathbf{x}_{0:k},\mathbf{a}_{1:k-1})=\beta(\mathbf{a}_{k}|\hat{\mathbf{x}}_{k})\pi_{k|k-1}(d\hat{\mathbf{x}}_{k},d\mathbf{y}_{k})$ because observation $\mathbf{a}_{k}$ is conditionally independent of everything else given $\hat{\mathbf{x}}_{k}$ with $p(\mathbf{a}_{k}|\hat{\mathbf{x}}_{k})$ given by \eqref{eqn:observation a}. Hence, using Bayes' theorem, we have
\par\noindent\small
\begin{align}
\pi_{k|k}(d\hat{\mathbf{x}}_{k},d\mathbf{y}_{k})=\frac{\beta(\mathbf{a}_{k}|\hat{\mathbf{x}}_{k})\pi_{k|k-1}(d\hat{\mathbf{x}}_{k},d\mathbf{y}_{k})}{\iint\beta(\mathbf{a}_{k}|\hat{\mathbf{x}}_{k})\pi_{k|k-1}(d\hat{\mathbf{x}}_{k},d\mathbf{y}_{k})},\label{eqn:measurement update}
\end{align}
\normalsize
which is the optimal measurement update. Table~\ref{tbl:notations} summarizes the variables and distributions defined in this section.
    \begin{table}
    \caption{Key variables and distributions.}
    \label{tbl:notations}
    \centering
    \begin{tabular}{p{2.2cm}p{6.0cm}}
    \hline\noalign{\smallskip}
    Notation & Description\\
    \noalign{\smallskip}
    \hline
    \noalign{\smallskip}
    $\mathbf{x}_{k}$ & Defender's state at $k$-th time instant ($\in\mathbb{R}^{n_{x}\times 1}$)\\
    $\mathbf{y}_{k}$ & Attacker's observation of $\mathbf{x}_{k}$ at $k$-th time instant ($\in\mathbb{R}^{n_{y}\times 1}$)\\
    $\mathbf{a}_{k}$ & Defender's observation of attacker's action at $k$-th time instant ($\in\mathbb{R}^{n_{a}\times 1}$)\\
    $\hat{\mathbf{x}}_{k}$ & Attacker's estimate of $\mathbf{x}_{k}$ computed via forward filter\\
    $\doublehat{\mathbf{x}}_{k}$ & Defender's estimate of $\hat{\mathbf{x}}_{k}$ computed via inverse filter\\
    $\pi^{x}_{0}(d\mathbf{x}_{0})$ & Defender's initial state distribution\\
    $\mathcal{K}(\cdot)$ & Transitional kernel density for defender's state evolution\\
    $\rho(\cdot)$ & Attacker's conditional observation density\\
    $T(\cdot,\cdot)$ & Forward filter's recursive update\\
    $\beta(\cdot)$ & Defender's conditional observation density\\
    $\pi_{k|k-1}(d\hat{\mathbf{x}}_{k},d\mathbf{y}_{k})$ & Optimal inverse filter's prediction distribution, i.e., joint conditional density of $(\hat{\mathbf{x}}_{k},\mathbf{y}_{k})$ given true states $\mathbf{x}_{0:k}$ (upto time $k$) and observations $\mathbf{a}_{1:k-1}$ (upto time $k-1$)\\
    $\pi_{k|k}(d\hat{\mathbf{x}}_{k},d\mathbf{y}_{k})$ & Optimal inverse filter's posterior distribution, i.e., joint conditional density of $(\hat{\mathbf{x}}_{k},\mathbf{y}_{k})$ given true states $\mathbf{x}_{0:k}$ (upto time $k$) and observations $\mathbf{a}_{1:k}$ (upto time $k$)\\
    \noalign{\smallskip}
    \hline\noalign{\smallskip}
    \end{tabular}
    \end{table}

In general, the defender estimates a function $\phi(\hat{\mathbf{x}},\mathbf{y})$ as $\mathbb{E}[\phi(\hat{\mathbf{x}},\mathbf{y})|\mathbf{x}_{0:k},\mathbf{a}_{1:k}]$ using the inverse filter's posterior distribution. For instance, considering $\phi(\hat{\mathbf{x}},\mathbf{y})=\hat{\mathbf{x}}$ yields the defender's MMSE estimate $\doublehat{\mathbf{x}}_{k}$ of $\hat{\mathbf{x}}_{k}$. For the sake of the convergence analysis in Section~\ref{sec:ipf convergence}, we now introduce some simplified notations for the integrals involved in the optimal filter recursions. Given a measure $\nu$, a function $\phi$ and a Markov transition kernel $\mathcal{K}$, we define
\par\noindent\small
\begin{align*}
\langle\nu,\phi\rangle\doteq\int\phi(x)\nu(dx),\;\;\;\mathcal{K}\phi(x)\doteq\int\phi(z)\mathcal{K}(dz|x).
\end{align*}
\normalsize
With this notation, the optimal filter recursions \eqref{eqn:time update} and \eqref{eqn:measurement update} can be expressed as
\par\noindent\small
\begin{align}
\langle\pi_{k|k-1},\phi\rangle&=\langle\pi_{k-1|k-1},\delta_{T}\rho\phi\rangle,\label{eqn:optimal filter time update}\\
\langle\pi_{k|k},\phi\rangle&=\frac{\langle\pi_{k|k-1},\beta\phi\rangle}{\langle\pi_{k|k-1},\beta\rangle},\label{eqn:optimal filter measurement update}
\end{align}
\normalsize
where $\delta_{T}$ denotes function $\delta(\hat{\mathbf{x}}_{k}-T(\hat{\mathbf{x}}_{k-1},\mathbf{y}_{k}))$. For brevity, we drop the time parameter $k$ in our notation $\delta_{T}$, but while implementing, it is taken into consideration. Note that the optimal inverse filter exists only if $\langle\pi_{k|k-1},\beta\rangle>0$.

\vspace{-8pt}
\section{Inverse PF}\label{sec:IPF}
In PF, the posterior is approximated empirically using a weighted particle set, evolving randomly in time according to the system dynamics. Such an approximation simplifies the computation of integrals to finite sums. The particles are sampled from a suitable importance density and resampled to avoid particle degeneracy \cite{ristic2003beyond,arulampalam2002tutorial}. This SIS algorithm is also known as bootstrap filtering \cite{gordon1993novel}, condensation algorithm \cite{maccormick2000probabilistic}, and interacting particle approximations \cite{del1998measure}. In the following, we develop I-PF recursions applying SIS and resampling methods for the defender-attacker dynamics, analogous to the standard PF. For further details on SIS-based filtering, we refer the readers to  \cite{arulampalam2002tutorial} and \cite{chen2003bayesian}.

\vspace{-8pt}
\subsection{I-PF formulation}\label{subsec:ipf formulation}
Consider $N$ as the total number of particles. As noted earlier in Section~\ref{subsec:optimal recursions}, in I-PF, we approximate the (joint) posterior distribution of $(\hat{\mathbf{x}}_{k},\mathbf{y}_{k})$ at $k$-th time instant as
\par\noindent\small
\begin{align*}
p(\hat{\mathbf{x}}_{k},\mathbf{y}_{k}|\mathbf{x}_{0:k},\mathbf{a}_{1:k})\approx\sum_{i=1}^{N}\omega^{i}_{k}\delta(\hat{\mathbf{x}}_{k}-\hat{\mathbf{x}}^{i}_{k},\mathbf{y}_{k}-\mathbf{y}^{i}_{k}),
\end{align*}
\normalsize
where $\{\hat{\mathbf{x}}^{i}_{k},\mathbf{y}^{i}_{k}\}_{1\leq i\leq N}$ are the current particles with associated importance weights $\{\omega^{i}_{k}\}_{1\leq i\leq N}$ and the defender knows $\mathbf{x}_{0:k}$ and $\mathbf{a}_{1:k}$. To this end, we first consider the joint conditional density $p(\hat{\mathbf{x}}_{0:k},\mathbf{y}_{1:k}|\mathbf{x}_{0:k},\mathbf{a}_{1:k})$, which we approximate using particles $\{\hat{\mathbf{x}}^{i}_{0:k},\mathbf{y}^{i}_{1:k}\}_{1\leq i\leq N}$. The particle $(\hat{\mathbf{x}}^{i}_{k},\mathbf{y}^{i}_{k})$ for approximating the marginal distribution $p(\hat{\mathbf{x}}_{k},\mathbf{y}_{k}|\mathbf{x}_{0:k},\mathbf{a}_{1:k})$ is then simply the sub-vector corresponding to $(\hat{\mathbf{x}}_{k},\mathbf{y}_{k})$ in the $i$-th particle $(\hat{\mathbf{x}}^{i}_{0:k},\mathbf{y}^{i}_{1:k})$. Furthermore, as we will show eventually, our I-PF algorithm only requires storing particles $\{\hat{\mathbf{x}}^{i}_{k}\}$, i.e., the previous particles $\{\hat{\mathbf{x}}^{i}_{0:k-1},\mathbf{y}^{i}_{1:k-1}\}$ and the current observation particles $\{\mathbf{y}^{i}_{k}\}$ are discarded. Denote $q(\cdot)$ as the chosen importance sampling density. Based on importance sampling\cite{ristic2003beyond}, the weights are computed as
\par\noindent\small
\begin{align}
\omega^{i}_{k}=\frac{p(\hat{\mathbf{x}}^{i}_{0:k},\mathbf{y}^{i}_{1:k}|\mathbf{x}_{0:k},\mathbf{a}_{1:k})}{q(\hat{\mathbf{x}}^{i}_{0:k},\mathbf{y}^{i}_{1:k}|\mathbf{x}_{0:k},\mathbf{a}_{1:k})},\label{eqn:IPF weight start}
\end{align}
\normalsize
for $i=1,2,\hdots,N$. In our inverse filtering problem, the joint density simplifies to
\par\noindent\small
\begin{align}
&p(\hat{\mathbf{x}}_{0:k},\mathbf{y}_{1:k}|\mathbf{x}_{0:k},\mathbf{a}_{1:k})\propto p(\mathbf{a}_{k}|\hat{\mathbf{x}}_{k})p(\hat{\mathbf{x}}_{k}|\hat{\mathbf{x}}_{k-1},\mathbf{y}_{k})p(\mathbf{y}_{k}|\mathbf{x}_{k})\nonumber\\
&\;\;\times p(\hat{\mathbf{x}}_{0:k-1},\mathbf{y}_{1:k-1}|\mathbf{x}_{0:k-1},\mathbf{a}_{1:k-1}).\label{eqn:ipf p density}
\end{align}
\normalsize
Furthermore, analogous to the standard PF \cite{ristic2003beyond}, we choose a sampling density $q(\cdot)$ that factorizes as
\par\noindent\small
\begin{align}
&q(\hat{\mathbf{x}}_{0:k},\mathbf{y}_{1:k}|\mathbf{x}_{0:k},\mathbf{a}_{1:k})=q(\hat{\mathbf{x}}_{k},\mathbf{y}_{k}|\hat{\mathbf{x}}_{k-1},\mathbf{y}_{k-1},\mathbf{x}_{k},\mathbf{a}_{k})\nonumber\\
&\;\;\;\times q(\hat{\mathbf{x}}_{0:k-1},\mathbf{y}_{1:k-1}|\mathbf{x}_{0:k-1},\mathbf{a}_{1:k-1}).\label{eqn:ipf q density}
\end{align}
\normalsize
We provide the detailed steps to obtain \eqref{eqn:ipf p density} and \eqref{eqn:ipf q density} in Appendix~\ref{app:sampling}. Substituting \eqref{eqn:ipf p density} and \eqref{eqn:ipf q density} in \eqref{eqn:IPF weight start}, the optimal weights simplifies to
\par\noindent\small
\begin{align}
\omega^{i}_{k}\propto\omega^{i}_{k-1}\frac{p(\mathbf{a}_{k}|\hat{\mathbf{x}}^{i}_{k})p(\hat{\mathbf{x}}^{i}_{k}|\hat{\mathbf{x}}^{i}_{k-1},\mathbf{y}^{i}_{k})p(\mathbf{y}^{i}_{k}|\mathbf{x}_{k})}{q(\hat{\mathbf{x}}^{i}_{k},\mathbf{y}^{i}_{k}|\hat{\mathbf{x}}^{i}_{k-1},\mathbf{y}^{i}_{k-1},\mathbf{x}_{k},\mathbf{a}_{k})},\label{eqn:IPF weight 2}
\end{align}
\normalsize
However, for this choice of importance density, the variance of weights can only increase over time resulting in particle degeneracy \cite{doucet2000sequential}, i.e., with time, all but one particle will have negligible weights. To this end, resampling is employed to eliminate particles with small weights and multiply the ones with large weights. The optimal $q(\cdot)$ that minimizes the variance of weights \eqref{eqn:IPF weight 2} can be obtained as \cite{doucet2000sequential}
\par\noindent\small
\begin{align}
q^{*}(\hat{\mathbf{x}}_{k},\mathbf{y}_{k}|\hat{\mathbf{x}}_{k-1},\mathbf{y}_{k-1},\mathbf{x}_{k},\mathbf{a}_{k})&=p(\hat{\mathbf{x}}_{k},\mathbf{y}_{k}|\hat{\mathbf{x}}_{k-1},\mathbf{y}_{k-1},\mathbf{x}_{k},\mathbf{a}_{k})\nonumber\\
&\approx p(\hat{\mathbf{x}}_{k}|\hat{\mathbf{x}}_{k-1},\mathbf{y}_{k})p(\mathbf{y}_{k}|\mathbf{x}_{k}),\label{eqn:optimal density}
\end{align}
\normalsize
where we have ignored the correlation between attacker's observation $\mathbf{y}_{k}$ and defender's observation $\mathbf{a}_{k}$ via state estimate $\hat{\mathbf{x}}_{k}$ and hence, the approximation. The perfect knowledge of distribution $p(\mathbf{y}_{k}|\mathbf{x}_{k})$ from \eqref{eqn:observation y} can compensate for this approximation to some extent. For sampling density $q^{*}(\cdot)$, the weights are computed from \eqref{eqn:IPF weight 2} as
\par\noindent\small
\begin{align}
\omega^{i}_{k}\propto\omega^{i}_{k-1}p(\mathbf{a}_{k}|\hat{\mathbf{x}}^{i}_{k}).\label{eqn:optimal weights}
\end{align}
\normalsize

\vspace{-8pt}
\subsection{I-PF recursions}\label{subsec:IPF recursion}
Consider the sampling density $q^{*}(\cdot)$ from \eqref{eqn:optimal density}. We have $p(\hat{\mathbf{x}}_{k}|\hat{\mathbf{x}}_{k-1},\mathbf{y}_{k})=\delta(\hat{\mathbf{x}}_{k}-T(\hat{\mathbf{x}}_{k-1},\mathbf{y}_{k}))$ from \eqref{eqn:filter T} and $p(\mathbf{y}_{k}|\mathbf{x}_{k})$ is given by \eqref{eqn:observation y}. Here, we consider resampling at each time step and $p(\mathbf{a}_{k}|\hat{\mathbf{x}}_{k})$ in \eqref{eqn:optimal weights} is given by \eqref{eqn:observation a}. Note that particles $\{\mathbf{y}^{i}_{k}\}$ are sampled from \eqref{eqn:observation y} using the true state $\mathbf{x}_{k}$ only. Further, since defender knows $\mathbf{x}_{k}$, density \eqref{eqn:optimal density} and weights \eqref{eqn:optimal weights} do not require previous particles $\{\mathbf{y}^{i}_{k-1}\}$ and hence, they need not be stored for the next recursion.

Initialize $\hat{\mathbf{x}}^{i}_{0}\sim\widetilde{\pi}^{x}_{0}(d\hat{\mathbf{x}}_{0})$ where $\widetilde{\pi}^{x}_{0}$ is the initial distribution assumed by the defender for the forward filter's initial estimate $\hat{\mathbf{x}}_{0}$. At time $(k-1)$, we have particles $\{\hat{\mathbf{x}}^{i}_{k-1}\}_{1\leq i\leq N}$ with equal weights ($1/N$) because of resampling. Finally, the I-PF recursions to compute the updated particles $\{\hat{\mathbf{x}}^{i}_{k}\}_{1\leq i\leq N}$ are as follows.\\
\textit{1) SIS:} For $i=1,2,\hdots,N$, draw i.i.d. observation particles $\overline{\mathbf{y}}^{i}_{k}\sim\rho(\mathbf{y}_{k}|\mathbf{x}_{k})$ and obtain state estimate particles $\hat{\overline{\mathbf{x}}}^{i}_{k}=T(\hat{\mathbf{x}}^{i}_{k-1},\overline{\mathbf{y}}^{i}_{k})$.\\
\textit{2) Modification:} For a given threshold $\gamma_{k}>0$, check if $\frac{1}{N}\sum_{i=1}^{N}\beta(\mathbf{a}_{k}|\hat{\overline{\mathbf{x}}}^{i}_{k})\geq\gamma_{k}$. If the inequality is satisfied, we proceed to step 3 (similar to standard PF), otherwise, we return to step 1 and redraw particles from the sampling density.\\
\textit{3) Weight computation:} Set $\hat{\widetilde{\mathbf{x}}}^{i}_{k}=\hat{\overline{\mathbf{x}}}^{i}_{k}$ and $\widetilde{\mathbf{y}}^{i}_{k}=\overline{\mathbf{y}}^{i}_{k}$ for $i=1,2,\hdots,N$. These particles estimate the prediction distribution $\pi_{k|k-1}$ as
    \par\noindent\small
    \begin{align*}
    \pi_{k|k-1}\approx\widetilde{\pi}^{N}_{k|k-1}(d\hat{\mathbf{x}}_{k},d\mathbf{y}_{k})\doteq\frac{1}{N}\sum_{i=1}^{N}\delta(\hat{\mathbf{x}}_{k}-\hat{\widetilde{\mathbf{x}}}^{i}_{k},\mathbf{y}_{k}-\widetilde{\mathbf{y}}^{i}_{k})d\hat{\mathbf{x}}_{k}d\mathbf{y}_{k}.
    \end{align*}
    \normalsize
    
    Since particles$\{\hat{\widetilde{\mathbf{x}}}^{i}_{k},\widetilde{\mathbf{y}}^{i}_{k}\}$ have equal weights due to resampling, using \eqref{eqn:optimal weights}, we compute the weights as $\widetilde{\omega}^{i}_{k}=\beta(\mathbf{a}_{k}|\hat{\widetilde{\mathbf{x}}}^{i}_{k})$ for $i=1,2,\hdots,N$ and normalize $\omega^{i}_{k}=\widetilde{\omega}^{i}_{k}/\sum_{j=1}^{N}\widetilde{\omega}^{j}_{k}$. Using $\{\hat{\widetilde{\mathbf{x}}}^{i}_{k},\widetilde{\mathbf{y}}^{i}_{k}\}$ with weights $\{\omega^{i}_{k}\}$, we obtain the approximate posterior distribution
    \par\noindent\small
    \begin{align*}
    \pi_{k|k}\approx\widetilde{\pi}^{N}_{k|k}(d\hat{\mathbf{x}}_{k},d\mathbf{y}_{k})\doteq\sum_{i=1}^{N}\omega^{i}_{k}\delta(\hat{\mathbf{x}}_{k}-\hat{\widetilde{\mathbf{x}}}^{i}_{k},\mathbf{y}_{k}-\widetilde{\mathbf{y}}^{i}_{k})d\hat{\mathbf{x}}_{k}d\mathbf{y}_{k}.
    \end{align*}
    \normalsize
\textit{4) Resampling:} Resample the particles by drawing $N$ independent particles as $(\hat{\mathbf{x}}^{i}_{k},\mathbf{y}^{i}_{k})\sim\widetilde{\pi}^{N}_{k|k}(d\hat{\mathbf{x}}_{k},d\mathbf{y}_{k})$. These uniformly weighted particles $\{\hat{\mathbf{x}}^{i}_{k},\mathbf{y}^{i}_{k}\}$ approximate $\pi_{k|k}$ as
    \par\noindent\small
    \begin{align*}
    \pi_{k|k}\approx\pi^{N}_{k|k}(d\hat{\mathbf{x}}_{k},d\mathbf{y}_{k})\doteq\frac{1}{N}\sum_{i=1}^{N}\delta(\hat{\mathbf{x}}_{k}-\hat{\mathbf{x}}^{i}_{k},\mathbf{y}_{k}-\mathbf{y}^{i}_{k})d\hat{\mathbf{x}}_{k}d\mathbf{y}_{k}.
    \end{align*}
    \normalsize
Here, similar to \cite{hu2008basic}, we have introduced an optional modification (step 2) for convenience of the convergence analysis in Section~\ref{sec:ipf convergence}. As mentioned earlier, the resampled particles $\{\mathbf{y}^{i}_{k}\}$ in step 4 are not needed for the next recursion. In general, the estimate of $\phi(\hat{\mathbf{x}},\mathbf{y})$ is computed prior to resampling for better accuracy. For instance, we compute defender's state estimate $\doublehat{\mathbf{x}}_{k}$ as $\doublehat{\mathbf{x}}_{k}=\sum_{i=1}^{N}\omega^{i}_{k}\hat{\widetilde{\mathbf{x}}}^{i}_{k}$.
\begin{remark}[Threshold $\gamma_{k}$ intuition]\label{remark:threshold}
    Note that the optimal inverse filter \eqref{eqn:optimal filter time update}-\eqref{eqn:optimal filter measurement update} exists if $\langle\pi_{k|k-1},\beta\rangle>0$. In I-PF, we approximate $\pi_{k|k-1}$ by $\widetilde{\pi}^{N}_{k|k-1}$ such that $\langle\pi_{k|k-1},\beta\rangle\approx\langle\widetilde{\pi}^{N}_{k|k-1},\beta\rangle=\frac{1}{N}\sum_{i=1}^{N}\beta(\mathbf{a}_{k}|\hat{\widetilde{x}}^{i}_{k})$. In step 2, we require $\langle\widetilde{\pi}^{N}_{k|k-1},\beta\rangle\geq\gamma_{k}$. Hence, this condition is motivated by the existence of the optimal filter and has been previously used as an indicator of divergence in PFs \cite{crisan2002survey,hu2008basic}. The threshold $\gamma_{k}$ must be chosen so that the inequality is satisfied for sufficiently large $N$ and in practice, modifies the PF algorithm only for small $N$. Theorem~\ref{thm:ipf convergence} further guarantees that the algorithm will not run into an infinite loop (in steps 1 and 2) provided that $\gamma_{k}$ is chosen small enough.
\end{remark}
\begin{figure}
  \centering
  \includegraphics[width = 1.0\columnwidth]{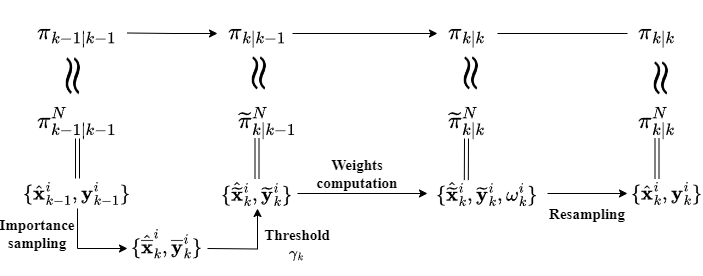}
  \caption{Graphical representation of posterior distributions in I-PF.}
 \label{fig:ipf schematic}
\end{figure}
    \begin{table}[ht]
    \caption{I-PF compared with I-KF\cite{krishnamurthy2019how}}
    \label{tbl:differences_ikf}
    \centering
    \begin{tabular}{p{1.3cm}p{3cm}p{3cm}}
    \hline\noalign{\smallskip}
    Detail & I-KF & I-PF\\
    \noalign{\smallskip}
    \hline
    \noalign{\smallskip}
    System model & Linear, additive Gaussian noise & Non-linear, non-Gaussian\\
    Forward filter & KF & General forward filter $T(\cdot)$\\
    Posterior distribution & Gaussian & Empirical approximation of an arbitrary posterior\\
    Principle & Standard KF-like linear updates to compute $\doublehat{\mathbf{x}}_{k}$ & SIS and resampling to approximate $p(\hat{\mathbf{x}}_{k},\mathbf{y}_{k}|\mathbf{x}_{0:k},\mathbf{a}_{1:k})$\\
    \noalign{\smallskip}
    \hline\noalign{\smallskip}
    \end{tabular}
    \vspace{-10pt}
    \end{table}
    \begin{table}[ht]
    \caption{I-PF compared with conventional PF}
    \label{tbl:differences_pf}
    \centering
    \begin{tabular}{p{1.3cm}p{3cm}p{3cm}}
    \hline\noalign{\smallskip}
    Detail & Conventional PF & I-PF\\
    \noalign{\smallskip}
    \hline
    \noalign{\smallskip}
    Inference & Estimate $\mathbf{x}_{k}$ given observations $\mathbf{y}_{1:k}$ & Estimate $\hat{\mathbf{x}}_{k}$ given true states $\mathbf{x}_{0:k}$ and observations $\mathbf{a}_{1:k}$\\
    Employing agent & Attacker & Defender, with a general forward filter $T(\cdot)$ at attacker's end\\
    Posterior distribution & Empirical approximation for $p(\mathbf{x}_{k}|\mathbf{y}_{1:k})$ & Empirical approximation for $p(\hat{\mathbf{x}}_{k},\mathbf{y}_{k}|\mathbf{x}_{0:k},\mathbf{a}_{1:k})$\\
    Sampling density & Transitional prior; sampling from optimal density is impractical & Optimal density; sampling from transitional prior is impractical\\
    Convergence guarantees & Bounded state transition $\mathcal{K}(\cdot)$ and observation $\rho(\cdot)$ & Bounded observations $\rho(\cdot)$ and $\beta(\cdot)$\\
    \noalign{\smallskip}
    \hline\noalign{\smallskip}
    \end{tabular}
    \vspace{-10pt}
    \end{table}

 For the proposed I-PF, under the assumption of known forward filter, we need to be able to sample from observation distribution $\rho(\cdot)$ and compute the density $\beta(\mathbf{a}_{k}|\hat{\mathbf{x}}_{k})$ at current observation $\mathbf{a}_{k}$. I-PF can handle non-Gaussian systems if these conditions are met. Fig.~\ref{fig:ipf schematic} provides a schematic illustration of posterior distributions and their approximations, while Tables~\ref{tbl:differences_ikf} and \ref{tbl:differences_pf} highlight the key differences of I-PF from I-KF\cite{krishnamurthy2019how} and the conventional PF, respectively.
\begin{remark}[I-PF's optimal importance density]\label{remark:IPF optimal density}
    For the considered defender-attacker dynamics, we are able to sample from the optimal density \eqref{eqn:optimal density} under the assumption of known forward filter. Another popular but suboptimal choice of $q(\cdot)$ is the transitional prior \cite{ristic2003beyond}, i.e., $q(\hat{\mathbf{x}}_{k},\mathbf{y}_{k}|\hat{\mathbf{x}}_{k-1},\mathbf{y}_{k-1},\mathbf{x}_{k},\mathbf{a}_{k})=p(\hat{\mathbf{x}}_{k},\mathbf{y}_{k}|\hat{\mathbf{x}}_{k-1},\mathbf{y}_{k-1})$ for I-PF. However, $p(\hat{\mathbf{x}}_{k},\mathbf{y}_{k}|\hat{\mathbf{x}}_{k-1},\mathbf{y}_{k-1})=p(\hat{\mathbf{x}}_{k}|\hat{\mathbf{x}}_{k-1},\mathbf{y}_{k})p(\mathbf{y}_{k}|\mathbf{y}_{k-1})$ such that to sample from the transitional prior, we need to sample from $p(\mathbf{y}_{k}|\mathbf{y}_{k-1})$. Hence, contrary to the standard PF, it is easier to sample from the I-PF's optimal density \eqref{eqn:optimal density} than the transitional prior. This is possible because of the perfect knowledge of actual state $\mathbf{x}_{k}$ available to the defender such that we can directly sample from $p(\mathbf{y}_{k}|\mathbf{x}_{k})$.
\end{remark}

\vspace{-8pt}
\subsection{I-PF variants}\label{subsec:IPF variants}
As in the case of standard PF, resampling in I-PF reduces degeneracy of particles over time, but introduces sample impoverishment \cite{arulampalam2002tutorial}. Since the particles with large weights are statistically selected many times, there is a loss of diversity among particles, which may also lead to collapse to a single point in case of small system noises. Different techniques like Markov chain MC move step \cite{carlin1992monte} and regularization \cite{musso2001improving} have been proposed to address sample impoverishment. The I-PF developed here is a basic inverse filter based on the SIS and resampling techniques. Different choices of sampling density $q(\cdot)$ and/or modification of the resampling step lead to different variants of PFs. Similar modifications can also be introduced to the basic I-PF to obtain suitable variants for different applications. For instance, in auxiliary PF \cite{pitt1999filtering}, the previous particles are resampled conditioned on the current measurement before importance sampling such that the particles are most likely close to the current true state. On the other hand, regularized PF \cite{musso2001improving} considers a kernel density $K_{h}(\cdot)$ to resample from a continuous distribution instead of a discrete one, i.e., $(\hat{\mathbf{x}}^{i}_{k},\mathbf{y}^{i}_{k})\sim\sum_{i=1}^{N}\omega^{i}_{k}K_{h}(\hat{\mathbf{x}}_{k}-\hat{\widetilde{\mathbf{x}}}^{i}_{k},\widetilde{\mathbf{y}}_{k}-\mathbf{y}^{i}_{k})$ for $i=1,2,\hdots,N$.

\vspace{-8pt}
\section{Convergence guarantees}
\label{sec:ipf convergence}
Several guarantees for convergence of the standard PF to the optimal filter's posterior have been provided in the literature \cite{moral2004feynman,crisan2002survey}. In \cite{le2004stability}, a Hilbert projective metric is considered to study the optimal filter's stability, which is then used to derive uniform convergence conditions for PFs. The survey by \cite{crisan2002survey} showed almost sure convergence assuming a Feller transition kernel and a bounded, continuous, strictly positive observation likelihood. Other approaches include central limit theorems  \cite{del1999central} and large deviations \cite{crisan1999large,del1998large}. However, all these prior works assume the (estimated) function $\phi(\mathbf{x})$ of the underlying state $\mathbf{x}$ to be bounded and hence, exclude the state estimate itself, i.e., $\phi(\mathbf{x})=\mathbf{x}$. Recently, \cite{hu2008basic} have addressed the general case of the unbounded function $\phi$ and proved PF's convergence in $L^{4}$-sense under some mild assumptions on the rate of increase of $\phi$. An extended $L^{p}$-convergence result has also been obtained using a Rosenthal-type inequality in \cite{hu2011general}.

In the following, we consider the $L^{4}$-approach of \cite{hu2008basic} to derive the conditions for our I-PF's convergence to the optimal filter \eqref{eqn:optimal filter time update}-\eqref{eqn:optimal filter measurement update}. Particularly, we show that for a given time $k$ and given observations $\mathbf{a}_{1:k}$ and known true states $\mathbf{x}_{0:k}$, the I-PF's $\langle\pi^{N}_{k|k},\phi\rangle$ converges to the optimal filter's $\langle\pi_{k|k},\phi\rangle$ as the number of particles $N$ increases. Note that for given $\{\mathbf{a}_{1:k},\mathbf{x}_{0:k}\}$, the optimal distribution $\pi_{k|k}$ is deterministic, but $\pi^{N}_{k|k}$ is random because of the randomly generated particles. Hence, all the stochastic expectations $\mathbb{E}$ and almost sure convergence are with respect to these random particles.

We assume that the system model \eqref{eqn:state x}-\eqref{eqn:observation a} and estimated function $\phi$ satisfy the following conditions.\\
\textbf{A1.} For given $\mathbf{x}_{0:s}$ and $\mathbf{a}_{1:s}$ for $s=1,2,\hdots,k$, $\langle\pi_{s|s-1},\beta\rangle>0$ and there exits $\{\gamma_{s}\}_{1\leq s\leq k}$ such that $0<\gamma_{s}<\langle\pi_{s|s-1},\beta\rangle$.\\
\textbf{A2.} The observation densities are bounded, i.e., $\beta(\mathbf{a}_{s}|\hat{\mathbf{x}}_{s})<\infty$ and $\rho(\mathbf{y}_{s}|\mathbf{x}_{s})<\infty$ for $s=1,2,\hdots,k$.\\
\textbf{A3.} The function $\phi$ satisfies $\textrm{sup}_{(\hat{\mathbf{x}}_{s},\mathbf{y}_{s})} |\phi(\hat{\mathbf{x}}_{s},\mathbf{y}_{s})|^{4}\beta(\mathbf{a}_{s}|\hat{\mathbf{x}}_{s})<C(\mathbf{x}_{0:s},\mathbf{a}_{1:s})$ for given $\mathbf{x}_{0:s},\mathbf{a}_{1:s}, s=1,2,\hdots,k$. Here, $C(\mathbf{x}_{0:s},\mathbf{a}_{1:s})$ is a finite constant which may depend on $\mathbf{x}_{0:s},$ and $\mathbf{a}_{1:s}$.\\
Recall that $\{\gamma_{s}\}$ are the thresholds introduced in the modification step (step 2) of the I-PF algorithm in Section~\ref{subsec:ipf formulation}. As discussed in Remark~\ref{remark:threshold}, assumption \textbf{A1} is related to the existence of the optimal filter and divergence of PFs. Intuitively, assumption \textbf{A3} states that the conditional observation density $\beta$ must decrease at a rate faster than the function $\phi$ increases. Furthermore, \textbf{A2} and \textbf{A3} imply that the conditional fourth moment of $\phi$ is bounded, i.e., $\langle\pi_{s|s},|\phi|^{4}\rangle<\infty$ \cite{hu2008basic}. Interestingly, while the convergence conditions for PF in \cite{hu2008basic} also assume a bounded transition kernel $\mathcal{K}(\cdot)$, our I-PF only requires bounded observation densities as in \textbf{A2}. Finally, the I-PF convergence is provided in the following theorem.
\begin{theorem}[I-PF convergence]\label{thm:ipf convergence}
Consider the I-PF developed in Section~\ref{subsec:ipf formulation} (including the modification step). If the assumptions \textbf{A1}-\textbf{A3} are satisfied, then the following hold:\\
\textit{1)} For sufficiently large $N$, the algorithm will not run into an infinite loop in steps $1-2$.\\
\textit{2)} For any $\phi$ satisfying \textbf{A3}, there exists a constant $C_{k|k}$, independent of $N$ such that
    \par\noindent\small
    \begin{align}
        \mathbb{E}|\langle\pi^{N}_{k|k},\phi\rangle-\langle\pi_{k|k},\phi\rangle|^{4}\leq C_{k|k}\frac{\|\phi\|^{4}_{k,4}}{N^{2}},\label{eqn:ipf converge}
    \end{align}
    \normalsize
    where $\|\phi\|_{k,4}\doteq\textrm{max}\{1,\textrm{max}_{\;0\leq s\leq k}\langle\pi_{s|s},|\phi|^{4}\rangle^{1/4}\}$ and $\pi^{N}_{k|k}$ is generated by the I-PF algorithm.
\end{theorem}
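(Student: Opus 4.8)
The plan is to follow the inductive $L^4$-error propagation scheme of \cite{hu2008basic}, adapted to the I-PF's two-stage recursion (time update via sampling $\overline{\mathbf{y}}^i_k$ and pushing through $T(\cdot,\cdot)$, then measurement update via the weights $\beta(\mathbf{a}_k|\cdot)$ and resampling). I would set up an induction on the time index $s$ with the hypothesis that there exist constants $C_{s|s}$ (independent of $N$) so that $\mathbb{E}|\langle\pi^N_{s|s},\phi\rangle-\langle\pi_{s|s},\phi\rangle|^4\le C_{s|s}\,\|\phi\|^4_{s,4}/N^2$ for every $\phi$ satisfying \textbf{A3}, together with the companion bound for the prediction measure $\widetilde\pi^N_{s|s-1}$. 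The base case $s=0$ is just an i.i.d. MC bound for sampling from $\widetilde\pi^x_0$, giving the $N^{-2}$ rate via the standard fourth-moment estimate for sums of i.i.d. centered terms (Marcinkiewicz--Zygmund / direct expansion of $(\sum \xi_i)^4$ keeping only the $O(N^2)$ diagonal terms).

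\textbf{Inductive step.} The step decomposes into three pieces, each contributing an $O(N^{-2})$ error that I would combine with Minkowski's inequality in $L^4$ (i.e., $\|A+B\|_4\le\|A\|_4+\|B\|_4$ applied to the random variables $\langle\cdot,\phi\rangle-\langle\cdot,\phi\rangle$). (i) \emph{Sampling/time update:} conditioned on the resampled particles $\{\hat{\mathbf{x}}^i_{k-1}\}$, the pairs $(\hat{\overline{\mathbf{x}}}^i_k,\overline{\mathbf{y}}^i_k)$ are conditionally i.i.d., so $\langle\widetilde\pi^N_{k|k-1},\psi\rangle$ is a conditional MC average of $\langle\delta_T\rho\,\psi\rangle$-type quantities; the conditional fourth-moment bound gives $\mathbb{E}\big[|\langle\widetilde\pi^N_{k|k-1},\psi\rangle-\langle\pi^N_{k-1|k-1},\delta_T\rho\psi\rangle|^4\mid\mathcal F_{k-1}\big]\le c\,\|\psi\|^4/N^2$, where the relevant sup-norm of $\psi$ is controlled because \textbf{A2} bounds $\rho$ and \textbf{A3} bounds $|\phi|^4\beta$ (and hence the test functions one needs in the predictive integrals). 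Then the induction hypothesis for $\pi^N_{k-1|k-1}$ handles the deterministic-operator error $\langle\pi^N_{k-1|k-1},\delta_T\rho\psi\rangle-\langle\pi_{k-1|k-1},\delta_T\rho\psi\rangle=\langle\pi_{k|k-1},\psi\rangle$. (ii) \emph{Modification step:} here I would invoke \textbf{A1} — since $\gamma_k<\langle\pi_{k|k-1},\beta\rangle$ and $\langle\widetilde\pi^N_{k|k-1},\beta\rangle\to\langle\pi_{k|k-1},\beta\rangle$ in $L^4$ by part (i) applied with $\psi=\beta$ (bounded by \textbf{A2}), the probability that the test $\frac1N\sum\beta(\mathbf{a}_k|\hat{\overline{\mathbf{x}}}^i_k)\ge\gamma_k$ fails is $O(N^{-2})$; this both proves claim~(1) (no infinite loop for large $N$, by Borel--Cantelli one even gets a.s. eventual acceptance) and shows the conditioning-on-acceptance event perturbs the $L^4$ bounds only by a harmless multiplicative constant. (iii) \emph{Measurement update + resampling:} write $\langle\pi^N_{k|k},\phi\rangle$ in terms of the normalized weights; the Bayes-ratio error is handled by the standard identity $\langle\pi_{k|k},\phi\rangle=\langle\pi_{k|k-1},\beta\phi\rangle/\langle\pi_{k|k-1},\beta\rangle$ and an $L^4$ version of the "ratio of nearly-correct numerator and denominator" lemma, using the lower bound $\langle\pi_{k|k-1},\beta\rangle\ge$ (something positive, $\ge\gamma_k$ after modification) in the denominator so no blow-up occurs; the resampling step adds one more conditionally-i.i.d. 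MC layer, again $O(N^{-2})$.

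\textbf{Bookkeeping of the constant and the norm.} Throughout, I would track how the test function gets transformed: the measurement update multiplies by $\beta/\langle\pi_{k|k-1},\beta\rangle$, and \textbf{A3} is exactly the hypothesis that makes $|\phi|^4\beta$ bounded, so that the pushed-forward test functions appearing in the predictive integrals have finite sup-norm; combined with \textbf{A2} this yields $\langle\pi_{s|s},|\phi|^4\rangle<\infty$ (as remarked before the theorem), which is what the norm $\|\phi\|_{k,4}=\max\{1,\max_{0\le s\le k}\langle\pi_{s|s},|\phi|^4\rangle^{1/4}\}$ packages. Collecting the three $O(N^{-2})$ contributions through the $k$ recursion steps produces a finite $C_{k|k}$ depending on $k$, on the $\gamma_s$, on $\|\beta\|_\infty,\|\rho\|_\infty$, and on the constants $C(\mathbf{x}_{0:s},\mathbf{a}_{1:s})$ from \textbf{A3}, but not on $N$; this gives \eqref{eqn:ipf converge}, and since $\sum_N N^{-2}<\infty$, Markov's inequality plus Borel--Cantelli upgrades it to almost sure convergence.

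\textbf{Main obstacle.} I expect the delicate point to be item (ii)–(iii): handling the \emph{conditioning on the acceptance event} in the modification step while keeping the $L^4$ bounds clean, and controlling the Bayes-normalization ratio in $L^4$ (not just $L^2$) without the denominator approaching zero. The acceptance event correlates all particles, so one must argue that on its complement (an $O(N^{-2})$-probability event) the quantities are still bounded well enough — which ultimately rests on \textbf{A3} forcing $|\phi|^4\beta$ to be bounded so that even conditionally "bad" draws contribute a controlled amount — and then absorb everything into the constant. Making this rigorous, rather than hand-wavy, while mirroring \cite{hu2008basic}'s lemmas, is where the real work lies; the rest is the now-standard interacting-particle $L^p$ machinery.
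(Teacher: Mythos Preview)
Your proposal is correct and follows essentially the same route as the paper: induction on $k$ via the $L^4$ machinery of \cite{hu2008basic}, a three-piece decomposition at each stage combined through Minkowski, the rejection probability bounded by $O(N^{-2})$ via Markov's inequality applied to the prediction-level $L^4$ bound with test function $\beta$, and the Bayes ratio controlled using the floor $\langle\widetilde\pi^N_{k|k-1},\beta\rangle\ge\gamma_k$ enforced by the modification step. The one sharpening you will need is that the companion hypothesis actually carried through the induction is $\mathbb{E}\langle\pi^N_{s|s},|\phi|^4\rangle\le M_{s|s}\|\phi\|^4_{s,4}$ on the \emph{posterior} empirical measure (not the prediction measure), and it is this moment bound---not a sup-norm on the test function---that closes the conditional MC estimates in your steps~(i) and~(iii) when $\phi$ is unbounded: the conditional fourth-moment sampling error is bounded by a constant times $N^{-2}\langle\pi^N_{k-1|k-1},\delta_T\rho|\phi|^4\rangle$ (respectively $N^{-2}\langle\widetilde\pi^N_{k|k},|\phi|^4\rangle$ for resampling), whose expectation is controllable only through this auxiliary inductive bound.
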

\begin{proof}
    See Appendix~\ref{app:ipf convergence}.
\end{proof}
As a consequence of Theorem~\ref{thm:ipf convergence}, the following corollary can be obtained trivially using the Borel-Cantelli lemma as in \cite[Proposition~7.2.3(a)]{athreya2006measure}.
\begin{corollary}\label{cor:almost sure convergence}
If \textbf{A1}-\textbf{A3} holds, then for any $\phi$ satisfying \textbf{A3}, we have $\lim_{N\to\infty}\langle\pi^{N}_{k|k},\phi\rangle=\langle\pi_{k|k},\phi\rangle$ in the almost sure convergence sense.
\end{corollary}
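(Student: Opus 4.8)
The plan is to deduce the almost-sure statement directly from the quartic moment bound \eqref{eqn:ipf converge} of Theorem~\ref{thm:ipf convergence} via a standard Markov-inequality-plus-Borel--Cantelli argument, following the route of \cite[Proposition~7.2.3(a)]{athreya2006measure}. Write $Z_{N}\doteq\langle\pi^{N}_{k|k},\phi\rangle-\langle\pi_{k|k},\phi\rangle$. Since $\{\mathbf{x}_{0:k},\mathbf{a}_{1:k}\}$ are fixed, $\langle\pi_{k|k},\phi\rangle$ is a deterministic constant and the only randomness in $Z_{N}$ enters through the randomly generated I-PF particles; hence $Z_{N}\to 0$ almost surely is exactly the asserted convergence $\langle\pi^{N}_{k|k},\phi\rangle\to\langle\pi_{k|k},\phi\rangle$. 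Note also that $\|\phi\|_{k,4}<\infty$ is guaranteed by assumptions \textbf{A2}--\textbf{A3}, as already observed just before the theorem statement, so the right-hand side of \eqref{eqn:ipf converge} is finite.

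First I would fix an arbitrary $\epsilon>0$ and apply Markov's inequality to $|Z_{N}|^{4}$: since $\mathbb{P}(|Z_{N}|>\epsilon)=\mathbb{P}(|Z_{N}|^{4}>\epsilon^{4})\leq\epsilon^{-4}\,\mathbb{E}|Z_{N}|^{4}$, the bound \eqref{eqn:ipf converge} yields $\mathbb{P}(|Z_{N}|>\epsilon)\leq C_{k|k}\|\phi\|^{4}_{k,4}\,\epsilon^{-4}N^{-2}$. Because the constant $C_{k|k}$ is independent of $N$ and $\sum_{N\geq 1}N^{-2}<\infty$, summing over $N$ gives $\sum_{N}\mathbb{P}(|Z_{N}|>\epsilon)<\infty$, i.e. the tail probabilities are summable.

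Next, by the first Borel--Cantelli lemma the summability gives $\mathbb{P}(|Z_{N}|>\epsilon \text{ infinitely often})=0$ for the fixed $\epsilon$. To upgrade this to $Z_{N}\to 0$ almost surely, I would intersect these almost-sure events over the countable sequence $\epsilon=1/m$, $m\in\mathbb{N}$. The exceptional set satisfies $\{\limsup_{N}|Z_{N}|>0\}=\bigcup_{m\geq 1}\{|Z_{N}|>1/m \text{ i.o.}\}$, a countable union of null sets, and is therefore itself null. Hence $Z_{N}\to 0$ almost surely, which is the claim.

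There is essentially no hard step here: the entire content of the corollary is carried by the $N^{-2}$ decay rate in \eqref{eqn:ipf converge}, which is precisely what makes the tail probabilities summable. The only point worth emphasizing is that this summability is what distinguishes the present conclusion from mere convergence in probability; a weaker $N^{-1}$-type $L^{2}$ bound would give $\sum_{N}\mathbb{P}(|Z_{N}|>\epsilon)$ divergent and would not close the Borel--Cantelli step directly. Thus the $L^{4}$ strength of Theorem~\ref{thm:ipf convergence} is exactly what is needed, and the remaining work is the routine two-line application described above.
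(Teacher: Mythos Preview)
Your proposal is correct and is essentially the same argument the paper intends: the paper's proof is nothing more than a one-line invocation of the Borel--Cantelli lemma (citing \cite[Proposition~7.2.3(a)]{athreya2006measure}) applied to the $N^{-2}$ bound of Theorem~\ref{thm:ipf convergence}, and you have simply written out the standard Markov-plus-Borel--Cantelli details that the paper leaves implicit.
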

\begin{remark}[Dependence on state dimension]
    From \eqref{eqn:ipf converge}, we observe that the I-PF's convergence rate does not depend on the state dimension $n_{x}$ and hence, I-PF does not suffer from the curse of dimensionality. However, for a given/desired bound on the error, the required number of particles $N$ depends on constant $C_{k|k}$, which can depend on $n_{x}$.
\end{remark}
\begin{remark}[Bound on $C_{k|k}$]
    In general, without any additional assumptions, we cannot guarantee that $C_{k|k}$ will not increase over time. In particular, if the optimal filter does not `forget' its initial condition, the approximation errors accumulate over time such that $C_{k|k}$ increases \cite{crisan2002survey}. Hence, the required number of particles $N$ also increases proportionally for the error to remain within a given bound. However, prior works \cite{del2001stability,le2004stability} provide additional conditions on the system model to ensure the optimal filter mixes quickly (forgets its initial condition) and $C_{k|k}$ does not increase with time.
\end{remark}

\vspace{-8pt}
\section{Unknown system dynamics}\label{sec:unknown}
So far, we assumed that both the attacker and the defender have perfect system information. Nevertheless, in real-world scenarios, the agents employing the stochastic filters may not have any prior knowledge about the system model. The attacker may not be aware of the defender's state evolution process \eqref{eqn:state x}. On the other hand, the defender may not know the attacker's action strategy such that \eqref{eqn:observation a} is not available. Further, the attacker's forward filter may be unknown and assuming a simple forward EKF/UKF may be inefficient. To this end, in the following, we present a differentiable I-PF to learn the state estimates and the model parameters.

Differentiable PFs (DPFs) construct the system dynamics and the proposal distributions using learning networks and optimize them using gradient descent. However, major challenges in developing DPFs are the non-differentiable importance sampling and resampling steps. Sampling from a proposal distribution is not differentiable because of the absence of explicit dependency between the sampled particles and the distribution parameters. On the other hand, the discrete nature of multinomial resampling makes it inherently non-differentiable, i.e., a small change in input weights can lead to abrupt changes in the resampling output. Additionally, the resampled particles are equally weighted, which is a constant such that the gradients are always zero. Hence, DPFs employ reparameterization-based differentiable sampling, and various differentiable resampling techniques \cite{chen2023overview,karkus2018particle,corenflos2021differentiable,zhu2020towards}. Another important factor affecting DPFs' performance is the loss function minimized in gradient descent to optimize the parameters.

Here, we discuss how this differentiable framework can be integrated into our I-PF to handle unknown dynamics in inverse filtering. Following I-PF's formulation in Section~\ref{subsec:ipf formulation}, differentiable I-PF considers the joint conditional density $p(\hat{\mathbf{x}}_{k},\mathbf{y}_{k}|\mathbf{x}_{0:k},\mathbf{a}_{1:k})$. The formulation then closely follows from standard DPF methods, and hence, we only summarize them here. We refer the readers to \cite{chen2023overview} (and the references therein) for further details.

\subsubsection{Proposal distributions} The simplest choice of proposal distribution in PFs is the system's state evolution. In DPFs, the sampled particle from this state evolution is computed as a function of the previous particle and an additional noise term. The corresponding function is parameterized by an unknown parameter $\theta$ and is differentiable with respect to both the previous particle and the noise term. Similarly, in differentiable I-PF, we can consider a differentiable model for the optimal importance sampling density \eqref{eqn:optimal density}. For instance, we can model $p(\hat{\mathbf{x}}_{k}|\hat{\mathbf{x}}_{k-1},\mathbf{y}_{k})p(\mathbf{y}_{k}|\mathbf{x}_{k})$ as a jointly Gaussian distribution $\mathcal{N}(\hat{\mathbf{x}},\mathbf{y}_{k};\mu_{\theta}(\hat{\mathbf{x}}_{k-1},\mathbf{x}_{k}),\bm{\Sigma}_{\theta})$. In this case, the sampled particles $(\hat{\widetilde{\mathbf{x}}}^{i}_{k},\widetilde{\mathbf{y}}^{i}_{k})$ are obtained by adding zero-mean Gaussian noise of covariance $\bm{\Sigma}_{\theta}$ to the mean $\mu_{\theta}(\hat{\mathbf{x}}^{i}_{k-1},\mathbf{x}_{k})$ where $\hat{\mathbf{x}}^{i}_{k-1}$ is the (state estimate) particle from previous time instant and $\mathbf{x}_{k}$ is the defender's true state known perfectly. The function $\mu_{\theta}(\cdot)$ is a differentiable function of $\hat{\mathbf{x}}_{k-1}$ while $\bm{\Sigma}_{\theta}$ can be designed manually \cite{karkus2018particle,wen2021end} or parameterized for learning \cite{kloss2021train}.
    
Alternatively, the differential sampling technique based on normalizing flows\cite{chen2021differentiable} draws samples from simple distributions (like Gaussian or uniform) and transforms them into arbitrary distributions through a series of invertible mappings under some mild conditions. Note that even though \eqref{eqn:optimal density} minimizes the variance of weights, the information from current observation $\mathbf{a}_{k}$ is not utilized. In general, constructing proposal distributions with observation $\mathbf{a}_{k}$ provides samples that are closer to the true posterior and more uniformly weighted. Both the Gaussian model and normalising flow-based differential samplings can be generalized to include observation $\mathbf{a}_{k}$ in the sampling distribution \cite{chen2021differentiable,karkus2021differentiable}. Note that if the true state $\mathbf{x}_{k}$ information is not available to the defender, we can directly model $q^{*}(\hat{\mathbf{x}}_{k},\mathbf{y}_{k}|\hat{\mathbf{x}}_{k-1},\mathbf{y}_{k-1},\mathbf{a}_{k})$ from \eqref{eqn:optimal density} as a differentiable function/ distribution of $\hat{\mathbf{x}}^{i}_{k-1},\mathbf{y}^{i}_{k-1}$ and $\mathbf{a}_{k}$, i.e., the sampled particles and available observation.

\subsubsection{Observation models} In differentiable I-PF, instead of \eqref{eqn:observation a}, we consider a parameterized observation model as $p_{\theta}(\mathbf{a}_{k}|\hat{\mathbf{x}}_{k})\propto l_{\theta}(\mathbf{a}_{k},\hat{\mathbf{x}}_{k})$ where $l_{\theta}(\cdot)$ is a differentiable function with respect to $\mathbf{a}_{k}$ and $\hat{\mathbf{x}}_{k}$. To this end, we can consider known distribution with learnable parameters \cite{corenflos2021differentiable} or approximate $p_{\theta}(\mathbf{a}_{k}|\hat{\mathbf{x}}_{k})$ using a scalar function learned from neural network (NN) \cite{karkus2018particle}. The observations can also be mapped to an NN-based feature space as $f_{k}=F_{\theta}(\mathbf{a}_{k})$ such that $l_{\theta}(\mathbf{a}_{k},\hat{\mathbf{x}}_{k})=h_{\theta}(f_{k},\hat{\mathbf{x}}_{k})$. In \cite{wen2021end} and \cite{chen2021differentiable}, NNs are used to extract both observation and state features to measure similarity/discrepancy using user-defined metrics. Alternatively, conditional normalizing flows can also be employed \cite{chen2022conditional}.

\subsubsection{Differentiable resampling} Soft resampling, optimal transport (OT)-based resampling, and particle transformer-based resampling are popular differentiable techniques employed in DPFs and can be readily applied to differentiable I-PF. Soft resampling \cite{karkus2018particle} aims to generate non-zero gradients by modifying the importance weights by a factor $\lambda$ as $\overline{\omega}^{i}_{k}=\lambda\omega^{i}_{k}+(1-\lambda)1/N$ where $N$ is the total number of particles. However, the particles are selected using a multinomial distribution, and hence, the outputs still change abruptly. Soft resampling can be viewed as a linear interpolation between the multinomial distribution of original weights and one with equal weights. Contrarily, resampling using entropy-regularized OT \cite{corenflos2021differentiable} is fully differentiable. In particular, in differentiable I-PF, OT provides a map between the equally weighted empirical distribution $\frac{1}{N}\sum_{i=1}^{N}\delta(\hat{\mathbf{x}}_{k}-\hat{\mathbf{x}}^{i}_{k},\mathbf{y}_{k}-\mathbf{y}^{i}_{k})$ and the target empirical distribution $\sum_{i=1}^{N}\omega^{i}_{k}\delta(\hat{\mathbf{x}}_{k}-\hat{\widetilde{\mathbf{x}}}^{i}_{k},\mathbf{y}_{k}-\widetilde{\mathbf{y}}^{i}_{k})$. Particle transformers \cite{zhu2020towards} are permutation-invariant and scale-equivalent NNs that take weighted particles $\{\omega^{i}_{k},(\hat{\widetilde{\mathbf{x}}}^{i}_{k},\widetilde{\mathbf{y}}^{i}_{k})\}_{1\leq i\leq N}$ as inputs and output resampled particles $\{(\hat{\mathbf{x}}^{i}_{k},\mathbf{y}^{i}_{k})\}_{1\leq i\leq N}$ with equal weights. Particle transformers perform differentiable resampling but require pre-training.

\subsubsection{Loss functions and training} Following the standard DPFs, differentiable I-PFs can be trained using supervised losses like root mean squared error (RMSE) or negative state likelihood when the ground truth, i.e., attacker's state estimate $\hat{\mathbf{x}}_{k}$ and observation $\mathbf{y}_{k}$ are available \cite{karkus2018particle,chen2021differentiable,corenflos2021differentiable}. Semi-supervised losses like marginal observation likelihoods are helpful when unlabelled data is abundant but access to labels is limited \cite{wen2021end}. Alternatively, variational inference optimizes evidence lower bound (ELBO) instead of likelihood to learn the model and proposal distribution simultaneously \cite{maddison2017filtering,le2018auto}. With these objectives, DPFs and hence differentiable I-PFs, can be trained end-to-end \cite{karkus2018particle,wen2021end} or individually \cite{karkus2021differentiable,lee2020multimodal}. In end-to-end training, all components of the filter are jointly trained via gradient descent to minimize an overall loss function. Contrarily, in individual training, various components are first pre-trained independently and then fine-tuned jointly for a task-specific objective.

The idea of DPF is to implement the traditional PF in a data-adaptive and differentiable manner, where the system models are defined using learning networks whose parameters are learned via backpropagation and gradient descent, including differentiation through the inference algorithm itself. Rather than modeling a generic system, DPFs learn an optimized model for inference using PF. In fact, \cite{jonschkowski2018differentiable} interpreted DPF as a recurrent NN (RNN) that leverages the recursive state estimation framework to enhance data efficiency and generalization. In this context, the methods described in points 1-3 outline ways to implement our I-PF such that gradients can be computed through differentiation. The choice of architecture depends on the application and input type. For instance, \cite{ma2020particle,karkus2018particle} used RNNs to model the hidden state's relative motion over one time-step and to extract features from observations, while \cite{lee2020multimodal} encoded image inputs using a convolutional NN (CNN). For high-dimensional observations, encoder networks are often used to learn compact representations\cite{wen2021end}. In \cite{karkus2021differentiable}, the transition and observation models were built using CNNs with spatial transformers for differentiable mapping. Additionally, normalizing flows\cite{papamakarios2021normalizing} and particle transformers\cite{zhu2020towards} are themselves novel architectures developed for constructing probability distributions and differentiable resampling, respectively. With these components, the differentiable I-PF can be trained like standard networks by minimizing task-specific or likelihood-based loss functions (discussed in point 4) via gradient descent. In \cite{wen2021end}, both Adam and RMSProp optimizers were applied with heuristic learning parameters.

\begin{remark}[Trajectory Function of Time]\label{remark:TFoT}
    As an alternative to DPFs, the recent trajectory function of time (T-FoT) fitting enables tracking smooth deterministic target trajectories without prior information\cite{li2018joint,li2016fitting,zhou2021target,li2023target}. Specifically, \cite{li2018joint,zhou2021target} model the unknown state process \eqref{eqn:state x} as a continuous function, i.e., $\mathbf{x}_{t}=f(t,\mathbf{c}_{i})$ where $t$ denotes the continuous time and $\{\mathbf{c}_{i}\}$ are fitting coefficients, transforming the state estimation problem into online curve fitting. Unlike Bayesian methods, it avoids temporal independence assumption (i.e., $\mathbf{x}_{k}$ is independent of $\mathbf{x}_{0:k-2}$ given $\mathbf{x}_{k-1}$) and provides richer insights compared to discrete-time point estimates. However, the T-FoT approach is less suitable for inverse filtering, wherein the attacker's inference process $\{\hat{\mathbf{X}}_{k}\}_{k\geq 0}$ is inherently non-smooth, stochastic, and influenced by the defender's maneuvers (via true state $\mathbf{x}_{k}$). Moreover, multi-dimensional regression being intractable \cite{li2016fitting}, T-FoT relies on conditional independence among state dimensions and physical relationships to infer unobserved variables (e.g., velocity from position), which do not hold for the attacker's state estimates. Consequently, T-FoT methods are currently inadequate to estimate the multi-dimensional outputs of a forward filter in these scenarios.
\end{remark}

\vspace{-5pt}
\section{Numerical Experiments}
\label{sec:simulation}
We demonstrate the estimation performance of the proposed filters compared to I-EKF \cite{singh2022inverse_part1} and I-UKF \cite{singh2023inverse_ukf} using three different example systems. Besides the estimation error, we also consider RCRLB \cite{tichavsky1998posterior} and NCI \cite{li2001practical} as performance metrics in Section~\ref{subsec:nonlinear example}. RCRLB provides a lower bound on MSE for discrete-time filtering, with simplified closed-form recursions for RCRLB computation for non-linear systems with additive Gaussian noises provided in  \cite{xiong2006performance_ukf}. On the other hand, NCI is a credibility measure of how statistically close the estimated error covariance of an estimator is to the actual MSE matrix. In Section~\ref{subsec:bearing tracking}, we focus on estimation error and time-complexity of the proposed algorithm, while Section~\ref{subsec:non gauss} considers a non-Gaussian time-series. Note that in practice, a non-linear filter's performance also depends on the system itself. Choosing a suitable filter for any application usually involves a trade-off between estimation accuracy and computational efforts \cite{li2017approximate}. The same argument holds for the non-linear inverse filters.

Throughout all experiments, for simplicity, we choose EKF as the forward filter $T(\cdot)$ in I-PF, regardless of the actual forward filter, unless mentioned otherwise. Note that I-EKF and I-UKF also assume a forward EKF and UKF, respectively\cite{singh2022inverse_part1,singh2023inverse_ukf}. All forward filters are initialized with the same initial distribution. In particular, if $\mathcal{N}(\mathbf{x}_{0};\hat{\mathbf{x}}_{0},\bm{\Sigma}_{0})$ is the forward filters' initial distribution, then we initialize forward EKF/UKF with initial state $\hat{\mathbf{x}}_{0}$ and initial covariance matrix $\bm{\Sigma}_{0}$ while forward PF consider independent samples drawn from the initial distribution. All inverse filters are initialized similarly.
\begin{figure}
  \centering
  \includegraphics[width = 1.0\columnwidth]{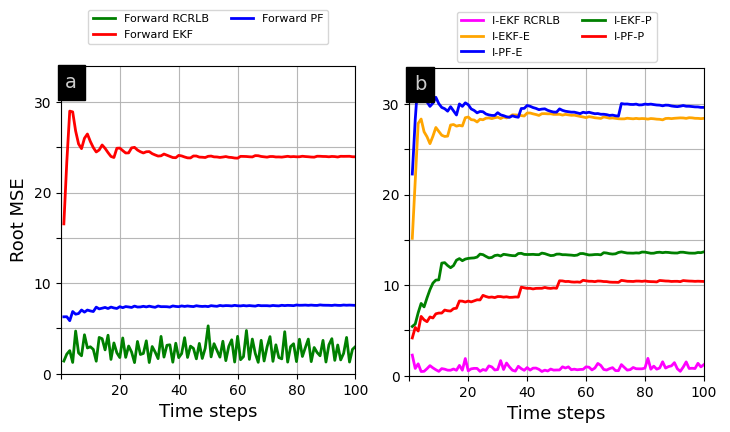}
  \caption{Time-averaged RMSE and RCRLB for (a) forward PF and EKF, and (b) I-PF and I-EKF, including mismatched forward filter cases, for non-linear system example.\vspace{-12pt}}
 \label{fig:nonlinear rmse}
\end{figure}

\vspace{-8pt}
\subsection{Illustrative non-linear system}
\label{subsec:nonlinear example}
Consider a non-linear system\cite{arulampalam2002tutorial}
\par\noindent\small
\begin{align*}
&x_{k+1}=\frac{x_{k}}{2}+\frac{25x_{k}}{1+x_{k}^{2}}+8\cos{(1.2k)}+\textrm{w}_{k},
\end{align*}
\normalsize
with observations $y_{k}=x_{k}^{2}/20+v_{k}$ and $a_{k}=\hat{x}^{2}_{k}/10+\epsilon_{k}$ where $\textrm{w}_{k}\sim\mathcal{N}(0,10)$, $v_{k}\sim\mathcal{N}(0,1)$ and $\epsilon_{k}\sim\mathcal{N}(0,5)$. We set the number of particles for forward PF as $25$ while that for I-PF as $50$. The initial distribution for forward and inverse filters were $\mathcal{N}(0,5)$ and $\mathcal{N}(0,10)$, respectively. Also, the function $T(\cdot)$ is initialized with distribution $\mathcal{N}(0,10)$ in I-PF.

Fig.~\ref{fig:nonlinear rmse} shows the time-averaged RMSE for the forward and inverse filters, averaged over $250$ runs. The I-PF's error in estimating state estimate $\hat{\mathbf{x}}_{k}$ when the attacker's actual forward filter is EKF is labeled as I-PF-E. The other notations in Fig.~\ref{fig:nonlinear rmse} and also, in further experiments, are similarly defined. In Fig.~\ref{fig:nonlinear rmse}, we also include the corresponding RCRLBs. Note that RCRLB for forward PF cannot be derived in closed form because of the lack of an explicit state-transition function and hence, omitted here. Figure \ref{fig:nonlinear rmse}a illustrates that the forward PF outperforms the forward EKF. However, in Figure \ref{fig:nonlinear rmse}b, the errors for I-EKF and I-PF are comparable when estimating forward EKF's $\hat{\mathbf{x}}_{k}$, in the I-EKF-E and I-PF-E scenarios, respectively. Note that, when the forward EKF assumption is violated, I-PF surpasses I-EKF, as seen in the I-EKF-P and I-PF-P cases, making I-PF more accurate for the mismatched forward filter scenario. Figure \ref{fig:nci bearing}a depicts the NCI for various forward and inverse filters. Here, the forward PF achieves near-perfect NCI (close to $0$), indicating its higher credibility. Similarly, I-PF-E and I-PF-P demonstrate significantly lower NCI (in magnitude) than I-EKF-E and I-EKF-P, with I-PF-P achieving perfect NCI once again. On the other hand, both forward and inverse EKFs tend to be overly pessimistic, with their estimated covariance exceeding the actual MSE matrix.

\vspace{-8pt}
\subsection{Bearing only tracking}
\label{subsec:bearing tracking}
Consider a moving sensor tracking a target moving along x-axis using bearing measurements only. Denote $p_{k}$ and $v_{k}$ as the target's position (in m) and velocity (in $\textrm{m}/\textrm{sec}$), respectively, at the $k$-th time instant. The sensor's position is $(s^{x}_{k},s^{y}_{k})$ with $s^{x}_{k}=4k+\Delta s^{x}_{k}$ and $s^{y}_{k}=20+\Delta s^{y}_{k}$ where $\Delta s^{x}_{k}$ and $\Delta s^{y}_{k}$ denote perturbations distributed as $\mathcal{N}(0,1)$. Then, the system model is \cite{lin2002comparison}
\par\noindent\small
\begin{align*}
&\mathbf{x}_{k+1}\doteq\begin{bmatrix}p_{k+1}\\v_{k+1}\end{bmatrix}=\begin{bmatrix}1&T\\0&1\end{bmatrix}\begin{bmatrix}p_{k}\\v_{k}\end{bmatrix}+\begin{bmatrix}T^{2}/2\\T\end{bmatrix}\textrm{w}_{k},\\
&y_{k}=\tan^{-1}\left(\frac{s^{y}_{k}}{p_{k}-s^{x}_{k}}\right)+v_{k},\;a_{k}=\tan^{-1}\left(\frac{s^{y}_{k}}{\hat{p}_{k}-s^{x}_{k}}\right)+\epsilon_{k},
\end{align*}
\normalsize
where $\textrm{w}_{k}\sim\mathcal{N}(0,0.01)$, $v_{k}\sim\mathcal{N}(0,(3^{\circ})^{2})$ and $\epsilon_{k}\sim\mathcal{N}(0,(5^{\circ})^{2})$ with $T=1$ sec. The initial state $\mathbf{x}_{0}$ was $[80,1]^{T}$. The forward filters were initialized with $\mathcal{N}(\hat{\mathbf{x}}_{0},\bm{\Sigma}_{0})$ where $\hat{\mathbf{x}}_{0}=[20/\tan^{-1}(y_{1}),0]^{T}$ and $\bm{\Sigma}_{0}=\textrm{diag}(16,1)$. The inverse filters were initialized with $\mathcal{N}(\mathbf{x}_{0},\mathbf{I}_{2})$. For both forward and inverse PFs, the number of particles was set to $100$. The recursion function $T(\cdot)$ was initialized with $\mathcal{N}(\mathbf{x}_{0},\mathbf{I}_{2})$. EKF's implementation for the considered system is provided in \cite{bar2004estimation}.
\begin{figure}
  \centering
  \includegraphics[width = 1.0\columnwidth]{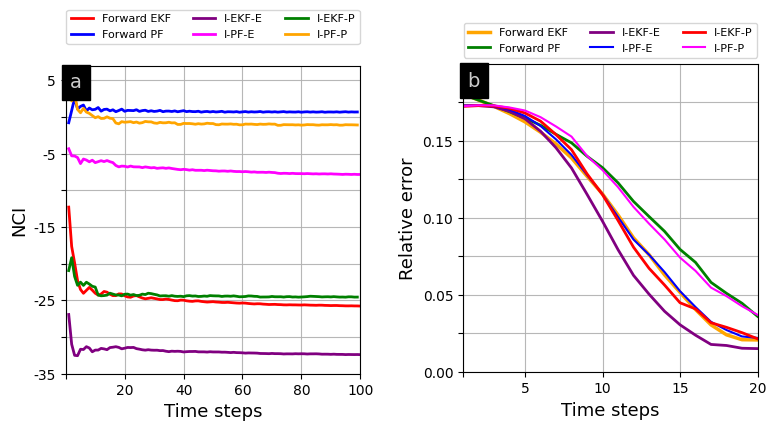}
  \caption{(a) NCI for forward and inverse filters for non-linear system example; and (b) relative error for forward and inverse PF and EKF for bearing-only tracking system.\vspace{-12pt}}
 \label{fig:nci bearing}
\end{figure}
 
Fig.~\ref{fig:nci bearing}b shows the relative error in position for the forward and inverse filters, averaged over $100$ runs. In the bearing tracking example, the forward PF shows a slightly higher error than the forward EKF, except that EKF requires Jacobian computations. The same pattern is observed for the inverse filters, where I-PF-E and I-PF-P exhibit higher errors than I-EKF-E and I-EKF-P, respectively. The superior accuracy of both forward and inverse EKFs in this system is attributed to the Gaussian posterior assumption inherent in these filters. Although the forward PF and I-PF approximate arbitrary posterior distributions empirically, the forward EKF and I-EKF assume a Gaussian posterior, estimating its mean and covariance through Taylor series linearization. Fig.~\ref{fig:nci bearing}b suggests that for this system, the Gaussian posterior assumption actually results in better accuracy compared to the arbitrary distribution approach employed by PF. In fact, \cite{kotecha2003gaussian} recently demonstrated that the Gaussian posterior assumption enhances the accuracy of standard PFs in bearing-only tracking scenarios. Table \ref{tbl:bearing} provides the total run time for 20 time steps (in a single Monte Carlo run) for different forward and inverse filters. As expected, the forward PF has a longer run time than the forward EKF, and this increases as the number of particles $N$ rises. Similarly, I-PF-E and I-PF-P exhibit higher time complexity compared to I-EKF-E and I-EKF-P, respectively. The run time for I-PF is also longer than that of the forward PF, as it requires computing function $T(\cdot)$ for each particle at every recursion. Interestingly, for all $N$, both I-EKF and I-PF have slightly shorter run times when estimating forward PF's $\hat{\mathbf{x}}_{k}$ compared to when estimating forward EKF's $\hat{\mathbf{x}}_{k}$.
    \begin{table}
    \caption{Run time (in seconds) for different filters}
    \label{tbl:bearing}
    \centering
    \begin{tabular}{p{2.0cm}p{1.5cm}p{1.5cm}p{1.5cm}}
    \hline\noalign{\smallskip}
    Filter & $N=100$ & $N=250$ & $N=500$\\
    \noalign{\smallskip}
    \hline
    \noalign{\smallskip}
    Forward EKF & 0.0113 & 0.0119 & 0.0094\\
    Forward PF & 0.0191 & 0.0291 & 0.0389\\
    I-EKF-E & 0.0096 & 0.0104 & 0.0115\\
    I-EKF-P & 0.0012 & 0.0009 & 0.0010\\
    I-PF-E & 0.0415 & 0.0856 & 0.1588\\
    I-PF-P & 0.0314 & 0.0761 & 0.1456\\
    \noalign{\smallskip}
    \hline\noalign{\smallskip}
    \end{tabular}
    \vspace{-12pt}
\end{table}
\vspace{-8pt}
\subsection{Non-Gaussian system}
\label{subsec:non gauss}
Consider the one-dimensional non-Gaussian time series from \cite[Sec.~6]{van2000unscented_pf} and non-stationary observations for attacker and defender as follows: 
\par\noindent\small
\begin{align*}
&x_{k+1}=1+\sin{(\Omega\pi k)}+\phi_{1}x_{k}+\textrm{w}_{k},\\
&y_{k}=\begin{cases}
    \phi_{2}x^{2}_{k}+v_{k}, & k\leq 30\\
    \phi_{3}x_{k}-2+v_{k}, & k>30
\end{cases},\\
&a_{k}=\begin{cases}
    \phi_{4}\hat{x}^{2}_{k}+\epsilon_{k}, & k\leq 35\\
    \phi_{5}\hat{x}_{k}-1+\epsilon_{k}, & k>35
\end{cases},
\end{align*}
\normalsize
where parameters $\Omega=4e-1$, $\phi_{1}=0.5$, $\phi_{2}=0.2$, $\phi_{3}=0.5$, $\phi_{4}=0.02$ and $\phi_{5}=0.05$. The process noise $\textrm{w}_{k}$ follows a non-Gaussian distribution $\textrm{w}_{k}\sim\textrm{Gamma}(3,2)$, while observation noises are distributed as $v_{k}\sim\mathcal{N}(0,10^{-5})$ and $\epsilon_{k}\sim\mathcal{N}(0,1)$. As shown in \cite[Sec.~6]{van2000unscented_pf}, the unscented transform improves PF's accuracy for this system. Accordingly, we consider EKF, UKF, and Unscented PF (UPF)\cite{van2000unscented_pf} as forward filters, with EKF and UKF assuming $\textrm{w}_{k}$ is Gaussian with mean $6$ and variance $6$. Similarly, our I-PF uses UKF as the forward filter function $T(\cdot)$. The unscented transform's scaling parameter $\lambda$ is set to $0.5$, $1$, $2$, and $2$ for forward UKF, I-UKF, forward UPF, and I-PF's $T(\cdot)$, respectively. Forward UPF and I-PF consider $200$ and $300$ particles, respectively. The initial state is $x_{0}=0$, with forward and inverse filters initialized with $\mathcal{N}(0,1)$ and $\mathcal{N}(0,5)$, respectively.

Fig.~\ref{fig:non gauss} shows the time-averaged RMSE for forward and inverse filters, averaged over $100$ runs. For the considered non-Gaussian system, the forward UPF outperforms both forward EKF and UKF. In Fig.~\ref{fig:non gauss}a, I-EKF-E achieves the lowest error when estimating forward EKF's estimates, as it correctly assumes the forward filter. Similarly, in Fig.~\ref{fig:non gauss}b, I-UKF-U demonstrates the highest accuracy when estimating forward UKF's estimates, although I-PF-U also significantly outperforms I-EKF-U. Interestingly, I-UKF-P and I-PF-P exhibit similar errors, both lower than I-EKF-P. The improved performance of I-UKF in this system is attributed to the effective approximation of the unscented transform. However, note that here we utilized the basic I-PF from Section~\ref{subsec:IPF recursion}. Several modifications discussed in Section~\ref{subsec:IPF variants} can further enhance I-PF's performance.

\begin{figure}
  \centering
  \includegraphics[width = 1.0\columnwidth]{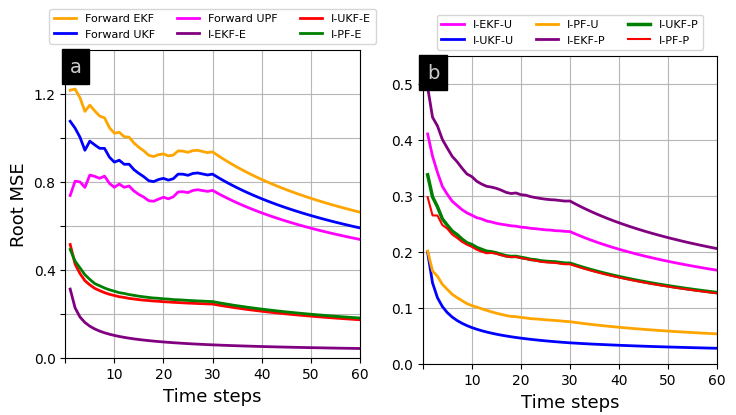}
  \caption{Time-averaged RMSE for (a) forward filters and inverse filters estimating the forward EKF estimates, and (b) inverse filters estimating the forward UKF and UPF estimates in a non-Gaussian system example.\vspace{-12pt}}
 \label{fig:non gauss}
\end{figure}

\vspace{-8pt}
\section{Summary}
\label{sec:summary}
We studied the inverse filtering problem in counter-adversarial systems and developed I-PF based on the SIS and resampling techniques. Unlike prior inverse filters, the proposed I-PF considers general attacker-defender dynamics, including non-Gaussian systems. Furthermore, we proved that under mild assumptions on the system model, I-PF converges to the optimal inverse filter in the $L^{4}$ sense. Differentiable I-PF provides the attacker's state estimate and learns the system parameters for unknown system dynamics in the inverse filtering context. The proposed I-PF yields accurate estimates, even when the forward filter assumptions are violated, and proves to be more reliable than the I-EKF.

\appendices
\vspace{-8pt}
\section{Derivation of (17) and (18)}\label{app:sampling}
The densities \eqref{eqn:ipf p density} and \eqref{eqn:ipf q density} follow from the conditional independence of observations and the estimation process assumed in the system model \eqref{eqn:state x}-\eqref{eqn:observation a} using the standard PF's SIS-based non-linear filtering technique \cite[Chapter~3.2]{ristic2003beyond}. Recall from Section~\ref{subsec:ipf formulation} that we consider the joint density $p(\hat{\mathbf{x}}_{0:k},\mathbf{y}_{1:k}|\mathbf{x}_{0:k},\mathbf{a}_{1:k})=p(\hat{\mathbf{x}}_{0:k},\mathbf{y}_{1:k}|\mathbf{a}_{k},\mathbf{x}_{0:k},\mathbf{a}_{1:k-1})$ to obtain I-PF's optimal sampling density and weights. Using Bayes' theorem, we have
\par\noindent\small
\begin{align}
    &p(\hat{\mathbf{x}}_{0:k},\mathbf{y}_{1:k}|\mathbf{x}_{0:k},\mathbf{a}_{1:k})=\nonumber\\
    &\frac{p(\mathbf{a}_{k}|\hat{\mathbf{x}}_{0:k},\mathbf{y}_{1:k},\mathbf{x}_{0:k},\mathbf{a}_{1:k-1})p(\hat{\mathbf{x}}_{0:k},\mathbf{y}_{1:k}|\mathbf{x}_{0:k},\mathbf{a}_{1:k-1})}{p(\mathbf{a}_{k}|\mathbf{x}_{0:k},\mathbf{a}_{1:k-1})}.\label{eqn:ipf joint density frac}
\end{align}
\normalsize
First, consider $p(\mathbf{a}_{k}|\hat{\mathbf{x}}_{0:k},\mathbf{y}_{1:k},\mathbf{x}_{0:k},\mathbf{a}_{1:k-1})=p(\mathbf{a}_{k}|\hat{\mathbf{x}}_{k},\hat{\mathbf{x}}_{0:k-1},\mathbf{y}_{1:k},\mathbf{x}_{0:k},\mathbf{a}_{1:k-1})$. From \eqref{eqn:observation a}, the defender's observation $\mathbf{a}_{k}$ depends only on the state estimate $\hat{\mathbf{x}}_{k}$ such that $p(\mathbf{a}_{k}|\hat{\mathbf{x}}_{0:k},\mathbf{y}_{1:k},\mathbf{x}_{0:k},\mathbf{a}_{1:k-1})=p(\mathbf{a}_{k}|\hat{\mathbf{x}}_{k})$.

Next, we simplify $p(\hat{\mathbf{x}}_{0:k},\mathbf{y}_{1:k}|\mathbf{x}_{0:k},\mathbf{a}_{1:k-1})$ in the numerator of \eqref{eqn:ipf joint density frac}. To this end, we have $p(\hat{\mathbf{x}}_{k},\hat{\mathbf{x}}_{0:k-1},\mathbf{y}_{1:k}|\mathbf{x}_{0:k},\mathbf{a}_{1:k-1})=p(\hat{\mathbf{x}}_{k}|\hat{\mathbf{x}}_{0:k-1},\mathbf{y}_{1:k},\mathbf{x}_{0:k},\mathbf{a}_{1:k-1})p(\hat{\mathbf{x}}_{0:k-1},\mathbf{y}_{1:k}|\mathbf{x}_{0:k},\mathbf{a}_{1:k-1})$. But, from \eqref{eqn:filter T}, given $T(\cdot)$, state estimate $\hat{\mathbf{x}}_{k}$ is a function of previous estimate $\hat{\mathbf{x}}_{k-1}$ and observation $\mathbf{y}_{k}$ such that $p(\hat{\mathbf{x}}_{k}|\hat{\mathbf{x}}_{0:k-1},\mathbf{y}_{1:k},\mathbf{x}_{0:k},\mathbf{a}_{1:k-1})$ simplifies to $p(\hat{\mathbf{x}}_{k}|\hat{\mathbf{x}}_{k-1},\mathbf{y}_{k})$. Also, $p(\mathbf{y}_{k},\hat{\mathbf{x}}_{0:k-1},\mathbf{y}_{1:k-1}|\mathbf{x}_{0:k},\mathbf{a}_{1:k-1})=p(\mathbf{y}_{k}|\hat{\mathbf{x}}_{0:k-1},\mathbf{y}_{1:k-1},\mathbf{x}_{0:k},\mathbf{a}_{1:k-1})p(\hat{\mathbf{x}}_{0:k-1},\mathbf{y}_{1:k-1}|\mathbf{x}_{0:k},\mathbf{a}_{1:k-1})$. From \eqref{eqn:observation y}, observation $\mathbf{y}_{k}$ depends only on the true state $\mathbf{x}_{k}$ such that $p(\mathbf{y}_{k}|\hat{\mathbf{x}}_{0:k-1},\mathbf{y}_{1:k-1},\mathbf{x}_{0:k},\mathbf{a}_{1:k-1})=p(\mathbf{y}_{k}|\mathbf{x}_{k})$. Lastly, the attacker's estimation process and the observations upto $(k-1)$-th time instant are independent of future state $\mathbf{x}_{k}$ given the past states, i.e., $p(\hat{\mathbf{x}}_{0:k-1},\mathbf{y}_{1:k-1}|\mathbf{x}_{0:k},\mathbf{a}_{1:k-1})=p(\hat{\mathbf{x}}_{0:k-1},\mathbf{y}_{1:k-1}|\mathbf{x}_{0:k-1},\mathbf{a}_{1:k-1})$. Now, substituting $p(\mathbf{a}_{k}|\hat{\mathbf{x}}_{0:k},\mathbf{y}_{1:k},\mathbf{x}_{0:k},\mathbf{a}_{1:k-1})=p(\mathbf{a}_{k}|\hat{\mathbf{x}}_{k})$ and $p(\hat{\mathbf{x}}_{0:k},\mathbf{y}_{1:k}|\mathbf{x}_{0:k},\mathbf{a}_{1:k-1})=p(\hat{\mathbf{x}}_{k}|\hat{\mathbf{x}}_{k-1},\mathbf{y}_{k})p(\mathbf{y}_{k}|\mathbf{x}_{k})p(\hat{\mathbf{x}}_{0:k-1},\mathbf{y}_{1:k-1}|\mathbf{x}_{0:k-1},\mathbf{a}_{1:k-1})$ in \eqref{eqn:ipf joint density frac} yields $p(\hat{\mathbf{x}}_{0:k},\mathbf{y}_{1:k}|\mathbf{x}_{0:k},\mathbf{a}_{1:k})\propto p(\mathbf{a}_{k}|\hat{\mathbf{x}}_{k})\\\times p(\hat{\mathbf{x}}_{k}|\hat{\mathbf{x}}_{k-1},\mathbf{y}_{k})p(\mathbf{y}_{k}|\mathbf{x}_{k})p(\hat{\mathbf{x}}_{0:k-1},\mathbf{y}_{1:k-1}|\mathbf{x}_{0:k-1},\mathbf{a}_{1:k-1})$ which is \eqref{eqn:ipf p density} provided in Section~\ref{subsec:ipf formulation}.

Now, we consider the sampling density \eqref{eqn:ipf q density} of our I-PF. The standard PF estimating states $\mathbf{x}_{0:k}$ given observations $\mathbf{y}_{1:k}$ uses a sampling density $\widetilde{q}(\mathbf{x}_{0:k}|\mathbf{y}_{1:k})$ that factorizes as $\widetilde{q}(\mathbf{x}_{0:k}|\mathbf{y}_{1:k})\doteq \widetilde{q}(\mathbf{x}_{k}|\mathbf{x}_{0:k-1},\mathbf{y}_{1:k})\widetilde{q}(\mathbf{x}_{0:k-1}|\mathbf{y}_{1:k-1})$ \cite[eq.~3.12]{ristic2003beyond}. In the case of I-PF, we are estimating the joint density of $(\hat{\mathbf{x}}_{0:k},\mathbf{y}_{1:k})$ given true states $\mathbf{x}_{0:k}$ and observations $\mathbf{a}_{1:k}$. Hence, we choose a sampling density $q(\cdot)$ such that $q(\hat{\mathbf{x}}_{0:k},\mathbf{y}_{1:k}|\mathbf{x}_{0:k},\mathbf{a}_{1:k})\doteq q(\hat{\mathbf{x}}_{k},\mathbf{y}_{k}|\hat{\mathbf{x}}_{0:k-1},\mathbf{y}_{1:k-1},\mathbf{x}_{0:k},\mathbf{a}_{1:k})\\q(\hat{\mathbf{x}}_{0:k-1},\mathbf{y}_{1:k-1}|\mathbf{x}_{0:k-1},\mathbf{a}_{1:k-1})$. Furthermore, when the standard PF requires only the filtered posterior $p(\mathbf{x}_{k}|\mathbf{y}_{1:k})$ at each time step, the sampling density $\widetilde{q}(\mathbf{x}_{k}|\mathbf{x}_{0:k-1},\mathbf{y}_{1:k})$ is commonly assumed to depend only on $\mathbf{x}_{k-1}$ and $\mathbf{y}_{k}$ \cite[Sec.~3.2]{ristic2003beyond}, i.e., $\widetilde{q}(\mathbf{x}_{k}|\mathbf{x}_{0:k-1},\mathbf{y}_{1:k})=\widetilde{q}(\mathbf{x}_{k}|\mathbf{x}_{k-1},\mathbf{y}_{k})$. Similarly, in our I-PF, we assume $q(\hat{\mathbf{x}}_{k},\mathbf{y}_{k}|\hat{\mathbf{x}}_{0:k-1},\mathbf{y}_{1:k-1},\mathbf{x}_{0:k},\mathbf{a}_{1:k})=q(\hat{\mathbf{x}}_{k},\mathbf{y}_{k}|\hat{\mathbf{x}}_{k-1},\mathbf{y}_{k-1},\mathbf{x}_{k},\mathbf{a}_{k})$. Overall, we choose I-PF's sampling density such that $q(\hat{\mathbf{x}}_{0:k},\mathbf{y}_{1:k}|\mathbf{x}_{0:k},\mathbf{a}_{1:k})=q(\hat{\mathbf{x}}_{k},\mathbf{y}_{k}|\hat{\mathbf{x}}_{k-1},\mathbf{y}_{k-1},\mathbf{x}_{k},\mathbf{a}_{k})q(\hat{\mathbf{x}}_{0:k-1},\mathbf{y}_{1:k-1}|\mathbf{x}_{0:k-1},\mathbf{a}_{1:k-1})$, which is \eqref{eqn:ipf q density} of Section~\ref{subsec:ipf formulation}.

\vspace{-8pt}
\section{Proof of Theorem~\ref{thm:ipf convergence}}
\label{app:ipf convergence}
We first restate Lemma~\ref{lemma:sum to power four inequality}-\ref{lemma:indicator inequality} from \cite{hu2008basic}. In Section~\ref{app:preliminaries}, some preliminary results are derived, from which the proof of Theorem~\ref{thm:ipf convergence} follows using mathematical induction.
\begin{lemma}
\label{lemma:sum to power four inequality}
Let $\{\xi_{i}\}_{1\leq i\leq n}$ be conditionally independent random variables given $\sigma$-algebra $\mathcal{G}$ such that $\mathbb{E}[\xi_{i}|\mathcal{G}]=0$ and $\mathbb{E}[|\xi_{i}|^{4}|\mathcal{G}]<\infty$. Then $\mathbb{E}\left[\left|\sum_{i=1}^{n}\xi_{i}\right|^{4}|\mathcal{G}\right]\leq\sum_{i=1}^{n}\mathbb{E}[|\xi_{i}|^{4}|\mathcal{G}]+\left(\sum_{i=1}^{n}\mathbb{E}[|\xi_{i}|^{2}|\mathcal{G}]\right)^{2}$.
\end{lemma}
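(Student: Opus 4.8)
The plan is to condition on $\mathcal{G}$ throughout and to peel off the second summand of the bound \emph{exactly}, reducing the whole lemma to a single conditional variance estimate. Set $S=\sum_{i=1}^{n}\xi_{i}$. Because the $\xi_{i}$ are conditionally independent given $\mathcal{G}$ with $\mathbb{E}[\xi_{i}|\mathcal{G}]=0$, the off-diagonal terms drop, $\mathbb{E}[\xi_{i}\xi_{j}|\mathcal{G}]=\mathbb{E}[\xi_{i}|\mathcal{G}]\mathbb{E}[\xi_{j}|\mathcal{G}]=0$ for $i\neq j$, whence $\mathbb{E}[S^{2}|\mathcal{G}]=\sum_{i=1}^{n}\mathbb{E}[\xi_{i}^{2}|\mathcal{G}]$. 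I would then use the identity
\begin{align}
\mathbb{E}\big[|S|^{4}\big|\mathcal{G}\big]&=\mathrm{Var}\!\left(S^{2}\big|\mathcal{G}\right)+\left(\mathbb{E}[S^{2}|\mathcal{G}]\right)^{2}\nonumber\\
&=\mathrm{Var}\!\left(S^{2}\big|\mathcal{G}\right)+\left(\sum_{i=1}^{n}\mathbb{E}[\xi_{i}^{2}|\mathcal{G}]\right)^{2},\nonumber
\end{align}
which reproduces the squared-sum term of the claimed inequality with coefficient exactly one. It then remains only to prove the single estimate $\mathrm{Var}(S^{2}|\mathcal{G})\leq\sum_{i=1}^{n}\mathbb{E}[\xi_{i}^{4}|\mathcal{G}]$.

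To bound the conditional variance I would split $S^{2}=\sum_{i}\xi_{i}^{2}+2\sum_{i<j}\xi_{i}\xi_{j}$ into a diagonal and an off-diagonal part and compute the variance of each under the conditional law. Conditional independence makes the two parts uncorrelated given $\mathcal{G}$: every cross-covariance pairs a square $\xi_{i}^{2}$ against a product $\xi_{k}\xi_{l}$ with $k\neq l$, leaving at least one index at odd multiplicity, so the mean-zero hypothesis forces it to vanish after conditioning. The diagonal part contributes $\sum_{i}\mathrm{Var}(\xi_{i}^{2}|\mathcal{G})=\sum_{i}\big(\mathbb{E}[\xi_{i}^{4}|\mathcal{G}]-(\mathbb{E}[\xi_{i}^{2}|\mathcal{G}])^{2}\big)$, once more by conditional independence, and this is already at most $\sum_{i}\mathbb{E}[\xi_{i}^{4}|\mathcal{G}]$. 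The task thus reduces to showing that the off-diagonal contribution is absorbed by the leftover slack $\sum_{i}(\mathbb{E}[\xi_{i}^{2}|\mathcal{G}])^{2}$.

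I would keep a direct multinomial expansion $|S|^{4}=\sum_{i,j,k,l}\xi_{i}\xi_{j}\xi_{k}\xi_{l}$ as a cross-check: after conditioning, mean-zero conditional independence annihilates every monomial in which some index has odd multiplicity, so only the \emph{all-equal} monomials $\xi_{i}^{4}$ and the \emph{two-pairs} monomials $\xi_{i}^{2}\xi_{j}^{2}$ ($i\neq j$) survive, the former summing to $\sum_{i}\mathbb{E}[\xi_{i}^{4}|\mathcal{G}]$ and the latter to a weighted sum of the products $\mathbb{E}[\xi_{i}^{2}|\mathcal{G}]\mathbb{E}[\xi_{j}^{2}|\mathcal{G}]$. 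The main obstacle, and the heart of the lemma, is precisely this last bookkeeping step---verifying $\mathrm{Var}(S^{2}|\mathcal{G})\leq\sum_{i}\mathbb{E}[\xi_{i}^{4}|\mathcal{G}]$, i.e.\ that the surviving mixed second-moment terms are controlled by $\sum_{i}(\mathbb{E}[\xi_{i}^{2}|\mathcal{G}])^{2}$ together with the full square $(\sum_{i}\mathbb{E}[\xi_{i}^{2}|\mathcal{G}])^{2}$ produced by the pairing combinatorics. I would discharge it using only the nonnegativity of the conditional moments and a careful accounting of how the pairs distribute across the diagonal and off-diagonal parts; the complex-valued case then follows by applying the same argument to $|S|^{2}=S\bar{S}$.
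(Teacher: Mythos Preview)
Your overall architecture is sound: the identity $\mathbb{E}[|S|^{4}\mid\mathcal{G}]=\mathrm{Var}(S^{2}\mid\mathcal{G})+\bigl(\sum_{i}\mathbb{E}[\xi_{i}^{2}\mid\mathcal{G}]\bigr)^{2}$ is correct and isolates exactly the piece that needs work. However, the step you flag as ``the heart of the lemma'' cannot be completed, because the inequality $\mathrm{Var}(S^{2}\mid\mathcal{G})\le\sum_{i}\mathbb{E}[\xi_{i}^{4}\mid\mathcal{G}]$ is \emph{false} in general. Your own bookkeeping shows why: with $a_{i}=\mathbb{E}[\xi_{i}^{2}\mid\mathcal{G}]$ the off-diagonal part contributes $4\sum_{i<j}a_{i}a_{j}$, while the slack left over from the diagonal part is only $\sum_{i}a_{i}^{2}$; the needed bound $4\sum_{i<j}a_{i}a_{j}\le\sum_{i}a_{i}^{2}$ is equivalent to $2\bigl(\sum_{i}a_{i}\bigr)^{2}\le 3\sum_{i}a_{i}^{2}$, which already fails for $n=2$ with $a_{1}=a_{2}=1$. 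Concretely, take $\xi_{1},\xi_{2}$ independent Rademacher ($\pm1$ with probability $\tfrac12$): then $\mathbb{E}[S^{4}]=8$, whereas the right-hand side of the stated lemma equals $2+4=6$.

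So the gap is not in your strategy but in the statement: as written the inequality is incorrect, and no accounting of pairs will close it. The paper does not supply its own proof---it merely restates the result from \cite{hu2008basic}---so there is nothing to compare against. The direct multinomial expansion you kept as a cross-check gives the exact identity $\mathbb{E}[S^{4}\mid\mathcal{G}]=\sum_{i}\mathbb{E}[\xi_{i}^{4}\mid\mathcal{G}]+3\sum_{i\neq j}a_{i}a_{j}$, hence $\mathbb{E}[S^{4}\mid\mathcal{G}]\le\sum_{i}\mathbb{E}[\xi_{i}^{4}\mid\mathcal{G}]+3\bigl(\sum_{i}a_{i}\bigr)^{2}$. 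This corrected version (with the extra factor $3$ on the second term) is what your argument actually proves, and it suffices unchanged at every place Lemma~\ref{lemma:sum to power four inequality} is invoked in Appendix~\ref{app:ipf convergence}, since only the $N^{-2}$ order is used downstream.
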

\begin{lemma}
\label{lemma:diff to power p inequality}
    If $\mathbb{E}|\xi|^{p}<\infty$, then $\mathbb{E}|\xi-\mathbb{E}[\xi]|^{p}\leq 2^{p}\mathbb{E}|\xi|^{p}$, for any $p\geq 1$.
\end{lemma}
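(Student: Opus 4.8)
The plan is to obtain the bound directly from the triangle inequality in $L^{p}$ (Minkowski's inequality) together with Jensen's inequality, using nothing beyond the standing hypothesis $\mathbb{E}|\xi|^{p}<\infty$. Since $p\geq 1$, this hypothesis guarantees $\xi\in L^{p}\subseteq L^{1}$, so the constant $m\doteq\mathbb{E}[\xi]$ is finite and every term appearing below is well defined and finite.

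First I would write $\xi-m=\xi+(-m)$ and apply Minkowski's inequality for the exponent $p\geq 1$ to obtain
\begin{align*}
\left(\mathbb{E}|\xi-m|^{p}\right)^{1/p}\leq\left(\mathbb{E}|\xi|^{p}\right)^{1/p}+\left(\mathbb{E}|m|^{p}\right)^{1/p}.
\end{align*}
Because $m$ is deterministic, the last term is simply $\left(\mathbb{E}|m|^{p}\right)^{1/p}=|m|=|\mathbb{E}[\xi]|$, so the task reduces to controlling $|\mathbb{E}[\xi]|$ by the right-hand side.

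Next I would bound $|\mathbb{E}[\xi]|$ in two Jensen steps. Applying Jensen to the convex modulus gives $|\mathbb{E}[\xi]|\leq\mathbb{E}|\xi|$, and applying Jensen (equivalently Lyapunov's inequality) to the convex map $t\mapsto t^{p}$ on $[0,\infty)$ gives $\mathbb{E}|\xi|\leq\left(\mathbb{E}|\xi|^{p}\right)^{1/p}$; chaining these yields $|\mathbb{E}[\xi]|\leq\left(\mathbb{E}|\xi|^{p}\right)^{1/p}$. Substituting into the Minkowski bound produces
\begin{align*}
\left(\mathbb{E}|\xi-\mathbb{E}[\xi]|^{p}\right)^{1/p}\leq 2\left(\mathbb{E}|\xi|^{p}\right)^{1/p},
\end{align*}
and raising both sides to the $p$-th power gives exactly the claimed inequality $\mathbb{E}|\xi-\mathbb{E}[\xi]|^{p}\leq 2^{p}\,\mathbb{E}|\xi|^{p}$.

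There is essentially no genuine obstacle here; the only point requiring care is checking that each of Minkowski, Jensen, and Lyapunov is valid across the entire range $p\geq 1$ (they are), and noting that $\mathbb{E}|\xi|^{p}<\infty$ is precisely what renders every quantity finite. As a remark I would mention an equivalent pointwise alternative that avoids Minkowski: convexity of $t\mapsto|t|^{p}$ gives $|\xi-\mathbb{E}[\xi]|^{p}\leq 2^{p-1}\bigl(|\xi|^{p}+|\mathbb{E}[\xi]|^{p}\bigr)$, whence taking expectations and using $|\mathbb{E}[\xi]|^{p}\leq\mathbb{E}|\xi|^{p}$ again yields the constant $2^{p-1}\cdot 2=2^{p}$. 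I would present the Minkowski route as the main argument and keep the convexity route as a one-line alternative.
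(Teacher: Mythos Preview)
Your proof is correct: Minkowski's inequality gives $\bigl(\mathbb{E}|\xi-\mathbb{E}[\xi]|^{p}\bigr)^{1/p}\leq\bigl(\mathbb{E}|\xi|^{p}\bigr)^{1/p}+|\mathbb{E}[\xi]|$, and Jensen/Lyapunov bounds the constant term by $\bigl(\mathbb{E}|\xi|^{p}\bigr)^{1/p}$, yielding the factor $2^{p}$ after raising to the $p$-th power. The paper itself does not supply a proof of this lemma; it is simply restated from \cite{hu2008basic}, so there is nothing to compare against beyond noting that your argument (and the pointwise-convexity alternative you mention) are standard and complete.
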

\begin{lemma}
\label{lemma:sum to max inequality}
    Let $\{\xi_{i}\}_{1\leq i\leq n}$ be conditionally independent random variables given $\sigma$-algebra $\mathcal{G}$ such that $\mathbb{E}[\xi_{i}|\mathcal{G}]=0$ and $\mathbb{E}[|\xi_{i}|^{4}|\mathcal{G}]<\infty$. Then $\mathbb{E}\left[\left|\frac{1}{n}\sum_{i=1}^{n}\xi_{i}\right|^{4}|\mathcal{G}\right]\leq 2\;\textrm{max}_{1\leq i\leq n}\mathbb{E}[|\xi_{i}|^{4}|\mathcal{G}]/n^{2}$.
\end{lemma}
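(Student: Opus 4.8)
The plan is to reduce the claim to the already-stated Lemma~\ref{lemma:sum to power four inequality} by extracting the factor $1/n$ from the fourth power. Since $\mathbb{E}\left[\left|\frac{1}{n}\sum_{i=1}^{n}\xi_{i}\right|^{4}|\mathcal{G}\right]=\frac{1}{n^{4}}\mathbb{E}\left[\left|\sum_{i=1}^{n}\xi_{i}\right|^{4}|\mathcal{G}\right]$, it suffices to bound the numerator by $2n^{2}\max_{1\leq i\leq n}\mathbb{E}[|\xi_{i}|^{4}|\mathcal{G}]$. The $\xi_{i}$ are conditionally independent given $\mathcal{G}$ with $\mathbb{E}[\xi_{i}|\mathcal{G}]=0$ and finite conditional fourth moment, so the hypotheses of Lemma~\ref{lemma:sum to power four inequality} hold and
\begin{align*}
\mathbb{E}\left[\left|\sum_{i=1}^{n}\xi_{i}\right|^{4}|\mathcal{G}\right]\leq\sum_{i=1}^{n}\mathbb{E}[|\xi_{i}|^{4}|\mathcal{G}]+\left(\sum_{i=1}^{n}\mathbb{E}[|\xi_{i}|^{2}|\mathcal{G}]\right)^{2}.
\end{align*}

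Next I would control each of the two terms by $\max_{i}\mathbb{E}[|\xi_{i}|^{4}|\mathcal{G}]$ times a power of $n$. The first sum is bounded by replacing each summand with the maximum, giving $\sum_{i=1}^{n}\mathbb{E}[|\xi_{i}|^{4}|\mathcal{G}]\leq n\max_{i}\mathbb{E}[|\xi_{i}|^{4}|\mathcal{G}]$. The second term requires tying the conditional second moment to the conditional fourth moment: by the conditional Jensen (equivalently Lyapunov) inequality applied to the convex function $t\mapsto t^{2}$, one has $\mathbb{E}[|\xi_{i}|^{2}|\mathcal{G}]\leq\left(\mathbb{E}[|\xi_{i}|^{4}|\mathcal{G}]\right)^{1/2}$ almost surely. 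Summing and again bounding by the maximum yields $\sum_{i=1}^{n}\mathbb{E}[|\xi_{i}|^{2}|\mathcal{G}]\leq n\left(\max_{i}\mathbb{E}[|\xi_{i}|^{4}|\mathcal{G}]\right)^{1/2}$, so squaring gives $\left(\sum_{i=1}^{n}\mathbb{E}[|\xi_{i}|^{2}|\mathcal{G}]\right)^{2}\leq n^{2}\max_{i}\mathbb{E}[|\xi_{i}|^{4}|\mathcal{G}]$.

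Combining the two estimates yields $\mathbb{E}\left[\left|\sum_{i=1}^{n}\xi_{i}\right|^{4}|\mathcal{G}\right]\leq(n+n^{2})\max_{i}\mathbb{E}[|\xi_{i}|^{4}|\mathcal{G}]$, and since $n\leq n^{2}$ for every integer $n\geq 1$, the right-hand side is at most $2n^{2}\max_{i}\mathbb{E}[|\xi_{i}|^{4}|\mathcal{G}]$. Dividing through by $n^{4}$ delivers the stated bound. The argument is largely mechanical once Lemma~\ref{lemma:sum to power four inequality} is invoked; the only steps demanding any care are ensuring the conditional Jensen inequality is applied pointwise (almost surely) so that the successive maximum-and-sum bounds remain valid under the conditional expectation, and the innocuous absorption of the first-order term $n$ into $n^{2}$ that produces the constant $2$.
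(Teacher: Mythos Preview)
Your proof is correct. Note, however, that the paper does not actually prove this lemma: it is one of the auxiliary results simply restated from \cite{hu2008basic} (see the sentence introducing Lemmas~\ref{lemma:sum to power four inequality}--\ref{lemma:indicator inequality}), so there is no in-paper argument to compare against. Your derivation via Lemma~\ref{lemma:sum to power four inequality} together with the conditional Lyapunov/Jensen bound $\mathbb{E}[|\xi_{i}|^{2}|\mathcal{G}]\leq\mathbb{E}[|\xi_{i}|^{4}|\mathcal{G}]^{1/2}$ is exactly the natural route and matches how the result is obtained in the cited source.
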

\begin{lemma}
\label{lemma:indicator inequality}
    Denote $A^{c}$ as the complementary set of a given set $A$. Also, $I_{A}$ denotes the indicator function for a set $A$. Consider a random variable $\eta$ with probability density function $p(x)$ such that $\mathbb{P}(\eta\in A^{c})\leq\epsilon<1$. Define a random variable $\xi$ with probability density function as $\frac{p(x)I_{A}}{\mathbb{P}(A)}$ where $\mathbb{P}(A)=\int p(y)I_{A}dy$. If $\psi$ be a measurable function satisfying $\mathbb{E}[\psi^{2}(\eta)]<\infty$, then $|\mathbb{E}[\psi(\xi)]-\mathbb{E}[\psi(\eta)]|\leq\frac{2\sqrt{\mathbb{E}[\psi^{2}(\eta)]}}{1-\epsilon}\sqrt{\epsilon}$. In the case when $\mathbb{E}|\psi(\eta)|<\infty$, we have $\mathbb{E}|\psi(\xi)|\leq\frac{\mathbb{E}|\psi(\eta)|}{1-\epsilon}$.
\end{lemma}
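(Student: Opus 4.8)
The plan is to observe that $\xi$ is nothing but $\eta$ conditioned on the event $\{\eta\in A\}$, so that directly from the stated density $\frac{p(x)I_{A}}{\mathbb{P}(A)}$ we have $\mathbb{E}[\psi(\xi)]=\mathbb{E}[\psi(\eta)I_{A}]/\mathbb{P}(A)$ and $\mathbb{E}|\psi(\xi)|=\mathbb{E}[|\psi(\eta)|I_{A}]/\mathbb{P}(A)$. The hypothesis $\mathbb{P}(\eta\in A^{c})\le\epsilon<1$ yields $\mathbb{P}(A)=1-\mathbb{P}(A^{c})\ge 1-\epsilon>0$, so every division by $\mathbb{P}(A)$ below is legitimate; I would record this elementary bound first.

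For the first inequality I would rewrite the difference as a covariance against the indicator:
\[
\mathbb{E}[\psi(\xi)]-\mathbb{E}[\psi(\eta)]=\frac{\mathbb{E}[\psi(\eta)I_{A}]-\mathbb{P}(A)\,\mathbb{E}[\psi(\eta)]}{\mathbb{P}(A)}=\frac{\mathbb{E}[\psi(\eta)(I_{A}-\mathbb{P}(A))]}{\mathbb{P}(A)}.
\]
Applying the Cauchy--Schwarz inequality to the numerator gives $|\mathbb{E}[\psi(\eta)(I_{A}-\mathbb{P}(A))]|\le\sqrt{\mathbb{E}[\psi^{2}(\eta)]}\,\sqrt{\mathrm{Var}(I_{A})}$, and since $I_{A}$ is Bernoulli, $\mathrm{Var}(I_{A})=\mathbb{P}(A)\mathbb{P}(A^{c})\le\mathbb{P}(A)\,\epsilon$. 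Dividing by $\mathbb{P}(A)$ and cancelling one factor $\sqrt{\mathbb{P}(A)}$ leaves $|\mathbb{E}[\psi(\xi)]-\mathbb{E}[\psi(\eta)]|\le\sqrt{\mathbb{E}[\psi^{2}(\eta)]}\,\sqrt{\epsilon}/\sqrt{\mathbb{P}(A)}$.

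The remaining step is purely elementary: using $\mathbb{P}(A)\ge 1-\epsilon$ together with $1/\sqrt{1-\epsilon}\le 2/(1-\epsilon)$ (valid because $0\le 1-\epsilon\le 1$ forces $1-\epsilon\le\sqrt{1-\epsilon}$, hence $1/\sqrt{1-\epsilon}\le 1/(1-\epsilon)\le 2/(1-\epsilon)$) produces the claimed constant $2\sqrt{\mathbb{E}[\psi^{2}(\eta)]}\,\sqrt{\epsilon}/(1-\epsilon)$. In fact this route yields the slightly sharper factor $1/\sqrt{1-\epsilon}$, which a fortiori gives the stated bound. The second inequality is then immediate: since $0\le I_{A}\le 1$ we have $\mathbb{E}[|\psi(\eta)|I_{A}]\le\mathbb{E}|\psi(\eta)|$, and dividing by $\mathbb{P}(A)\ge 1-\epsilon$ gives $\mathbb{E}|\psi(\xi)|\le\mathbb{E}|\psi(\eta)|/(1-\epsilon)$.

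I do not anticipate a genuine obstacle here; the only point requiring care is extracting the $\sqrt{\epsilon}$ rate (rather than $\epsilon$) from the covariance term, which hinges on treating $I_{A}-\mathbb{P}(A)$ as a centered Bernoulli variable and invoking Cauchy--Schwarz, after which matching the precise constant $2/(1-\epsilon)$ is a one-line estimate. As a fallback I would keep in reserve the two-term split, bounding $\mathbb{E}[\psi(\eta)I_{A}]\bigl(1/\mathbb{P}(A)-1\bigr)$ and $-\mathbb{E}[\psi(\eta)I_{A^{c}}]$ separately by Cauchy--Schwarz (using $\mathbb{E}[I_{A}]\le 1$ and $\mathbb{E}[I_{A^{c}}]\le\epsilon$); this reproduces the same final constant but is slightly more tedious.
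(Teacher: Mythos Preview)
Your argument is correct. Note, however, that the paper does not actually supply a proof of this lemma: it is explicitly restated from \cite{hu2008basic} (see the sentence introducing Lemmas~\ref{lemma:sum to power four inequality}--\ref{lemma:indicator inequality}), so there is no in-paper proof to compare against.

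As a brief comment on the argument itself: your covariance-against-$I_{A}$ route is clean and, as you already noted, yields the sharper constant $1/\sqrt{1-\epsilon}$ in place of $2/(1-\epsilon)$. The alternative two-term split you keep in reserve---writing $\mathbb{E}[\psi(\xi)]-\mathbb{E}[\psi(\eta)]=\mathbb{E}[\psi(\eta)I_{A}]\bigl(\tfrac{1}{\mathbb{P}(A)}-1\bigr)-\mathbb{E}[\psi(\eta)I_{A^{c}}]$ and bounding each piece by Cauchy--Schwarz with $\mathbb{E}[I_{A}]\le 1$ and $\mathbb{E}[I_{A^{c}}]\le\epsilon$---is the more ``natural'' way to arrive at exactly the constant $2/(1-\epsilon)$ without slack, since it produces two terms each bounded by $\sqrt{\mathbb{E}[\psi^{2}(\eta)]}\,\sqrt{\epsilon}/(1-\epsilon)$. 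Either way the lemma follows; your primary route is slightly sharper and more elegant.
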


\vspace{-8pt}
\subsection{Preliminaries to the proof}\label{app:preliminaries}
 Denote $\mathcal{F}_{k-1}=\sigma\{(\hat{\mathbf{x}}^{i}_{k-1},\mathbf{y}^{i}_{k-1})\;\textrm{for}\;1\leq i\leq N\}$ as the $\sigma$-algebra generated by particles at the previous $(k-1)$-th time step. For Lemma~\ref{lemma:infinite loop}-\ref{lemma:Sampling pow 4 terms}, we assume \eqref{eqn:ipf converge} holds for the previous time instant $(k-1)$. In \eqref{eqn:initialize diff} of Appendix~\ref{app:proof}, it is shown that \eqref{eqn:ipf converge} also holds for $k=0$. Further, we assume
 \par\noindent\small
 \begin{align}
   \mathbb{E}|\langle\pi_{k-1|k-1}^{N},|\phi|^{4}\rangle|\leq M_{k-1|k-1}\|\phi\|^{4}_{k-1,4}.\label{eqn:k-1 pow 4 one}  
 \end{align}
 \normalsize
where $M_{k-1|k-1}>0$ is a constant independent of the number of particles $N$. This inequality is necessary for the proof of the theorem and also proved to hold for all $k\geq 0$ in Section~\ref{app:proof}.
 
\begin{lemma}\label{lemma:infinite loop}
Consider the particles $\{(\hat{\overline{\mathbf{x}}}^{i}_{k},\overline{\mathbf{y}}^{i}_{k})\}$ drawn in the I-PF's importance sampling step. Assume that \eqref{eqn:ipf converge} holds for $(k-1)$-th time instant. Then, for sufficiently large $N$, we have $\mathbb{P}\left(\frac{1}{N}\sum_{i=1}^{N}\beta(\mathbf{a}_{k}|\hat{\overline{\mathbf{x}}}^{i}_{k})<\gamma_{k}|\mathcal{F}_{k-1}\right)<\epsilon_{k}$ for some $0<\epsilon_{k}<1$, which implies that the I-PF algorithm will not run into an infinite loop in steps $1$ and $2$.
\end{lemma}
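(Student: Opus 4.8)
The plan is to show that the empirical prediction functional $S_{N}\doteq\frac{1}{N}\sum_{i=1}^{N}\beta(\mathbf{a}_{k}\mid\hat{\overline{\mathbf{x}}}^{i}_{k})=\langle\widetilde{\pi}^{N}_{k|k-1},\beta\rangle$ concentrates, conditionally on $\mathcal{F}_{k-1}$, around a value that strictly exceeds $\gamma_{k}$, so that the acceptance test in step~2 fails only with small probability. Conditionally on $\mathcal{F}_{k-1}$ the freshly drawn $\overline{\mathbf{y}}^{i}_{k}$ are i.i.d.\ $\rho(\cdot\mid\mathbf{x}_{k})$, so $\hat{\overline{\mathbf{x}}}^{i}_{k}=T(\hat{\mathbf{x}}^{i}_{k-1},\overline{\mathbf{y}}^{i}_{k})$ and the summands $\beta(\mathbf{a}_{k}\mid\hat{\overline{\mathbf{x}}}^{i}_{k})$ are conditionally independent with conditional mean $g(\hat{\mathbf{x}}^{i}_{k-1})$, where $g(\hat{\mathbf{x}})\doteq\int\beta(\mathbf{a}_{k}\mid T(\hat{\mathbf{x}},\mathbf{y}))\,\rho(\mathbf{y}\mid\mathbf{x}_{k})\,d\mathbf{y}$. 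By assumption \textbf{A2}, $B\doteq\|\beta(\mathbf{a}_{k}\mid\cdot)\|_{\infty}<\infty$, hence $0\le g\le B$ and $0\le\beta(\mathbf{a}_{k}\mid\hat{\overline{\mathbf{x}}}^{i}_{k})\le B$ a.s.; in particular $\mathbb{E}[S_{N}\mid\mathcal{F}_{k-1}]=\langle\pi^{N}_{k-1|k-1},g\rangle\doteq\frac{1}{N}\sum_{i}g(\hat{\mathbf{x}}^{i}_{k-1})$.

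Next I would pin down the limiting value by applying the optimal time update \eqref{eqn:optimal filter time update} with $\phi=\beta$: after the $\delta_{T}$-integration, $\delta_{T}\rho\beta$ evaluated at $(\hat{\mathbf{x}}_{k-1},\mathbf{y}_{k-1})$ collapses to $\int\rho(\mathbf{y}_{k}\mid\mathbf{x}_{k})\beta(\mathbf{a}_{k}\mid T(\hat{\mathbf{x}}_{k-1},\mathbf{y}_{k}))\,d\mathbf{y}_{k}=g(\hat{\mathbf{x}}_{k-1})$, so $\langle\pi_{k|k-1},\beta\rangle=\langle\pi_{k-1|k-1},g\rangle$. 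Since $g$ is bounded it trivially satisfies assumption \textbf{A3} at time $k-1$ with $\|g\|_{k-1,4}\le\max\{1,B\}$, so the induction hypothesis \eqref{eqn:ipf converge} at time $k-1$ gives $\mathbb{E}\big|\langle\pi^{N}_{k-1|k-1},g\rangle-\langle\pi_{k-1|k-1},g\rangle\big|^{4}\le C_{k-1|k-1}\max\{1,B^{4}\}/N^{2}$. With $\delta\doteq\langle\pi_{k|k-1},\beta\rangle-\gamma_{k}>0$ (strictly positive by assumption \textbf{A1}), I introduce the event $G_{N}\doteq\{\langle\pi^{N}_{k-1|k-1},g\rangle\ge\gamma_{k}+\delta/2\}\in\mathcal{F}_{k-1}$; Markov's inequality then gives $\mathbb{P}(G_{N}^{c})\le 16\,C_{k-1|k-1}\max\{1,B^{4}\}/(\delta^{4}N^{2})\to0$.

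It remains to bound $\mathbb{P}(S_{N}<\gamma_{k}\mid\mathcal{F}_{k-1})$ on $G_{N}$. Putting $\xi_{i}\doteq\beta(\mathbf{a}_{k}\mid\hat{\overline{\mathbf{x}}}^{i}_{k})-g(\hat{\mathbf{x}}^{i}_{k-1})$, which are conditionally independent given $\mathcal{F}_{k-1}$ with zero conditional mean and $|\xi_{i}|\le B$, Lemma~\ref{lemma:sum to max inequality} yields $\mathbb{E}[|S_{N}-\langle\pi^{N}_{k-1|k-1},g\rangle|^{4}\mid\mathcal{F}_{k-1}]\le 2B^{4}/N^{2}$. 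On $G_{N}$ the conditional mean of $S_{N}$ is at least $\gamma_{k}+\delta/2$, so the conditional Markov inequality gives $\mathbb{P}(S_{N}<\gamma_{k}\mid\mathcal{F}_{k-1})\le\mathbb{P}(|S_{N}-\langle\pi^{N}_{k-1|k-1},g\rangle|\ge\delta/2\mid\mathcal{F}_{k-1})\le 32B^{4}/(\delta^{4}N^{2})$; choosing $N$ large enough that the right-hand side falls below a fixed $\epsilon_{k}\in(0,1)$ establishes the bound, which then holds with $\mathcal{F}_{k-1}$-probability tending to $1$ because $\mathbb{P}(G_{N}^{c})\to0$. For the final assertion, within a single recursion $\mathcal{F}_{k-1}$ is frozen while the successive redraws of $\{\overline{\mathbf{y}}^{i}_{k}\}$ are i.i.d., so the number of passes through steps~1--2 is stochastically dominated by a geometric random variable with success parameter $\mathbb{P}(S_{N}\ge\gamma_{k}\mid\mathcal{F}_{k-1})\ge 1-\epsilon_{k}>0$, hence a.s.\ finite.

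The main obstacle I anticipate is exactly this passage from the \emph{unconditional} fourth-moment estimate supplied by the induction hypothesis to the \emph{conditional} probability $\mathbb{P}(S_{N}<\gamma_{k}\mid\mathcal{F}_{k-1})$ appearing in the statement: a direct conditional application of \eqref{eqn:ipf converge} is unavailable, which is why one first carves out the high-probability event $G_{N}\in\mathcal{F}_{k-1}$ on which the conditional mean $\langle\pi^{N}_{k-1|k-1},g\rangle$ already lies above $\gamma_{k}+\delta/2$, and only then invokes the conditional Hoeffding-type bound of Lemma~\ref{lemma:sum to max inequality}, where boundedness of $\beta$ (assumption \textbf{A2}) is essential to keep the $\xi_{i}$ uniformly bounded.
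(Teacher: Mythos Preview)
Your argument is correct and shares the same core with the paper's proof: apply the induction hypothesis \eqref{eqn:ipf converge} at time $k-1$ to the bounded function $g=\delta_{T}\rho\beta$ and combine with Markov's inequality and assumption~\textbf{A1}. The paper's version is terser: it identifies $\frac{1}{N}\sum_{i}\beta(\mathbf{a}_{k}\mid\hat{\overline{\mathbf{x}}}^{i}_{k})$ with $\langle\pi^{N}_{k-1|k-1},\delta_{T}\rho\beta\rangle$ (effectively replacing $S_{N}$ by its $\mathcal{F}_{k-1}$-conditional mean) and bounds the \emph{unconditional} probability of $\Gamma_{k}^{c}$ in a single Markov step, yielding the explicit form $\epsilon_{k}=C_{\gamma_{k}}\|\beta\|^{4}_{k-1,4}/N^{2}$ that is reused in Lemma~\ref{lemma:Pi123 bounds}. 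Your two-stage decomposition---first controlling the conditional mean via the $\mathcal{F}_{k-1}$-measurable event $G_{N}$, then the conditional fluctuation of $S_{N}$ around that mean via Lemma~\ref{lemma:sum to max inequality}---is a more careful handling of the conditional-versus-unconditional passage that the paper glosses over, and your geometric-waiting-time remark makes the ``no infinite loop'' conclusion explicit. Substantively the two proofs coincide; one small slip in your write-up is that $S_{N}$ equals $\langle\widetilde{\pi}^{N}_{k|k-1},\beta\rangle$ only after the modification step, whereas the lemma concerns the pre-modification particles, but this does not affect the argument.
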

\begin{proof}
Note that $(\hat{\overline{\mathbf{x}}}^{i}_{k},\overline{\mathbf{y}}^{i}_{k})$ are drawn from the distribution $\langle\pi^{N}_{k-1|k-1},\delta_{T}\rho\rangle$ in the importance sampling step such that
\par\noindent\small
\begin{align}
    \mathbb{E}[\phi(\hat{\overline{\mathbf{x}}}^{i}_{k},\overline{\mathbf{y}}^{i}_{k})|\mathcal{F}_{k-1}]=\langle\pi^{N}_{k-1|k-1},\delta_{T}\rho\phi\rangle,\label{eqn:expect of predicted particles}
\end{align}
\normalsize
Further, since $(\hat{\overline{\mathbf{x}}}^{i}_{k},\overline{\mathbf{y}}^{i}_{k})$ have equal weights (because of resampling at each step), we have $\frac{1}{N}\sum_{i=1}^{N}\beta(\mathbf{a}_{k}|\hat{\overline{\mathbf{x}}}^{i}_{k})=\langle\pi^{N}_{k-1|k-1},\delta_{T}\rho\beta\rangle$. Because of the modification, the distribution of $\{(\hat{\widetilde{\mathbf{x}}}^{i}_{k},\widetilde{\mathbf{y}}^{i}_{k})\}$ is obtained from the distribution of $\{(\hat{\overline{\mathbf{x}}}^{i}_{k},\overline{\mathbf{y}}^{i}_{k})\}$ conditioned on the event $\Gamma_{k}\doteq\{\langle\pi^{N}_{k-1|k-1},\delta_{T}\rho\beta\rangle\geq\gamma_{k}\}$. In order to use Lemma~\ref{lemma:indicator inequality} to handle the difference in the empirical distributions of $\{(\hat{\overline{\mathbf{x}}}^{i}_{k},\overline{\mathbf{y}}^{i}_{k})\}$ and $\{(\hat{\widetilde{\mathbf{x}}}^{i}_{k},\widetilde{\mathbf{y}}^{i}_{k})\}$, we require the probability of $\Gamma_{k}^{c}=\{\langle\pi^{N}_{k-1|k-1},\delta_{T}\rho\beta\rangle<\gamma_{k}\}$. Consider
\par\noindent\small
\begin{align*}
    &\mathbb{P}(\langle\pi^{N}_{k-1|k-1},\delta_{T}\rho\beta\rangle<\gamma_{k})=\\
    &\mathbb{P}(\langle\pi^{N}_{k-1|k-1},\delta_{T}\rho\beta\rangle-\langle\pi_{k-1|k-1},\delta_{T}\rho\beta\rangle<\gamma_{k}-\langle\pi_{k-1|k-1},\delta_{T}\rho\beta\rangle)\\
    &\leq \mathbb{P}(|\langle\pi^{N}_{k-1|k-1},\delta_{T}\rho\beta\rangle-\langle\pi_{k-1|k-1},\delta_{T}\rho\beta\rangle|>|\gamma_{k}-\langle\pi_{k-1|k-1},\delta_{T}\rho\beta\rangle|),
\end{align*}
\normalsize
because $\gamma_{k}-\langle\pi_{k-1|k-1},\delta_{T}\rho\beta\rangle<0$ from assumption \textbf{A1}. Using Markov's inequality and then, \eqref{eqn:ipf converge} replacing $k$ by $k-1$ with the indicator function being bounded by $1$ yields
\par\noindent\small
\begin{align*}
    &\mathbb{P}(\langle\pi^{N}_{k-1|k-1},\delta_{T}\rho\beta\rangle<\gamma_{k})\leq \frac{\mathbb{E}|\langle\pi^{N}_{k-1|k-1},\delta_{T}\rho\beta\rangle-\langle\pi_{k-1|k-1},\delta_{T}\rho\beta\rangle|^{4}}{|\gamma_{k}-\langle\pi_{k-1|k-1},\delta_{T}\rho\beta\rangle|^{4}}\\
    &\leq\frac{C_{k-1|k-1}\|\rho\|_{\infty}^{4}}{|\gamma_{k}-\langle\pi_{k-1|k-1},\delta_{T}\rho\beta\rangle|^{4}}\times\frac{\|\beta\|^{4}_{k-1,4}}{N^{2}}.
\end{align*}
\normalsize
Hence, the bound in the lemma will hold for some $0<\epsilon_{k}<1$ for sufficiently large $N$. In the following, we denote $C_{\gamma_{k}}=\frac{C_{k-1|k-1}\|\rho\|_{\infty}^{4}}{|\gamma_{k}-\langle\pi_{k-1|k-1},\delta_{T}\rho\beta\rangle|^{4}}$.
\end{proof}
\begin{lemma}\label{lemma:Pi123 bounds}
Consider the optimal filter's prediction distribution $\pi_{k|k-1}$ and its approximation $\widetilde{\pi}^{N}_{k|k-1}$ obtained in I-PF. Define
\par\noindent\small
\begin{align}
    &\Pi_{1}\doteq\langle\widetilde{\pi}^{N}_{k|k-1},\phi\rangle-\frac{1}{N}\sum_{i=1}^{N}\mathbb{E}[\phi(\hat{\widetilde{\mathbf{x}}}^{i}_{k},\widetilde{\mathbf{y}}^{i}_{k})|\mathcal{F}_{k-1}],\label{eqn:def Pi1}\\      
    &\Pi_{2}\doteq\frac{1}{N}\sum_{i=1}^{N}\mathbb{E}[\phi(\hat{\widetilde{\mathbf{x}}}^{i}_{k},\widetilde{\mathbf{y}}^{i}_{k})|\mathcal{F}_{k-1}]-\langle\pi^{N}_{k-1|k-1},\delta_{T}\rho\phi\rangle,\label{eqn:def Pi2}\\
    &\Pi_{3}\doteq\langle\pi^{N}_{k-1|k-1},\delta_{T}\rho\phi\rangle-\langle\pi_{k|k-1},\phi\rangle.\label{eqn:def Pi3}
\end{align}
\normalsize
Then, $\Pi_{1}$, $\Pi_{2}$ and $\Pi_{3}$ satisfy
\par\noindent\small
\begin{align}
    &\mathbb{E}[|\Pi_{1}|^{4}]\leq C_{\Pi_{1}}\frac{\|\phi\|^{4}_{k-1,4}}{N^{2}},\label{eqn:Pi1 term}\\
    &\mathbb{E}[|\Pi_{2}|^{4}]\leq C_{\Pi_{2}}\frac{\|\phi\|^{4}_{k-1,4}}{N^{2}},\label{eqn:Pi2 term}\\
    &\mathbb{E}[|\Pi_{3}|^{4}]\leq C_{\Pi_{3}}\frac{\|\phi\|^{4}_{k-1,4}}{N^{2}},\label{eqn:Pi3 term}
\end{align}
\normalsize
for suitable constants $C_{\Pi_{1}},C_{\Pi_{2}}$ and $C_{\Pi_{3}}>0$. Note that $\Pi_{1}$, $\Pi_{2}$ and $\Pi_{3}$ are defined for given $\pi_{k|k-1}$ and $\widetilde{\pi}^{N}_{k|k-1}$ while constants $C_{\Pi_{1}},C_{\Pi_{2}}$ and $C_{\Pi_{3}}$ do not dependent on $N$.
\end{lemma}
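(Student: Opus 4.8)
The three quantities sum to the one–step prediction error, $\Pi_{1}+\Pi_{2}+\Pi_{3}=\langle\widetilde{\pi}^{N}_{k|k-1},\phi\rangle-\langle\pi_{k|k-1},\phi\rangle$, decomposed into the Monte-Carlo fluctuation of the freshly drawn particles ($\Pi_{1}$), the bias introduced by the modification/rejection step ($\Pi_{2}$), and the error inherited from $\pi^{N}_{k-1|k-1}\approx\pi_{k-1|k-1}$ carried through the time-update operator ($\Pi_{3}$). The plan is to bound each term separately, conditioning throughout on $\mathcal{F}_{k-1}$ and then taking expectations, and to rely on the induction hypothesis \eqref{eqn:ipf converge} and moment bound \eqref{eqn:k-1 pow 4 one} at time $k-1$ together with Lemmas~\ref{lemma:sum to power four inequality}--\ref{lemma:indicator inequality} and Lemma~\ref{lemma:infinite loop}.

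I would start with $\Pi_{3}$, which is the mildest. By the optimal time-update identity \eqref{eqn:optimal filter time update}, $\langle\pi_{k|k-1},\phi\rangle=\langle\pi_{k-1|k-1},\delta_{T}\rho\phi\rangle$, so $\Pi_{3}=\langle\pi^{N}_{k-1|k-1}-\pi_{k-1|k-1},\,\delta_{T}\rho\phi\rangle$. Applying \eqref{eqn:ipf converge} at time $k-1$ with test function $\psi:=\delta_{T}\rho\phi$ gives $\mathbb{E}|\Pi_{3}|^{4}\le C_{k-1|k-1}\|\delta_{T}\rho\phi\|^{4}_{k-1,4}/N^{2}$, and it then suffices to show $\|\delta_{T}\rho\phi\|_{k-1,4}\le c\,\|\phi\|_{k-1,4}$. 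By Jensen's inequality $|\delta_{T}\rho\phi|^{4}\le\delta_{T}\rho|\phi|^{4}$ (since $\rho(\cdot|\mathbf{x}_{k})$ integrates to one), and using assumption \textbf{A2}, the boundedness of $\rho$, and the time-update identity once more, each $\langle\pi_{s|s},\delta_{T}\rho|\phi|^{4}\rangle$ is finite and controlled by the moments defining $\|\phi\|_{k-1,4}$, model-dependent constants being absorbed into $C_{\Pi_{3}}$. This yields \eqref{eqn:Pi3 term}.

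For $\Pi_{2}$ the point is that the accepted particles $\{(\hat{\widetilde{\mathbf{x}}}^{i}_{k},\widetilde{\mathbf{y}}^{i}_{k})\}$ are the i.i.d. draws $\{(\hat{\overline{\mathbf{x}}}^{i}_{k},\overline{\mathbf{y}}^{i}_{k})\}$ conditioned on $\Gamma_{k}=\{\langle\pi^{N}_{k-1|k-1},\delta_{T}\rho\beta\rangle\ge\gamma_{k}\}$. By exchangeability $\Pi_{2}=\mathbb{E}[\phi(\hat{\widetilde{\mathbf{x}}}^{1}_{k},\widetilde{\mathbf{y}}^{1}_{k})\mid\mathcal{F}_{k-1}]-\mathbb{E}[\phi(\hat{\overline{\mathbf{x}}}^{1}_{k},\overline{\mathbf{y}}^{1}_{k})\mid\mathcal{F}_{k-1}]$, so, conditionally on $\mathcal{F}_{k-1}$, the first inequality of Lemma~\ref{lemma:indicator inequality} (with $\eta$ the vector of pre-modification particles, $A=\Gamma_{k}$, $\epsilon_{k}$ the $\mathcal{O}(N^{-2})$ bound from Lemma~\ref{lemma:infinite loop}, and $\psi$ the first-coordinate evaluation) gives $|\Pi_{2}|\le\frac{2}{1-\epsilon_{k}}\sqrt{\langle\pi^{N}_{k-1|k-1},\delta_{T}\rho\phi^{2}\rangle}\,\sqrt{\epsilon_{k}}$. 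Raising to the fourth power, using $\epsilon_{k}^{2}=\mathcal{O}(N^{-4})$ (from Lemma~\ref{lemma:infinite loop}'s proof), $(\langle\pi^{N}_{k-1|k-1},\delta_{T}\rho\phi^{2}\rangle)^{2}\le\langle\pi^{N}_{k-1|k-1},\delta_{T}\rho|\phi|^{4}\rangle$ by Jensen, and then \eqref{eqn:k-1 pow 4 one} to bound the expectation of the last term by a multiple of $\|\phi\|^{4}_{k-1,4}$, we obtain \eqref{eqn:Pi2 term} with several powers of $N$ to spare.

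Finally, for $\Pi_{1}=\frac1N\sum_{i}\bigl(\phi(\hat{\widetilde{\mathbf{x}}}^{i}_{k},\widetilde{\mathbf{y}}^{i}_{k})-\mathbb{E}[\phi(\hat{\widetilde{\mathbf{x}}}^{i}_{k},\widetilde{\mathbf{y}}^{i}_{k})\mid\mathcal{F}_{k-1}]\bigr)$, I would apply the second inequality of Lemma~\ref{lemma:indicator inequality} conditionally on $\mathcal{F}_{k-1}$ to bound $\mathbb{E}[|\Pi_{1}|^{4}\mid\mathcal{F}_{k-1}]$ by $(1-\epsilon_{k})^{-1}$ times the same quantity formed from the pre-modification i.i.d. particles, re-centred; writing that as the zero-mean conditionally-i.i.d. sum $\frac1N\sum_{i}(\phi(\hat{\overline{\mathbf{x}}}^{i}_{k},\overline{\mathbf{y}}^{i}_{k})-\langle\pi^{N}_{k-1|k-1},\delta_{T}\rho\phi\rangle)$ plus the deterministic shift $-\Pi_{2}$, Lemma~\ref{lemma:sum to max inequality} together with Lemma~\ref{lemma:diff to power p inequality} bounds the sum by $C\langle\pi^{N}_{k-1|k-1},\delta_{T}\rho|\phi|^{4}\rangle/N^{2}$, while the shift is already controlled by \eqref{eqn:Pi2 term}; taking expectations and again invoking \eqref{eqn:k-1 pow 4 one} gives \eqref{eqn:Pi1 term}. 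The main obstacle in all three cases is precisely this recurring estimate: showing that the \emph{propagated} quartic moments $\langle\pi^{N}_{k-1|k-1},\delta_{T}\rho|\phi|^{4}\rangle$ (and the analogue for $\|\delta_{T}\rho\phi\|_{k-1,4}$) stay bounded in expectation, uniformly in $N$, by a constant multiple of $\|\phi\|^{4}_{k-1,4}$ — this is where assumptions \textbf{A2}--\textbf{A3} (boundedness of $\rho,\beta$ and the growth condition on $\phi$) and the inductive moment bound \eqref{eqn:k-1 pow 4 one} are indispensable; the modification step contributes only lower-order corrections because Lemma~\ref{lemma:infinite loop} makes $\epsilon_{k}$ decay like $N^{-2}$.
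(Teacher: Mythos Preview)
Your proposal is correct and follows essentially the same strategy as the paper: the identical three-term decomposition, the same ingredients (the induction hypothesis \eqref{eqn:ipf converge} and moment bound \eqref{eqn:k-1 pow 4 one} at time $k-1$, Lemmas~\ref{lemma:sum to power four inequality}--\ref{lemma:indicator inequality}, and the $\epsilon_{k}=\mathcal{O}(N^{-2})$ rate from Lemma~\ref{lemma:infinite loop}), and effectively the same arguments for $\Pi_{2}$ and $\Pi_{3}$. The only structural difference is in the $\Pi_{1}$ bound: the paper applies Lemma~\ref{lemma:sum to power four inequality} (and Lemma~\ref{lemma:diff to power p inequality}) directly to the accepted particles $\{(\hat{\widetilde{\mathbf{x}}}^{i}_{k},\widetilde{\mathbf{y}}^{i}_{k})\}$ and \emph{afterwards} uses Lemma~\ref{lemma:indicator inequality} termwise on the resulting conditional moments to pass to the $\overline{\cdot}$ particles, whereas you first apply Lemma~\ref{lemma:indicator inequality} to $|\Pi_{1}|^{4}$ as a whole, land on the genuinely conditionally i.i.d.\ pre-modification particles, and then split into a centred sum (handled by Lemma~\ref{lemma:sum to max inequality}) plus the $\mathcal{F}_{k-1}$-measurable shift $-\Pi_{2}$. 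Your ordering has the mild advantage of never needing to treat the post-rejection particles as conditionally independent (they are only exchangeable after conditioning on the joint event $\Gamma_{k}$), but the two routes lead to the same $\mathcal{O}(N^{-2})$ bound with constants of the same form.
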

\begin{proof}
\textbf{(a) $\Pi_{1}$ term:} Recall that $\widetilde{\pi}^{N}_{k|k-1}$ is the empirical distribution obtained from particles $\{(\hat{\widetilde{\mathbf{x}}}^{i}_{k},\widetilde{\mathbf{y}}^{i}_{k})\}$. Hence,
\par\noindent\small
\begin{align*}
    &\mathbb{E}[|\Pi_{1}|^{4}|\mathcal{F}_{k-1}]\\
    &=\mathbb{E}\left[\left|\frac{1}{N}\sum_{i=1}^{N}(\phi(\hat{\widetilde{\mathbf{x}}}^{i}_{k},\widetilde{\mathbf{y}}^{i}_{k})-\mathbb{E}[\phi(\hat{\widetilde{\mathbf{x}}}^{i}_{k},\widetilde{\mathbf{y}}^{i}_{k})|\mathcal{F}_{k-1}])\right|^{4}|\mathcal{F}_{k-1}\right]\\
    &\leq\frac{1}{N^{4}}\sum_{i=1}^{N}\mathbb{E}[|\phi(\hat{\widetilde{\mathbf{x}}}^{i}_{k},\widetilde{\mathbf{y}}^{i}_{k})-\mathbb{E}[\phi(\hat{\widetilde{\mathbf{x}}}^{i}_{k},\widetilde{\mathbf{y}}^{i}_{k})|\mathcal{F}_{k-1}]|^{4}|\mathcal{F}_{k-1}]\\
    &+\frac{1}{N^{4}}\left(\sum_{i=1}^{N}\mathbb{E}[|\phi(\hat{\widetilde{\mathbf{x}}}^{i}_{k},\widetilde{\mathbf{y}}^{i}_{k})-\mathbb{E}[\phi(\hat{\widetilde{\mathbf{x}}}^{i}_{k},\widetilde{\mathbf{y}}^{i}_{k})|\mathcal{F}_{k-1}]|^{2}|\mathcal{F}_{k-1}]\right)^{2}\leq\frac{1}{N^{4}}\\
    &\hspace{-0.5cm}\times\left(\sum_{i=1}^{N}2^{4}\mathbb{E}[|\phi(\hat{\widetilde{\mathbf{x}}}^{i}_{k},\widetilde{\mathbf{y}}^{i}_{k})|^{4}|\mathcal{F}_{k-1}]+\left(\sum_{i=1}^{N}2^{2}\mathbb{E}[|\phi(\hat{\widetilde{\mathbf{x}}}^{i}_{k},\widetilde{\mathbf{y}}^{i}_{k})|^{2}|\mathcal{F}_{k-1}]\right)^{2}\right),
\end{align*}
\normalsize
where the first and second inequalities are obtained using Lemma~\ref{lemma:sum to power four inequality} and \ref{lemma:diff to power p inequality}, respectively. Since $(\hat{\overline{\mathbf{x}}}^{i}_{k},\overline{\mathbf{y}}^{i}_{k})$ are obtained from $(\hat{\widetilde{\mathbf{x}}}^{i}_{k},\widetilde{\mathbf{y}}^{i}_{k})$ such that $\Gamma_{k}$ (defined in Lemma~\ref{lemma:infinite loop}) occurs and $\mathbb{P}(\Gamma_{k}^{c})<\epsilon_{k}$, using Lemma~\ref{lemma:indicator inequality} yields
\par\noindent\small
\begin{align*}
    &\mathbb{E}[|\Pi_{1}|^{4}|\mathcal{F}_{k-1}]\leq\frac{2^{4}}{N^{4}}\\
    &\times\left(\sum_{i=1}^{N}\frac{\mathbb{E}[|\phi(\hat{\overline{\mathbf{x}}}^{i}_{k},\overline{\mathbf{y}}^{i}_{k})|^{4}|\mathcal{F}_{k-1}]}{1-\epsilon_{k}}+\left(\sum_{i=1}^{N}\frac{\mathbb{E}[|\phi(\hat{\overline{\mathbf{x}}}^{i}_{k},\overline{\mathbf{y}}^{i}_{k})|^{2}|\mathcal{F}_{k-1}]}{1-\epsilon_{k}}\right)^{2}\right)\\
    &=\frac{2^{4}}{N^{4}(1-\epsilon_{k})^{2}}\sum_{i=1}^{N}(1-\epsilon_{k})\mathbb{E}[|\phi(\hat{\overline{\mathbf{x}}}^{i}_{k},\overline{\mathbf{y}}^{i}_{k})|^{4}|\mathcal{F}_{k-1}]\\
    &+\frac{2^{4}}{N^{4}(1-\epsilon_{k})^{2}}\left(\sum_{i=1}^{N}\mathbb{E}[|\phi(\hat{\overline{\mathbf{x}}}^{i}_{k},\overline{\mathbf{y}}^{i}_{k})|^{2}|\mathcal{F}_{k-1}]\right)^{2}\leq\frac{2^{4}}{N^{4}(1-\epsilon_{k})^{2}}\\
    &\times\left(\sum_{i=1}^{N}\mathbb{E}[|\phi(\hat{\overline{\mathbf{x}}}^{i}_{k},\overline{\mathbf{y}}^{i}_{k})|^{4}|\mathcal{F}_{k-1}]+\left(\sum_{i=1}^{N}\mathbb{E}[|\phi(\hat{\overline{\mathbf{x}}}^{i}_{k},\overline{\mathbf{y}}^{i}_{k})|^{2}|\mathcal{F}_{k-1}]\right)^{2}\right),
\end{align*}
\normalsize
where the last inequality is obtained using $1-\epsilon_{k}<1$. Expressing the expectations as in \eqref{eqn:expect of predicted particles}, we have
\par\noindent\small
\begin{align*}
    &\mathbb{E}[|\Pi_{1}|^{4}|\mathcal{F}_{k-1}]\leq\frac{2^{4}}{N^{4}(1-\epsilon_{k})^{2}}\\
    &\times\left(\sum_{i=1}^{N}\langle\pi_{k-1|k-1}^{N},\delta_{T}\rho|\phi|^{4}\rangle+(\sum_{i=1}^{N}\langle\pi_{k-1|k-1}^{N},\delta_{T}\rho|\phi|^{2}\rangle)^{2}\right)\\
    &=\frac{2^{4}}{(1-\epsilon_{k})^{2}}\left(\frac{\langle\pi_{k-1|k-1}^{N},\delta_{T}\rho|\phi|^{4}\rangle}{N^{3}}+\frac{\langle\pi_{k-1|k-1}^{N},\delta_{T}\rho|\phi|^{2}\rangle^{2}}{N^{2}}\right)\\
    &\leq\frac{2^{4}}{(1-\epsilon_{k})^{2}}\left(\frac{\langle\pi_{k-1|k-1}^{N},\delta_{T}\rho|\phi|^{4}\rangle}{N^{3}}+\frac{\langle\pi_{k-1|k-1}^{N},\delta_{T}\rho|\phi|^{4}\rangle}{N^{2}}\right)\\
    &\leq\frac{2^{5}}{(1-\epsilon_{k})^{2}}\frac{\langle\pi_{k-1|k-1}^{N},\delta_{T}\rho|\phi|^{4}\rangle}{N^{2}},
\end{align*}
\normalsize
where the second last and last inequalities result from Jensen's inequality and $N>1$, respectively. Finally, using \eqref{eqn:k-1 pow 4 one} with the indicator function being bounded by $1$, we have $\mathbb{E}[|\Pi_{1}|^{4}]\leq\frac{2^{5}}{(1-\epsilon_{k})^{2}}\|\rho\|_{\infty} M_{k-1|k-1}\frac{\|\phi\|^{4}_{k-1,4}}{N^{2}}$ which gives \eqref{eqn:Pi1 term} with $C_{\Pi_{1}}\doteq\frac{2^{5}}{(1-\epsilon_{k})^{2}}\|\rho\|_{\infty} M_{k-1|k-1}$.

\textbf{(b) $\Pi_{2}$ term:} Using \eqref{eqn:expect of predicted particles}, we have
\par\noindent\small
\begin{align*}
    |\Pi_{2}|^{4}&=\left|\frac{1}{N}\sum_{i=1}^{N}(\mathbb{E}[\phi(\hat{\widetilde{\mathbf{x}}}^{i}_{k},\widetilde{\mathbf{y}}^{i}_{k})|\mathcal{F}_{k-1}]-\mathbb{E}[\phi(\hat{\overline{\mathbf{x}}}^{i}_{k},\overline{\mathbf{y}}^{i}_{k})|\mathcal{F}_{k-1}])\right|^{4}\\
    &\leq\frac{1}{N}\sum_{i=1}^{N}|\mathbb{E}[\phi(\hat{\widetilde{\mathbf{x}}}^{i}_{k},\widetilde{\mathbf{y}}^{i}_{k})|\mathcal{F}_{k-1}]-\mathbb{E}[\phi(\hat{\overline{\mathbf{x}}}^{i}_{k},\overline{\mathbf{y}}^{i}_{k})|\mathcal{F}_{k-1}]|^{4},
\end{align*}
\normalsize
from the Jensen's inequality. As mentioned earlier $\{(\hat{\overline{\mathbf{x}}}^{i}_{k},\overline{\mathbf{y}}^{i}_{k})\}$ are obtained from $\{(\hat{\widetilde{\mathbf{x}}}^{i}_{k},\widetilde{\mathbf{y}}^{i}_{k})\}$ such that $\Gamma_{k}$ occurs such that using Lemma~\ref{lemma:indicator inequality} and then Jensen's inequality, we obtain
\par\noindent\small
\begin{align*}
    |\Pi_{2}|^{4}&\leq\frac{1}{N}\sum_{i=1}^{N}\left(\frac{2\sqrt{\mathbb{E}[\phi(\hat{\overline{\mathbf{x}}}^{i}_{k},\overline{\mathbf{y}}^{i}_{k})^{2}|\mathcal{F}_{k-1}]}}{1-\epsilon_{k}}\sqrt{\epsilon_{k}}\right)^{4}\\
    &\leq\frac{2^{4}\epsilon_{k}^{2}}{N(1-\epsilon_{k})^{4}}\sum_{i=1}^{N}\mathbb{E}[\phi(\hat{\overline{\mathbf{x}}}^{i}_{k},\overline{\mathbf{y}}^{i}_{k})^{4}|\mathcal{F}_{k-1}].
\end{align*}
\normalsize
Substituting $\epsilon_{k}=C_{\gamma_{k}}\|\beta\|^{4}_{k-1,4}/N^{2}$ (from Lemma~\ref{lemma:infinite loop}) and \eqref{eqn:expect of predicted particles}, we have $|\Pi_{2}|^{4}\leq\frac{2^{4}}{(1-\epsilon_{k})^{4}}C_{\gamma_{k}}^{2}\frac{\|\beta\|^{8}_{k-1,4}}{N^{4}}\langle\pi^{N}_{k-1|k-1},\delta_{T}\rho\phi^{4}\rangle$. Denote $C'_{\Pi_{2}}=2^{4}C_{\gamma_{k}}^{2}\|\beta\|^{8}_{k-1,4}/(1-\epsilon_{k})^{4}$. Finally, using \eqref{eqn:k-1 pow 4 one} with the indicator function being bounded by $1$, we have $\mathbb{E}[|\Pi_{2}|^{4}]\leq C'_{\Pi_{2}}\frac{\mathbb{E}|\langle\pi^{N}_{k-1|k-1},\delta_{T}\rho\phi^{4}\rangle|}{N^{4}}\leq C'_{\Pi_{2}}M_{k-1|k-1}\|\rho\|_{\infty}\frac{\|\phi\|^{4}_{k-1,4}}{N^{4}}$ which yields \eqref{eqn:Pi2 term} using $N>1$ and $C_{\Pi_{2}}=C'_{\Pi_{2}}M_{k-1|k-1}\|\rho\|_{\infty}$.

\textbf{(c) $\Pi_{3}$ term:} Using \eqref{eqn:optimal filter time update} and \eqref{eqn:ipf converge} replacing $k$ by $k-1$,
\par\noindent\small
\begin{align*}
    &\mathbb{E}[|\Pi_{3}|^{4}]=\mathbb{E}|\langle\pi^{N}_{k-1|k-1},\delta_{T}\rho\phi\rangle-\langle\pi_{k|k-1},\phi\rangle|^{4}\\
    &=\mathbb{E}|\langle\pi^{N}_{k-1|k-1},\delta_{T}\rho\phi\rangle-\langle\pi_{k-1|k-1},\delta_{T}\rho\phi\rangle|^{4}\\
    &\leq C_{k-1|k-1}\|\rho\|_{\infty}^{4}\frac{\|\phi\|^{4}_{k-1,4}}{N^{2}},
\end{align*}
\normalsize
which yields \eqref{eqn:Pi3 term} defining $C_{\Pi_{3}}\doteq C_{k-1|k-1}\|\rho\|_{\infty}^{4}$.
\end{proof}
\begin{lemma}\label{lemma:Sampling pow 4 terms}
If \eqref{eqn:k-1 pow 4 one} holds, then
\par\noindent\small
\begin{align}
    &\mathbb{E}\left|\langle\widetilde{\pi}^{N}_{k|k-1},|\phi|^{4}\rangle-\frac{1}{N}\sum_{i=1}^{N}\mathbb{E}[|\phi(\hat{\widetilde{\mathbf{x}}}^{i}_{k},\widetilde{\mathbf{y}}^{i}_{k})|^{4}|\mathcal{F}_{k-1}]\right|\nonumber\\
    &\leq\frac{2}{(1-\epsilon_{k})}M_{k-1|k-1}\|\rho\|_{\infty}\|\phi\|^{4}_{k-1,4},\label{eqn:T1 predict pow 4}\\
    &\mathbb{E}\left|\frac{1}{N}\sum_{i=1}^{N}\mathbb{E}[|\phi(\hat{\widetilde{\mathbf{x}}}^{i}_{k},\widetilde{\mathbf{y}}^{i}_{k})|^{4}|\mathcal{F}_{k-1}]-\langle\pi^{N}_{k-1|k-1},\delta_{T}\rho|\phi|^{4}\rangle\right|\nonumber\\
    &\leq\frac{2-\epsilon_{k}}{1-\epsilon_{k}}M_{k-1|k-1}\|\rho\|_{\infty}\|\phi\|^{4}_{k-1,4}\label{eqn:T2 predict pow 4}
    \end{align}
    \begin{align}
    &\hspace{-0.5cm}\mathbb{E}|\langle\pi^{N}_{k-1|k-1},\delta_{T}\rho|\phi|^{4}\rangle-\langle\pi_{k|k-1},|\phi|^{4}\rangle|\nonumber\\
    &\;\;\;\leq \|\rho\|_{\infty}(M_{k-1|k-1}+1)\|\phi\|^{4}_{k-1,4}\label{eqn:T3 predict pow 4},
\end{align}
\normalsize
where $\epsilon_{k}$ is the same as defined in Lemma~\ref{lemma:infinite loop}.
\end{lemma}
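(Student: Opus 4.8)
The plan is to treat \eqref{eqn:T1 predict pow 4}--\eqref{eqn:T3 predict pow 4} as the analogues of the $\Pi_{1},\Pi_{2},\Pi_{3}$ bounds of Lemma~\ref{lemma:Pi123 bounds}, obtained by replacing $\phi$ with the nonnegative functional $|\phi|^{4}$ and asking only for $L^{1}$ (rather than $L^{4}$) control of the errors. Because only $L^{1}$ control is needed, the triangle inequality does the work of the fourth-moment inequalities (Lemmas~\ref{lemma:sum to power four inequality} and \ref{lemma:sum to max inequality}), and Lemma~\ref{lemma:diff to power p inequality} is needed only with $p=1$. Two ingredients recur throughout: the identity \eqref{eqn:expect of predicted particles} with $\phi$ replaced by $|\phi|^{4}$, which gives $\frac{1}{N}\sum_{i}\mathbb{E}[|\phi(\hat{\overline{\mathbf{x}}}^{i}_{k},\overline{\mathbf{y}}^{i}_{k})|^{4}|\mathcal{F}_{k-1}]=\langle\pi^{N}_{k-1|k-1},\delta_{T}\rho|\phi|^{4}\rangle$; and the estimate $\mathbb{E}\langle\pi^{N}_{k-1|k-1},\delta_{T}\rho|\phi|^{4}\rangle\le\|\rho\|_{\infty}M_{k-1|k-1}\|\phi\|^{4}_{k-1,4}$, obtained exactly as in the $\Pi_{1}$ computation by bounding $\rho\le\|\rho\|_{\infty}$ and invoking the induction hypothesis \eqref{eqn:k-1 pow 4 one} (the ``indicator bounded by $1$'' step). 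I would also keep, from Lemma~\ref{lemma:infinite loop}, the modification-failure probability $\epsilon_{k}<1$ for $N$ large enough.

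For \eqref{eqn:T1 predict pow 4}: since $\widetilde{\pi}^{N}_{k|k-1}$ is the empirical measure of $\{(\hat{\widetilde{\mathbf{x}}}^{i}_{k},\widetilde{\mathbf{y}}^{i}_{k})\}$, the bracketed quantity is $\frac{1}{N}\sum_{i}\big(|\phi(\hat{\widetilde{\mathbf{x}}}^{i}_{k},\widetilde{\mathbf{y}}^{i}_{k})|^{4}-\mathbb{E}[\,|\phi(\hat{\widetilde{\mathbf{x}}}^{i}_{k},\widetilde{\mathbf{y}}^{i}_{k})|^{4}\,|\,\mathcal{F}_{k-1}]\big)$. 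Conditioning on $\mathcal{F}_{k-1}$ and using $|a-\mathbb{E}[a|\mathcal{F}_{k-1}]|\le a+\mathbb{E}[a|\mathcal{F}_{k-1}]$ for $a\ge 0$ bounds the conditional expectation of the absolute value by $\frac{2}{N}\sum_{i}\mathbb{E}[|\phi(\hat{\widetilde{\mathbf{x}}}^{i}_{k},\widetilde{\mathbf{y}}^{i}_{k})|^{4}|\mathcal{F}_{k-1}]$. Because the modified particles are the unmodified ones conditioned on the event $\Gamma_{k}$ with $\mathbb{P}(\Gamma_{k}^{c})<\epsilon_{k}$, the second statement of Lemma~\ref{lemma:indicator inequality} turns this into $\frac{2}{1-\epsilon_{k}}\,\mathbb{E}[|\phi(\hat{\overline{\mathbf{x}}}^{i}_{k},\overline{\mathbf{y}}^{i}_{k})|^{4}|\mathcal{F}_{k-1}]=\frac{2}{1-\epsilon_{k}}\langle\pi^{N}_{k-1|k-1},\delta_{T}\rho|\phi|^{4}\rangle$; taking the outer expectation and applying the recurring estimate gives the stated constant $\frac{2}{1-\epsilon_{k}}M_{k-1|k-1}\|\rho\|_{\infty}$.

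For \eqref{eqn:T2 predict pow 4} and \eqref{eqn:T3 predict pow 4} I would exploit that both quantities being compared are nonnegative, so $|a-b|\le a+b$. In \eqref{eqn:T2 predict pow 4}, $a=\frac{1}{N}\sum_{i}\mathbb{E}[|\phi(\hat{\widetilde{\mathbf{x}}}^{i}_{k},\widetilde{\mathbf{y}}^{i}_{k})|^{4}|\mathcal{F}_{k-1}]$ and $b=\langle\pi^{N}_{k-1|k-1},\delta_{T}\rho|\phi|^{4}\rangle$, which equals $\frac{1}{N}\sum_{i}\mathbb{E}[|\phi(\hat{\overline{\mathbf{x}}}^{i}_{k},\overline{\mathbf{y}}^{i}_{k})|^{4}|\mathcal{F}_{k-1}]$ by \eqref{eqn:expect of predicted particles}; Lemma~\ref{lemma:indicator inequality} gives $a\le b/(1-\epsilon_{k})$, so $a+b\le\frac{2-\epsilon_{k}}{1-\epsilon_{k}}\,b$, and the recurring estimate finishes it. In \eqref{eqn:T3 predict pow 4}, the optimal time update \eqref{eqn:optimal filter time update} (with $\phi$ replaced by $|\phi|^{4}$) gives $\langle\pi_{k|k-1},|\phi|^{4}\rangle=\langle\pi_{k-1|k-1},\delta_{T}\rho|\phi|^{4}\rangle$, so the left-hand side is $\mathbb{E}|\langle\pi^{N}_{k-1|k-1},\delta_{T}\rho|\phi|^{4}\rangle-\langle\pi_{k-1|k-1},\delta_{T}\rho|\phi|^{4}\rangle|$; bound the random term by $\|\rho\|_{\infty}M_{k-1|k-1}\|\phi\|^{4}_{k-1,4}$ via the recurring estimate, and the deterministic term by $\|\rho\|_{\infty}\langle\pi_{k-1|k-1},|\phi|^{4}\rangle\le\|\rho\|_{\infty}\|\phi\|^{4}_{k-1,4}$ using the definition of $\|\phi\|_{k-1,4}$; adding them yields $\|\rho\|_{\infty}(M_{k-1|k-1}+1)\|\phi\|^{4}_{k-1,4}$.

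The one step that is not pure bookkeeping is the recurring estimate itself, $\mathbb{E}\langle\pi^{N}_{k-1|k-1},\delta_{T}\rho|\phi|^{4}\rangle\le\|\rho\|_{\infty}M_{k-1|k-1}\|\phi\|^{4}_{k-1,4}$: it requires absorbing the Dirac $\delta_{T}$ — i.e., the deterministic push-forward through the forward filter $T$ — together with the likelihood $\rho$, and landing back within the scope of the induction hypothesis \eqref{eqn:k-1 pow 4 one}. This is precisely the ``indicator bounded by $1$'' reduction already invoked for the $\Pi_{1}$ term in the proof of Lemma~\ref{lemma:Pi123 bounds}, and it is where assumption \textbf{A2} (boundedness of $\rho$) enters, together with the fact that resampling/modification leaves all particle weights equal to $1/N$. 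The remaining care is purely in tracking which $\sigma$-algebra each expectation is conditioned on and in noting that Lemma~\ref{lemma:infinite loop} makes $1/(1-\epsilon_{k})$ and $(2-\epsilon_{k})/(1-\epsilon_{k})$ finite for $N$ large enough.
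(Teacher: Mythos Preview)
Your proposal is correct and follows essentially the same route as the paper's own proof: the same three-term decomposition, the same use of Lemma~\ref{lemma:diff to power p inequality} with $p=1$ (your pointwise $|a-\mathbb{E}[a|\mathcal{F}_{k-1}]|\le a+\mathbb{E}[a|\mathcal{F}_{k-1}]$ for $a\ge 0$), the same application of the second statement of Lemma~\ref{lemma:indicator inequality} to pass from the $\widetilde{\,\cdot\,}$ to the $\overline{\,\cdot\,}$ particles, and the same ``recurring estimate'' (bounding $\rho\le\|\rho\|_\infty$ and invoking \eqref{eqn:k-1 pow 4 one}) to close each term. The only cosmetic difference is that the paper cites Lemma~\ref{lemma:diff to power p inequality} explicitly where you unwind it as a triangle inequality for nonnegative terms.
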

\begin{proof}
Consider the first inequality \eqref{eqn:T1 predict pow 4}. Since $\widetilde{\pi}^{N}_{k|k-1}$ is the empirical distribution obtained from particles $\{(\hat{\widetilde{\mathbf{x}}}^{i}_{k},\widetilde{\mathbf{y}}^{i}_{k})\}$, we have
\par\noindent\small
\begin{align*}
    &\mathbb{E}\left[\mathbb{E}\left|\langle\widetilde{\pi}^{N}_{k|k-1},|\phi|^{4}\rangle-\frac{1}{N}\sum_{i=1}^{N}\mathbb{E}[|\phi(\hat{\widetilde{\mathbf{x}}}^{i}_{k},\widetilde{\mathbf{y}}^{i}_{k})|^{4}|\mathcal{F}_{k-1}]\right||\mathcal{F}_{k-1}\right]\\
    &=\frac{1}{N}\mathbb{E}\left[\mathbb{E}\left|\sum_{i=1}^{N}(|\phi(\hat{\widetilde{\mathbf{x}}}^{i}_{k},\widetilde{\mathbf{y}}^{i}_{k})|^{4}-\mathbb{E}[|\phi(\hat{\widetilde{\mathbf{x}}}^{i}_{k},\widetilde{\mathbf{y}}^{i}_{k})|^{4}|\mathcal{F}_{k-1}])\right||\mathcal{F}_{k-1}\right]\nonumber\\
    &\leq\frac{2}{N}\mathbb{E}[\sum_{i=1}^{N}\mathbb{E}[|\phi(\hat{\widetilde{\mathbf{x}}}^{i}_{k},\widetilde{\mathbf{y}}^{i}_{k})|^{4}|\mathcal{F}_{k-1}]]\leq\frac{2}{N(1-\epsilon_{k})}\\
    &\times\mathbb{E}[\sum_{i=1}^{N}\mathbb{E}[|\phi(\hat{\overline{\mathbf{x}}}^{i}_{k},\overline{\mathbf{y}}^{i}_{k})|^{4}|\mathcal{F}_{k-1}]]=\frac{2}{(1-\epsilon_{k})}\mathbb{E}[\langle\pi^{N}_{k-1|k-1},\delta_{T}\rho|\phi|^{4}\rangle],
\end{align*}
\normalsize
where the first and second inequalities are obtained using Lemma~\ref{lemma:diff to power p inequality} and \ref{lemma:indicator inequality}, respectively. The last equality follows because particles $\{(\hat{\overline{\mathbf{x}}}^{i}_{k},\overline{\mathbf{y}}^{i}_{k})\}$ are drawn from distribution $\langle\pi^{N}_{k-1|k-1},\delta_{T}\rho\rangle$. Finally, using \eqref{eqn:k-1 pow 4 one}, we obtain \eqref{eqn:T1 predict pow 4}.

Next, we consider the inequality \eqref{eqn:T2 predict pow 4}. Using \eqref{eqn:expect of predicted particles} replacing $\phi$ by $|\phi|^{4}$, we have
\par\noindent\small
\begin{align*}
    &\mathbb{E}\left|\frac{1}{N}\sum_{i=1}^{N}\mathbb{E}[|\phi(\hat{\widetilde{\mathbf{x}}}^{i}_{k},\widetilde{\mathbf{y}}^{i}_{k})|^{4}|\mathcal{F}_{k-1}]-\langle\pi^{N}_{k-1|k-1},\delta_{T}\rho|\phi|^{4}\rangle\right|\\
    &=\mathbb{E}\left|\frac{1}{N}\sum_{i=1}^{N}(\mathbb{E}[|\phi(\hat{\widetilde{\mathbf{x}}}^{i}_{k},\widetilde{\mathbf{y}}^{i}_{k})|^{4}|\mathcal{F}_{k-1}]-\mathbb{E}[|\phi(\hat{\overline{\mathbf{x}}}^{i}_{k},\overline{\mathbf{y}}^{i}_{k})|^{4}|\mathcal{F}_{k-1}])\right|\\
    &\leq\frac{1}{N}\sum_{i=1}^{N}\left(\mathbb{E}[\mathbb{E}[|\phi(\hat{\widetilde{\mathbf{x}}}^{i}_{k},\widetilde{\mathbf{y}}^{i}_{k})|^{4}|\mathcal{F}_{k-1}]]+\mathbb{E}[\mathbb{E}[|\phi(\hat{\overline{\mathbf{x}}}^{i}_{k},\overline{\mathbf{y}}^{i}_{k})|^{4}|\mathcal{F}_{k-1}]]\right)\nonumber.
\end{align*}
\normalsize
Now, using Lemma~\ref{lemma:indicator inequality}, we have
\par\noindent\small
\begin{align*}       
    &\mathbb{E}\left|\frac{1}{N}\sum_{i=1}^{N}\mathbb{E}[|\phi(\hat{\widetilde{\mathbf{x}}}^{i}_{k},\widetilde{\mathbf{y}}^{i}_{k})|^{4}|\mathcal{F}_{k-1}]-\langle\pi^{N}_{k-1|k-1},\delta_{T}\rho|\phi|^{4}\rangle\right|\\
    &\leq\frac{1}{N}\sum_{i=1}^{N}\left(\frac{\mathbb{E}[\mathbb{E}[|\phi(\hat{\overline{\mathbf{x}}}^{i}_{k},\overline{\mathbf{y}}^{i}_{k})|^{4}|\mathcal{F}_{k-1}]]}{1-\epsilon_{k}}+\mathbb{E}[\mathbb{E}[|\phi(\hat{\overline{\mathbf{x}}}^{i}_{k},\overline{\mathbf{y}}^{i}_{k})|^{4}|\mathcal{F}_{k-1}]]\right)\\
    &=\frac{2-\epsilon_{k}}{1-\epsilon_{k}}\mathbb{E}[\langle\pi^{N}_{k-1|k-1},\delta_{T}\rho|\phi|^{4}\rangle],\nonumber
\end{align*}
\normalsize
which yields \eqref{eqn:T2 predict pow 4} using assumption \eqref{eqn:k-1 pow 4 one}.

Finally, consider the last inequality \eqref{eqn:T3 predict pow 4}. From \eqref{eqn:optimal filter time update} and triangle inequality, we have
\par\noindent\small
\begin{align*}
    &\mathbb{E}|\langle\pi^{N}_{k-1|k-1},\delta_{T}\rho|\phi|^{4}\rangle-\langle\pi_{k|k-1},|\phi|^{4}\rangle|\leq\mathbb{E}|\langle\pi^{N}_{k-1|k-1},\delta_{T}\rho|\phi|^{4}\rangle|\\
    &\hspace{-0.5cm}+|\langle\pi_{k-1|k-1},\delta_{T}\rho|\phi|^{4}\rangle|\leq\|\rho\|_{\infty}M_{k-1|k-1}\|\phi\|^{4}_{k-1,4}+\|\rho\|_{\infty}\|\phi\|^{4}_{k-1,4},
\end{align*}
\normalsize
where the last inequality results from the assumption \eqref{eqn:k-1 pow 4 one} and the definition of $\|\phi\|_{k-1,4}$ (from Theorem~\ref{thm:ipf convergence}). Hence, \eqref{eqn:T3 predict pow 4} is proved.
\end{proof}

\vspace{-8pt}
\subsection{Proof of the theorem}\label{app:proof}
As mentioned earlier, the proof employs an induction framework. To this end, we prove that \eqref{eqn:ipf converge} holds for the $k$-th time step provided that the inequality holds for $(k-1)$-th time step. Additionally, we also show that
\par\noindent\small
\begin{align}
    \mathbb{E}|\langle\pi^{N}_{k|k},|\phi|^{4}\rangle|\leq M_{k|k}\|\phi\|^{4}_{k,4},\label{eqn:M inequality k}
\end{align}
\normalsize
which is \eqref{eqn:k-1 pow 4 one} with $k-1$ replaced by $k$. The first part of Theorem~\ref{thm:ipf convergence}, i.e., the algorithm does not run into an infinite loop, follows from Lemma~\ref{lemma:infinite loop}. For the second part of the theorem, we analyze the empirical distributions obtained in different steps of the I-PF algorithm and obtain the corresponding inequalities of \eqref{eqn:ipf converge} and \eqref{eqn:M inequality k}.

\noindent\textbf{Initialization:} Following the induction framework, we first show that \eqref{eqn:ipf converge} and \eqref{eqn:M inequality k} hold for $k=0$. Consider $\{\hat{\mathbf{x}}^{i}_{0}\}_{1\leq i\leq N}$ and $\{\mathbf{y}^{i}_{0}\}_{1\leq i\leq N}$ as the i.i.d. and mutually independent (initial) samples drawn from distributions $\widetilde{\pi}^{x}_{0}(d\hat{\mathbf{x}}_{0})$ and $\rho(\mathbf{y}_{0}|\mathbf{x}_{0})$, respectively. Recall that $\widetilde{\pi}^{x}_{0}$ is the initial distribution assumed in  I-PF. Denote $\pi_{0}$ as the joint distribution for particles $\{(\hat{\mathbf{x}}^{i}_{0},\mathbf{y}^{i}_{0})\}_{1\leq i\leq N}$ such that $\langle\pi_{0},\phi\rangle=\mathbb{E}[\phi(\hat{\mathbf{x}}^{i}_{0},\mathbf{y}^{i}_{0})]$ irrespective of $i$. Also, $\phi^{N}_{0}$ is the empirical distribution obtained from particles $\{(\hat{\mathbf{x}}^{i}_{0},\mathbf{y}^{i}_{0})\}_{1\leq i\leq N}$. We have $\mathbb{E}|\langle\pi_{0}^{N},\phi\rangle-\langle\pi_{0},\phi\rangle|^{4}=\mathbb{E}\left|\frac{1}{N}\sum_{i=1}^{N}\left(\phi(\hat{\mathbf{x}}^{i}_{0},\mathbf{y}^{i}_{0})-\mathbb{E}[\phi(\hat{\mathbf{x}}^{i}_{0},\mathbf{y}^{i}_{0})]\right)\right|^{4}$. Using Lemma~\ref{lemma:sum to max inequality}, we obtain $ \mathbb{E}|\langle\pi_{0}^{N},\phi\rangle-\langle\pi_{0},\phi\rangle|^{4}\leq\frac{2}{N^{2}}\mathbb{E}\left|\phi(\hat{\mathbf{x}}^{i}_{0},\mathbf{y}^{i}_{0})-\mathbb{E}[\phi(\hat{\mathbf{x}}^{i}_{0},\mathbf{y}^{i}_{0})]\right|^{4}$ because $(\hat{\mathbf{x}}^{i}_{0},\mathbf{y}^{i}_{0})$ are identically distributed for all $i=1,2,\hdots N$. Finally, using Lemma~\ref{lemma:diff to power p inequality} with $\mathbb{E}|\phi(\hat{\mathbf{x}}^{i}_{0},\mathbf{y}^{i}_{0})|^{4}=\langle\pi_{0},|\phi|^{4}\rangle$, we have
\par\noindent\small
\begin{align}
     \mathbb{E}|\langle\pi_{0}^{N},\phi\rangle-\langle\pi_{0},\phi\rangle|^{4}\leq\frac{32}{N^{2}}\|\phi\|^{4}_{0,4}\doteq C_{0|0}\frac{\|\phi\|^{4}_{0,4}}{N^{2}},\label{eqn:initialize diff}
\end{align}
\normalsize
because $\|\phi\|_{0,4}=\textrm{max}\{1,\langle\pi_{0},|\phi|^{4}\rangle\}^{1/4}$ from Theorem~\ref{thm:ipf convergence}.

Similarly, using Lemma~\ref{lemma:diff to power p inequality}, we obtain
\par\noindent\small
\begin{align}
&\mathbb{E}|\langle\pi_{0}^{N},|\phi|^{4}\rangle-\langle\pi_{0},|\phi|^{4}\rangle|\nonumber\\
&=\mathbb{E}\left|\frac{1}{N}\sum_{i=1}^{N}\left(|\phi(\hat{\mathbf{x}}^{i}_{0},\mathbf{y}^{i}_{0})|^{4}-\mathbb{E}|\phi(\hat{\mathbf{x}}^{i}_{0},\mathbf{y}^{i}_{0})|^{4}\right)\right|\nonumber\\
&\leq\frac{1}{N}\sum_{i=1}^{N}2\mathbb{E}|\phi(\hat{\mathbf{x}}^{i}_{0},\mathbf{y}^{i}_{0})|^{4}=2\mathbb{E}|\phi(\hat{\mathbf{x}}^{i}_{0},\mathbf{y}^{i}_{0})|^{4}.\label{eqn:initial M inter}
\end{align}
\normalsize
From triangle inequality, we have $\mathbb{E}|\langle\pi_{0}^{N},|\phi|^{4}\rangle|\leq\mathbb{E}|\langle\pi_{0}^{N},|\phi|^{4}\rangle-\langle\pi_{0},|\phi|^{4}\rangle|+|\langle\pi_{0},|\phi|^{4}\rangle|$. Note that $\pi^{N}_{0}$ is a random function obtained from the randomly generated particles, but $\pi_{0}$ is a given initial distribution. Hence, using \eqref{eqn:initial M inter} and $\langle\pi_{0},|\phi|^{4}\rangle=\mathbb{E}|\phi(\hat{\mathbf{x}}^{i}_{0},\mathbf{y}^{i}_{0})|^{4}$, we have
\par\noindent\small
\begin{align}
    \mathbb{E}|\langle\pi_{0}^{N},|\phi|^{4}\rangle|\leq 3\mathbb{E}|\phi(\hat{\mathbf{x}}^{i}_{0},\mathbf{y}^{i}_{0})|^{4}\leq\|\phi\|^{4}_{0,4}\doteq M_{0|0}\|\phi\|^{4}_{0,4}.\label{eqn:initialize pow 4}
\end{align}
\normalsize

Inequalities \eqref{eqn:initialize diff} and \eqref{eqn:initialize pow 4} show that \eqref{eqn:ipf converge} and \eqref{eqn:M inequality k} hold for $k=0$. Next, we assume that these inequalities hold at $(k-1)$-th time instant, i.e.,
\par\noindent\small
\begin{align}
    &\mathbb{E}|\langle\pi_{k-1|k-1}^{N},\phi\rangle-\langle\pi_{k-1|k-1},\phi\rangle|^{4}\leq C_{k-1|k-1}\frac{\|\phi\|^{4}_{k-1,4}}{N^{2}},\label{eqn:k-1 diff}\\
    &\mathbb{E}|\langle\pi_{k-1|k-1}^{N},|\phi|^{4}\rangle|\leq M_{k-1|k-1}\|\phi\|^{4}_{k-1,4},\label{eqn:k-1 pow 4}
\end{align}
\normalsize
where \eqref{eqn:k-1 pow 4} is same as \eqref{eqn:k-1 pow 4 one}, repeated here as a ready reference.

\noindent\textbf{Importance sampling with modification:} Recall that in the sampling step, we draw particles $\{(\hat{\overline{\mathbf{x}}}^{i}_{k},\overline{\mathbf{y}}^{i}_{k})\}$ until the inequality $\frac{1}{N}\sum_{i=1}^{N}\beta(\mathbf{a}_{k}|\hat{\overline{\mathbf{x}}}^{i}_{k})\geq\gamma_{k}$ is satisfied. Finally, $\{(\hat{\widetilde{\mathbf{x}}}^{i}_{k},\widetilde{\mathbf{y}}^{i}_{k})\}$ are the particles for which the condition holds, resulting in empirical distribution $\widetilde{\pi}^{N}_{k|k-1}$ as an approximation of $\pi_{k|k-1}$. Now, we consider bounds on $\mathbb{E}|\langle\widetilde{\pi}^{N}_{k|k-1},\phi\rangle-\langle\pi_{k|k-1},\phi\rangle|^{4}$ and $\mathbb{E}|\langle\widetilde{\pi}^{N}_{k|k-1},|\phi|^{4}\rangle-\langle\pi_{k|k-1},|\phi|^{4}\rangle|$.

Consider $\langle\widetilde{\pi}^{N}_{k|k-1},\phi\rangle-\langle\pi_{k|k-1},\phi\rangle=\Pi_{1}+\Pi_{2}+\Pi_{3}$ with $\Pi_{1}$, $\Pi_{2}$ and $\Pi_{3}$ as defined in \eqref{eqn:def Pi1}-\eqref{eqn:def Pi3} with appropriate bounds derived in Lemma~\ref{lemma:Pi123 bounds}. From Minkowski's inequality, we have $\mathbb{E}^{1/4}|\langle\widetilde{\pi}^{N}_{k|k-1},\phi\rangle-\langle\pi_{k|k-1},\phi\rangle|^{4}\leq\mathbb{E}^{1/4}|\Pi_{1}|^{4}+\mathbb{E}^{1/4}|\Pi_{2}|^{4}+\mathbb{E}^{1/4}|\Pi_{3}|^{4}$ such that using \eqref{eqn:Pi1 term}-\eqref{eqn:Pi3 term} yields
\par\noindent\small
\begin{align}
      \mathbb{E}|\langle\widetilde{\pi}^{N}_{k|k-1},\phi\rangle-\langle\pi_{k|k-1},\phi\rangle|^{4}\leq\widetilde{C}_{k|k-1}\frac{\|\phi\|^{4}_{k-1,4}}{N^{2}},\label{eqn:predict diff}
\end{align}
\normalsize
where constant $\widetilde{C}_{k|k-1}\doteq(C_{\Pi_{1}}^{1/4}+C_{\Pi_{2}}^{1/4}+C_{\Pi_{3}}^{1/4})^{4}$.

Next, we consider $\mathbb{E}|\langle\widetilde{\pi}^{N}_{k|k-1},|\phi|^{4}\rangle-\langle\pi_{k|k-1},|\phi|^{4}\rangle|$and employ a similar separation method. In particular, we have
\par\noindent\small
\begin{align*}
    &\langle\widetilde{\pi}^{N}_{k|k-1},|\phi|^{4}\rangle-\langle\pi_{k|k-1},|\phi|^{4}\rangle=\langle\widetilde{\pi}^{N}_{k|k-1},|\phi|^{4}\rangle\\
    &-\frac{1}{N}\sum_{i=1}^{N}\mathbb{E}[|\phi(\hat{\widetilde{\mathbf{x}}}^{i}_{k},\widetilde{\mathbf{y}}^{i}_{k})|^{4}|\mathcal{F}_{k-1}]+\frac{1}{N}\sum_{i=1}^{N}\mathbb{E}[|\phi(\hat{\widetilde{\mathbf{x}}}^{i}_{k},\widetilde{\mathbf{y}}^{i}_{k})|^{4}|\mathcal{F}_{k-1}]\\
    &-\langle\pi^{N}_{k-1|k-1},\delta_{T}\rho|\phi|^{4}\rangle+\langle\pi^{N}_{k-1|k-1},\delta_{T}\rho|\phi|^{4}\rangle-\langle\pi_{k|k-1},|\phi|^{4}\rangle.
\end{align*}
\normalsize
Using bounds \eqref{eqn:T1 predict pow 4}-\eqref{eqn:T3 predict pow 4} from Lemma~\ref{lemma:Sampling pow 4 terms}, we obtain
\par\noindent\small
\begin{align}
    \mathbb{E}|\langle\widetilde{\pi}^{N}_{k|k-1},|\phi|^{4}\rangle-\langle\pi_{k|k-1},|\phi|^{4}\rangle|\leq\widetilde{M}_{k|k-1}\|\phi\|^{4}_{k-1,4},\label{eqn:predict pow 4}
\end{align}
\normalsize
where constant $\widetilde{M}_{k|k-1}\doteq\|\rho\|_{\infty}\\\times\left(\frac{2}{1-\epsilon_{k}}M_{k-1|k-1}+\frac{2-\epsilon_{k}}{1-\epsilon_{k}}M_{k-1|k-1}+M_{k-1|k-1}+1\right)$. The inequalities \eqref{eqn:predict diff} and \eqref{eqn:predict pow 4} are the counterparts of inequalities \eqref{eqn:ipf converge} and \eqref{eqn:M inequality k}, respectively, for the (approximate) prediction distribution $\widetilde{\pi}^{N}_{k|k-1}$ obtained from the modified importance sampling in I-PF.

\noindent\textbf{Weight computation:} The posterior distribution $\widetilde{\pi}^{N}_{k|k}$ is obtained from the prediction distribution $\widetilde{\pi}^{N}_{k|k-1}$ by associating weight $\omega^{i}_{k}$ with each particle $(\hat{\widetilde{\mathbf{x}}}^{i}_{k},\mathbf{y}^{i}_{k})$. Hence, we now analyze $\mathbb{E}|\langle\widetilde{\pi}^{N}_{k|k},\phi\rangle-\langle\pi_{k|k},\phi\rangle|^{4}$ and $\mathbb{E}|\langle\widetilde{\pi}^{N}_{k|k},|\phi|^{4}\rangle|$ based on \eqref{eqn:predict diff} and \eqref{eqn:predict pow 4}. In particular, we obtain the counterparts of inequalities \eqref{eqn:ipf converge} and \eqref{eqn:M inequality k} for $\widetilde{\pi}^{N}_{k|k}$ in following Claims~\ref{claim:update claim diff} and \ref{claim:update claim pow}, respectively.
\begin{claim}\label{claim:update claim diff}
The optimal filter's posterior distribution $\pi_{k|k}$ and its approximation $\widetilde{\pi}^{N}_{k|k}$ in I-PF satisfy
\par\noindent\small
\begin{align}
    \mathbb{E}|\langle\widetilde{\pi}^{N}_{k|k},\phi\rangle-\langle\pi_{k|k},\phi\rangle|^{4}\leq\widetilde{C}_{k|k}\frac{\|\phi\|^{4}_{k-1,4}}{N^{2}},\label{eqn:update diff}
\end{align}
\normalsize
for suitable $\widetilde{C}_{k|k}>0$.
\end{claim}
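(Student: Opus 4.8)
The plan is to observe that the optimal measurement update \eqref{eqn:optimal filter measurement update} and the weight-assignment step of I-PF are the \emph{same} normalized-reweighting operation: $\langle\pi_{k|k},\phi\rangle=\langle\pi_{k|k-1},\beta\phi\rangle/\langle\pi_{k|k-1},\beta\rangle$, and since the normalized weights in step~3 are $\widetilde{\omega}^{i}_{k}\propto\beta(\mathbf{a}_{k}|\hat{\widetilde{\mathbf{x}}}^{i}_{k})$, also $\langle\widetilde{\pi}^{N}_{k|k},\phi\rangle=\langle\widetilde{\pi}^{N}_{k|k-1},\beta\phi\rangle/\langle\widetilde{\pi}^{N}_{k|k-1},\beta\rangle$. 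Hence the claim reduces to a perturbation estimate for a ratio of two linear functionals, one applied to $\widetilde{\pi}^{N}_{k|k-1}$ and one to $\pi_{k|k-1}$. Writing $\bar\phi\doteq\phi-\langle\pi_{k|k},\phi\rangle$ (so that $\langle\pi_{k|k-1},\beta\bar\phi\rangle=0$ and $\langle\widetilde{\pi}^{N}_{k|k},\phi\rangle-\langle\pi_{k|k},\phi\rangle=\langle\widetilde{\pi}^{N}_{k|k},\bar\phi\rangle$ because $\widetilde{\pi}^{N}_{k|k}$ is a probability measure) yields the identity
\[
\langle\widetilde{\pi}^{N}_{k|k},\phi\rangle-\langle\pi_{k|k},\phi\rangle=\frac{\langle\widetilde{\pi}^{N}_{k|k-1},\beta\bar\phi\rangle-\langle\pi_{k|k-1},\beta\bar\phi\rangle}{\langle\widetilde{\pi}^{N}_{k|k-1},\beta\rangle}.
\]
Equivalently one can use the classical two-term split into a numerator error $\langle\widetilde{\pi}^{N}_{k|k-1}-\pi_{k|k-1},\beta\phi\rangle$ and a denominator error of the form $\langle\pi_{k|k},\phi\rangle\langle\widetilde{\pi}^{N}_{k|k-1}-\pi_{k|k-1},\beta\rangle$, divided by the (bounded-below) normalizer, and apply \eqref{eqn:predict diff} to each; I would use whichever keeps the bookkeeping lighter.

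The point that makes this a clean $L^{4}$ argument is that the random denominator is bounded below \emph{deterministically}: once the modification of step~2 has passed, $\langle\widetilde{\pi}^{N}_{k|k-1},\beta\rangle=\frac1N\sum_{i=1}^{N}\beta(\mathbf{a}_{k}|\hat{\widetilde{\mathbf{x}}}^{i}_{k})\ge\gamma_{k}>0$, with $\gamma_{k}$ admissible by \textbf{A1}. Therefore
\[
\bigl|\langle\widetilde{\pi}^{N}_{k|k},\phi\rangle-\langle\pi_{k|k},\phi\rangle\bigr|\le\gamma_{k}^{-1}\bigl|\langle\widetilde{\pi}^{N}_{k|k-1},\beta\bar\phi\rangle-\langle\pi_{k|k-1},\beta\bar\phi\rangle\bigr|,
\]
and taking fourth moments and invoking the prediction bound \eqref{eqn:predict diff} with test function $\beta\bar\phi$ gives $\mathbb{E}|\langle\widetilde{\pi}^{N}_{k|k},\phi\rangle-\langle\pi_{k|k},\phi\rangle|^{4}\le\gamma_{k}^{-4}\widetilde{C}_{k|k-1}\,\|\beta\bar\phi\|^{4}_{k-1,4}/N^{2}$.

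It then remains to absorb $\|\beta\bar\phi\|_{k-1,4}$ into $\|\phi\|_{k-1,4}$. Using \textbf{A2} in the form $\|\beta\|_{\infty}<\infty$ gives $\langle\pi_{s|s},|\beta\bar\phi|^{4}\rangle\le\|\beta\|_{\infty}^{4}\langle\pi_{s|s},|\bar\phi|^{4}\rangle$ for $0\le s\le k-1$, and by Minkowski $\langle\pi_{s|s},|\bar\phi|^{4}\rangle^{1/4}\le\langle\pi_{s|s},|\phi|^{4}\rangle^{1/4}+|\langle\pi_{k|k},\phi\rangle|$. The only quantity here not already dominated by $\|\phi\|_{k-1,4}$ is $|\langle\pi_{k|k},\phi\rangle|$; I would bound it by a finite constant depending only on $\mathbf{x}_{0:k},\mathbf{a}_{1:k}$ (not on $N$) via the time update and \textbf{A3}: $\langle\pi_{k|k},|\phi|^{4}\rangle=\langle\pi_{k-1|k-1},\delta_{T}\rho\,\beta|\phi|^{4}\rangle/\langle\pi_{k|k-1},\beta\rangle\le C(\mathbf{x}_{0:k},\mathbf{a}_{1:k})/\gamma_{k}$, since $|\phi|^{4}\beta\le C$ pointwise by \textbf{A3}, $\langle\pi_{k-1|k-1},\delta_{T}\rho\rangle=1$, and the denominator exceeds $\gamma_{k}$ by \textbf{A1}. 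Since $\|\phi\|_{k-1,4}\ge1$, all these factors collect into a single $N$-free constant $\widetilde{C}_{k|k}>0$, which gives \eqref{eqn:update diff}.

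The main obstacle is keeping the normalization under control in the two distinct ways it enters: the random denominator $\langle\widetilde{\pi}^{N}_{k|k-1},\beta\rangle$ must be bounded away from zero — this is exactly what the step-2 modification delivers, sparing us the event-splitting used in Lemma~\ref{lemma:infinite loop} — while the centred test function $\beta\bar\phi$ secretly carries the time-$k$ quantity $\langle\pi_{k|k},\phi\rangle$ inside its $\|\cdot\|_{k-1,4}$ norm; ruling out a blow-up there is precisely the role of \textbf{A3}, via the fact that $\delta_{T}\rho$ integrates to one so that the conditional fourth moment at time $k$ stays finite and controllably bounded.
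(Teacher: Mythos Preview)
Your argument is correct. The paper's proof follows what you call the ``classical two-term split'': it writes
\[
\langle\widetilde{\pi}^{N}_{k|k},\phi\rangle-\langle\pi_{k|k},\phi\rangle=\widetilde\Pi_{1}+\widetilde\Pi_{2},
\]
with $\widetilde\Pi_{1}=\frac{\langle\widetilde{\pi}^{N}_{k|k-1},\beta\phi\rangle}{\langle\widetilde{\pi}^{N}_{k|k-1},\beta\rangle}\cdot\frac{\langle\pi_{k|k-1},\beta\rangle-\langle\widetilde{\pi}^{N}_{k|k-1},\beta\rangle}{\langle\pi_{k|k-1},\beta\rangle}$ and $\widetilde\Pi_{2}=\frac{\langle\widetilde{\pi}^{N}_{k|k-1}-\pi_{k|k-1},\beta\phi\rangle}{\langle\pi_{k|k-1},\beta\rangle}$, bounds $|\widetilde\Pi_{1}|$ using $\|\beta\phi\|_{\infty}<\infty$ (finite by \textbf{A2}--\textbf{A3}) together with the step-2 lower bound $\langle\widetilde{\pi}^{N}_{k|k-1},\beta\rangle\ge\gamma_{k}$, then applies the prediction estimate \eqref{eqn:predict diff} separately to the test functions $\beta$ and $\beta\phi$ and combines with Minkowski. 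Your centering decomposition reaches the same destination with a single application of \eqref{eqn:predict diff} to $\beta\bar\phi$; the price is that $\bar\phi$ carries the time-$k$ constant $\langle\pi_{k|k},\phi\rangle$, which you correctly bound via \textbf{A3} and the time-update identity. The paper's route avoids that detour because $\|\beta\phi\|_{\infty}$ already absorbs the growth of $\phi$ directly through \textbf{A3}, at the cost of tracking two terms. Either bookkeeping is fine, and you explicitly flagged the paper's split as an equivalent option.
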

\begin{claimproof}
Using \eqref{eqn:optimal filter measurement update}, we again employ a separation method as $\langle\widetilde{\pi}^{N}_{k|k},\phi\rangle-\langle\pi_{k|k},\phi\rangle=\frac{\langle\widetilde{\pi}^{N}_{k|k-1},\beta\phi\rangle}{\langle\widetilde{\pi}^{N}_{k|k-1},\beta\rangle}-\frac{\langle\pi_{k|k-1},\beta\phi\rangle}{\langle\pi_{k|k-1},\beta\rangle}\doteq\widetilde{\Pi}_{1}+\widetilde{\Pi}_{2}$ where $\widetilde{\Pi}_{1}=\frac{\langle\widetilde{\pi}^{N}_{k|k-1},\beta\phi\rangle}{\langle\widetilde{\pi}^{N}_{k|k-1},\beta\rangle}-\frac{\langle\widetilde{\pi}^{N}_{k|k-1},\beta\phi\rangle}{\langle\pi_{k|k-1},\beta\rangle}$ and $\widetilde{\Pi}_{2}=\frac{\langle\widetilde{\pi}^{N}_{k|k-1},\beta\phi\rangle}{\langle\pi_{k|k-1},\beta\rangle}-\frac{\langle\pi_{k|k-1},\beta\phi\rangle}{\langle\pi_{k|k-1},\beta\rangle}$. As noted in Remark~\ref{remark:threshold}, the modification step implies that $\langle\widetilde{\pi}^{N}_{k|k},\beta\rangle\geq\gamma_{k}$. Hence, under assumptions \textbf{A2} and \textbf{A3}, we have
\par\noindent\small
\begin{align*}
    &|\widetilde{\Pi}_{1}|=\left|\frac{\langle\widetilde{\pi}^{N}_{k|k-1},\beta\phi\rangle}{\langle\widetilde{\pi}^{N}_{k|k-1},\beta\rangle}\times\frac{\langle\pi_{k|k-1},\beta\rangle-\langle\widetilde{\pi}^{N}_{k|k-1},\beta\rangle}{\langle\pi_{k|k-1},\beta\rangle}\right|\\
    &\leq\frac{\|\beta\phi\|_{\infty}}{\gamma_{k}\langle\pi_{k|k-1},\beta\rangle}|\langle\pi_{k|k-1},\beta\rangle-\langle\widetilde{\pi}^{N}_{k|k-1},\beta\rangle|.
\end{align*}
\normalsize
Now, using Minkowski's inequality, we have
\par\noindent\small
\begin{align*}
    &\mathbb{E}^{1/4}|\langle\widetilde{\pi}^{N}_{k|k},\phi\rangle-\langle\pi_{k|k},\phi\rangle|^{4}\leq\mathbb{E}^{1/4}|\widetilde{\Pi}_{1}|^{4}+\mathbb{E}^{1/4}|\widetilde{\Pi}_{2}|^{4}\\
    &\leq\frac{\|\beta\phi\|_{\infty}}{\gamma_{k}\langle\pi_{k|k-1},\beta\rangle}\mathbb{E}^{1/4}|\langle\pi_{k|k-1},\beta\rangle-\langle\widetilde{\pi}^{N}_{k|k-1},\beta\rangle|^{4}\\
    &+\frac{1}{\langle\pi_{k|k-1},\beta\rangle}\mathbb{E}^{1/4}|\langle\widetilde{\pi}^{N}_{k|k-1},\beta\phi\rangle-\langle\pi_{k|k-1},\beta\phi\rangle|^{4}.
\end{align*}
\normalsize
Finally, using \eqref{eqn:predict diff}, we obtain
\par\noindent\small
\begin{align*}
    &\mathbb{E}^{1/4}|\langle\widetilde{\pi}^{N}_{k|k},\phi\rangle-\langle\pi_{k|k},\phi\rangle|^{4}\leq\frac{\|\beta\phi\|_{\infty}}{\gamma_{k}\langle\pi_{k|k-1},\beta\rangle}\widetilde{C}^{1/4}_{k|k-1}\frac{\|\beta\|_{\infty}}{N^{1/2}}+\frac{1}{\langle\pi_{k|k-1},\beta\rangle}\\
    &\times\widetilde{C}^{1/4}_{k|k-1}\|\beta\|_{\infty}\frac{\|\phi\|_{k-1,4}}{N^{1/2}}\leq\frac{\widetilde{C}^{1/4}_{k|k-1}\|\beta\|_{\infty}}{\gamma_{k}\langle\pi_{k|k-1},\beta\rangle}(\|\beta\phi\|_{\infty}+\gamma_{k})\frac{\|\phi\|_{k-1,4}}{N^{1/2}},
\end{align*}
\normalsize
where the last inequality follows because by definition $\|\phi\|_{k-1,4}>1$. Defining $\widetilde{C}^{1/4}_{k|k}\doteq\frac{\widetilde{C}^{1/4}_{k|k-1}\|\beta\|_{\infty}}{\gamma_{k}\langle\pi_{k|k-1},\beta\rangle}(\|\beta\phi\|_{\infty}+\gamma_{k})$ proves the claim.
\end{claimproof}
\begin{claim}\label{claim:update claim pow}
The distributions $\pi_{k|k}$ and $\widetilde{\pi}^{N}_{k|k}$ satisfy
\par\noindent\small
\begin{align}
   \mathbb{E}|\langle\widetilde{\pi}^{N}_{k|k},|\phi|^{4}\rangle|\leq\widetilde{M}_{k|k}\|\phi\|^{4}_{k,4}\label{eqn:update pow 4},
\end{align}
\normalsize
for suitable $\widetilde{M}_{k|k}>0$.
\end{claim}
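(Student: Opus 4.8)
The plan is to read off the explicit form of the weighted posterior $\widetilde{\pi}^{N}_{k|k}$ and control it using the modification step together with assumption \textbf{A3}. From the weight rule of the I-PF algorithm in Section~\ref{subsec:ipf formulation} ($\widetilde{\omega}^{i}_{k}=\beta(\mathbf{a}_{k}|\hat{\widetilde{\mathbf{x}}}^{i}_{k})$, then normalized), the approximate posterior satisfies, exactly as the optimal measurement update \eqref{eqn:optimal filter measurement update}, $\langle\widetilde{\pi}^{N}_{k|k},|\phi|^{4}\rangle=\langle\widetilde{\pi}^{N}_{k|k-1},\beta|\phi|^{4}\rangle/\langle\widetilde{\pi}^{N}_{k|k-1},\beta\rangle$. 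First I would note that $\widetilde{\pi}^{N}_{k|k-1}$ is a probability measure (an average of $N$ Dirac masses), so the numerator obeys $\langle\widetilde{\pi}^{N}_{k|k-1},\beta|\phi|^{4}\rangle\le\|\beta|\phi|^{4}\|_{\infty}$, which is finite and bounded by $C(\mathbf{x}_{0:k},\mathbf{a}_{1:k})$ by \textbf{A3}. The denominator is handled by the modification step (step~2): it forces $\langle\widetilde{\pi}^{N}_{k|k-1},\beta\rangle=\frac{1}{N}\sum_{i=1}^{N}\beta(\mathbf{a}_{k}|\hat{\widetilde{\mathbf{x}}}^{i}_{k})\ge\gamma_{k}>0$. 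Combining the two bounds gives $\langle\widetilde{\pi}^{N}_{k|k},|\phi|^{4}\rangle\le C(\mathbf{x}_{0:k},\mathbf{a}_{1:k})/\gamma_{k}$ almost surely, with a constant not depending on $N$; taking expectations and using $\|\phi\|_{k,4}\ge 1$ by definition, \eqref{eqn:update pow 4} follows with $\widetilde{M}_{k|k}\doteq C(\mathbf{x}_{0:k},\mathbf{a}_{1:k})/\gamma_{k}$.

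To keep the bookkeeping parallel with Claim~\ref{claim:update claim diff}, an alternative is to write $\mathbb{E}|\langle\widetilde{\pi}^{N}_{k|k},|\phi|^{4}\rangle|\le\mathbb{E}|\langle\widetilde{\pi}^{N}_{k|k},|\phi|^{4}\rangle-\langle\pi_{k|k},|\phi|^{4}\rangle|+\langle\pi_{k|k},|\phi|^{4}\rangle$, bound the last term by $\|\phi\|^{4}_{k,4}$ directly from the definition of $\|\cdot\|_{k,4}$, and handle the first term by the same two-term split $\widetilde{\Pi}_{1},\widetilde{\Pi}_{2}$ used in the proof of Claim~\ref{claim:update claim diff} but applied to the test function $\beta|\phi|^{4}$: the $\widetilde{\Pi}_{1}$-part uses \eqref{eqn:predict diff} for the fluctuation of $\langle\cdot,\beta\rangle$ (and \textbf{A3} to bound $\langle\widetilde{\pi}^{N}_{k|k-1},\beta|\phi|^{4}\rangle$ by $C(\mathbf{x}_{0:k},\mathbf{a}_{1:k})$), while the $\widetilde{\Pi}_{2}$-part uses an analogue of \eqref{eqn:predict pow 4} for $\langle\cdot,\beta|\phi|^{4}\rangle$; since $N\ge 1$ the $N^{-1/2}$ factors are dropped to obtain an $N$-independent constant. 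Either route delivers the claim.

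The only step requiring genuine care is uniformity in $N$ for the random denominator $\langle\widetilde{\pi}^{N}_{k|k-1},\beta\rangle$: a crude $\beta\ge 0$ bound is useless, and one really does need the $\gamma_{k}$ lower bound produced by the modification step — which is precisely why step~2 was introduced (cf. Remark~\ref{remark:threshold}). The other point worth stressing is that \textbf{A3} bounds $\beta|\phi|^{4}$ rather than $|\phi|^{4}$; this is what lets us control the numerator without ever bounding the random quantity $\langle\widetilde{\pi}^{N}_{k|k-1},|\phi|^{4}\rangle$, and it sidesteps the fact that no lower bound on $\beta$ is assumed (so $\langle\pi_{k|k-1},|\phi|^{4}\rangle$ itself need not be finite). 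Beyond that the argument is a direct consequence of the bounded-likelihood assumption and the divergence guard $\gamma_{k}$.
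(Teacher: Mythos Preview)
Your first argument is correct and is strictly more elementary than the paper's own proof. You exploit directly that $\langle\widetilde{\pi}^{N}_{k|k},|\phi|^{4}\rangle=\langle\widetilde{\pi}^{N}_{k|k-1},\beta|\phi|^{4}\rangle/\langle\widetilde{\pi}^{N}_{k|k-1},\beta\rangle$, bound the numerator by $\|\beta|\phi|^{4}\|_{\infty}$ via \textbf{A3}, and bound the denominator below by $\gamma_{k}$ via the modification step, obtaining an \emph{almost sure} (hence $N$-free) bound $C(\mathbf{x}_{0:k},\mathbf{a}_{1:k})/\gamma_{k}$, which becomes \eqref{eqn:update pow 4} after dividing by $\|\phi\|^{4}_{k,4}\ge 1$. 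The paper instead takes your ``alternative'' route: it bounds $\mathbb{E}|\langle\widetilde{\pi}^{N}_{k|k},|\phi|^{4}\rangle-\langle\pi_{k|k},|\phi|^{4}\rangle|$ via the $\widetilde{\Pi}_{1},\widetilde{\Pi}_{2}$ split (using $\langle\widetilde{\pi}^{N}_{k|k-1},\beta\rangle\ge\gamma_{k}$ for $\widetilde{\Pi}_{1}$ and \eqref{eqn:predict pow 4} for $\widetilde{\Pi}_{2}$), then adds back $\langle\pi_{k|k},|\phi|^{4}\rangle\le\|\phi\|^{4}_{k,4}$, arriving at $\widetilde{M}_{k|k}=3\max\bigl\{\tfrac{2\|\beta\phi^{4}\|_{\infty}\|\beta\|_{\infty}}{\gamma_{k}\langle\pi_{k|k-1},\beta\rangle},\tfrac{\widetilde{M}_{k|k-1}\|\beta\|_{\infty}}{\langle\pi_{k|k-1},\beta\rangle},1\bigr\}$. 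The paper's route keeps the recursive dependence on $\widetilde{M}_{k|k-1}$ (and hence on the full induction machinery from Lemma~\ref{lemma:Sampling pow 4 terms}), which is consistent with how the other steps are proved but is unnecessary here: your shortcut bypasses the induction hypothesis \eqref{eqn:k-1 pow 4 one} for this particular claim and delivers a cleaner constant. Your closing remarks about why $\gamma_{k}$ and \textbf{A3} (rather than a bound on $|\phi|^{4}$ alone) are the essential inputs are exactly on point.
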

\begin{claimproof}
Using a similar separation method as in proof of Claim~\ref{claim:update claim diff}, we have
\par\noindent\small
\begin{align*}
   &\mathbb{E}|\langle\widetilde{\pi}^{N}_{k|k},|\phi|^{4}\rangle-\langle\pi_{k|k},|\phi|^{4}\rangle|\leq\mathbb{E}\left|\frac{\langle\widetilde{\pi}^{N}_{k|k-1},\beta|\phi|^{4}\rangle-\langle\pi_{k|k-1},\beta|\phi|^{4}\rangle}{\langle\pi_{k|k-1},\beta\rangle}\right|\\
   &+\mathbb{E}\left|\frac{\langle\widetilde{\pi}^{N}_{k|k-1},\beta|\phi|^{4}\rangle}{\langle\widetilde{\pi}^{N}_{k|k-1},\beta\rangle}\times\frac{\langle\pi_{k|k-1},\beta\rangle-\langle\widetilde{\pi}^{N}_{k|k-1},\beta\rangle}{\langle\pi_{k|k-1},\beta\rangle}\right|.
\end{align*}
\normalsize
Again using assumption \textbf{A2} and \eqref{eqn:predict pow 4}, we obtain
\par\noindent\small
\begin{align*}
   &\mathbb{E}|\langle\widetilde{\pi}^{N}_{k|k},|\phi|^{4}\rangle-\langle\pi_{k|k},|\phi|^{4}\rangle|\leq\frac{\|\beta\phi^{4}\|_{\infty}}{\gamma_{k}\langle\pi_{k|k-1},\beta\rangle}\\
   &\times\mathbb{E}|\langle\pi_{k|k-1},\beta\rangle-\langle\widetilde{\pi}^{N}_{k|k-1},\beta\rangle|+\frac{1}{\langle\pi_{k|k-1},\beta\rangle}\widetilde{M}_{k|k-1}\|\beta\|_{\infty}\|\phi\|^{4}_{k-1,4}\\
   &\leq\frac{\|\beta\phi^{4}\|_{\infty}2\|\beta\|_{\infty}}{\gamma_{k}\langle\pi_{k|k-1},\beta\rangle}\|\phi\|^{4}_{k-1,4}+\frac{\widetilde{M}_{k|k-1}\|\beta\|_{\infty}}{\langle\pi_{k|k-1},\beta\rangle}\|\phi\|^{4}_{k-1,4},
\end{align*}
\normalsize
because by definition $\|\phi\|_{k-1,4}>1$. Hence, $\mathbb{E}|(\widetilde{\pi}^{N}_{k|k},|\phi|^{4})|\leq\frac{\|\beta\phi^{4}\|_{\infty}2\|\beta\|_{\infty}}{\gamma_{k}\langle\pi_{k|k-1},\beta\rangle}\|\phi\|^{4}_{k-1,4}+\frac{\widetilde{M}_{k|k-1}\|\beta\|_{\infty}}{\langle\pi_{k|k-1},\beta\rangle}\|\phi\|^{4}_{k-1,4}+\langle\pi_{k|k},|\phi|^{4}\rangle$. But, $\langle\pi_{k|k},|\phi|^{4}\rangle\leq\|\phi\|^{4}_{k,4}$ and $\|\phi\|_{k,4}$ is increasing in $k$ such that $\mathbb{E}|(\widetilde{\pi}^{N}_{k|k},|\phi|^{4})|\leq 3\;\textrm{max}\left\{\frac{\|\beta\phi^{4}\|_{\infty}2\|\beta\|_{\infty}}{\gamma_{k}\langle\pi_{k|k-1},\beta\rangle},\frac{\widetilde{M}_{k|k-1}\|\beta\|_{\infty}}{\langle\pi_{k|k-1},\beta\rangle},1\right\}\|\phi\|^{4}_{k,4}$ which proves the claim with $\widetilde{M}_{k|k}\doteq3\;\textrm{max}\left\{\frac{\|\beta\phi^{4}\|_{\infty}2\|\beta\|_{\infty}}{\gamma_{k}\langle\pi_{k|k-1},\beta\rangle},\frac{\widetilde{M}_{k|k-1}\|\beta\|_{\infty}}{\langle\pi_{k|k-1},\beta\rangle},1\right\}$.
\end{claimproof}

\noindent\textbf{Resampling:} In this step, we draw $N$ independent particles $(\hat{\mathbf{x}}^{i}_{k},\mathbf{y}^{i}_{k})$ from the posterior distribution $\widetilde{\pi}^{N}_{k|k}$ and obtain the empirical distribution $\pi^{N}_{k|k}$ with equally weighted particles. Note that $\pi^{N}_{k|k}$ also approximates $\pi_{k|k}$ and is the I-PF's output posterior distribution. Now, we finally show that \eqref{eqn:ipf converge} and \eqref{eqn:M inequality k} hold true if we assume \eqref{eqn:k-1 diff} and \eqref{eqn:k-1 pow 4} hold at $(k-1)$-th time, which completes the induction proof. To this end, we analyze $\mathbb{E}|\langle\pi^{N}_{k|k},\phi\rangle-\langle\pi_{k|k},\phi\rangle|^{4}$ and $\mathbb{E}|\langle\pi^{N}_{k|k},|\phi|^{4}\rangle|$ based on \eqref{eqn:update diff} and \eqref{eqn:update pow 4}, and prove the following claims.
\begin{claim}\label{claim:k diff}
The distribution $\pi_{k|k}$ and its approximation $\pi^{N}_{k|k}$ satisfy $\mathbb{E}|\langle\pi^{N}_{k|k},\phi\rangle-\langle\pi_{k|k},\phi\rangle|^{4}\leq C_{k|k}\frac{\|\phi\|^{4}_{k,4}}{N^{2}}$.
\end{claim}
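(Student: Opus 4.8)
The plan is to split the resampling error from the error already controlled at the weighting stage and bound the two pieces separately. Write
\begin{align*}
\langle\pi^{N}_{k|k},\phi\rangle-\langle\pi_{k|k},\phi\rangle=\big(\langle\pi^{N}_{k|k},\phi\rangle-\langle\widetilde{\pi}^{N}_{k|k},\phi\rangle\big)+\big(\langle\widetilde{\pi}^{N}_{k|k},\phi\rangle-\langle\pi_{k|k},\phi\rangle\big),
\end{align*}
and apply Minkowski's inequality to the fourth moment, so that it suffices to bound $\mathbb{E}^{1/4}$ of each summand. The second summand is exactly Claim~\ref{claim:update claim diff}, giving $\mathbb{E}|\langle\widetilde{\pi}^{N}_{k|k},\phi\rangle-\langle\pi_{k|k},\phi\rangle|^{4}\le\widetilde{C}_{k|k}\|\phi\|^{4}_{k-1,4}/N^{2}$, and since $\|\phi\|_{k,4}$ is non-decreasing in $k$ this is at most $\widetilde{C}_{k|k}\|\phi\|^{4}_{k,4}/N^{2}$.

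For the first summand I would condition on the $\sigma$-algebra $\mathcal{G}_{k}$ generated by the weighted particles $\{(\hat{\widetilde{\mathbf{x}}}^{i}_{k},\widetilde{\mathbf{y}}^{i}_{k}),\omega^{i}_{k}\}_{1\le i\le N}$ that define $\widetilde{\pi}^{N}_{k|k}$. Given $\mathcal{G}_{k}$ the resampled particles $(\hat{\mathbf{x}}^{i}_{k},\mathbf{y}^{i}_{k})$ are i.i.d.\ draws from $\widetilde{\pi}^{N}_{k|k}$, so $\xi_{i}\doteq\phi(\hat{\mathbf{x}}^{i}_{k},\mathbf{y}^{i}_{k})-\langle\widetilde{\pi}^{N}_{k|k},\phi\rangle$ are conditionally i.i.d.\ with $\mathbb{E}[\xi_{i}|\mathcal{G}_{k}]=0$ and $\langle\pi^{N}_{k|k},\phi\rangle-\langle\widetilde{\pi}^{N}_{k|k},\phi\rangle=\frac{1}{N}\sum_{i=1}^{N}\xi_{i}$. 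Lemma~\ref{lemma:sum to max inequality} (conditional fourth moments are finite by Claim~\ref{claim:update claim pow}) followed by Lemma~\ref{lemma:diff to power p inequality}, which gives $\mathbb{E}[|\xi_{i}|^{4}|\mathcal{G}_{k}]\le 2^{4}\langle\widetilde{\pi}^{N}_{k|k},|\phi|^{4}\rangle$, yields
\begin{align*}
\mathbb{E}\Big[\big|\langle\pi^{N}_{k|k},\phi\rangle-\langle\widetilde{\pi}^{N}_{k|k},\phi\rangle\big|^{4}\,\big|\,\mathcal{G}_{k}\Big]\le\frac{2^{5}}{N^{2}}\langle\widetilde{\pi}^{N}_{k|k},|\phi|^{4}\rangle.
\end{align*}
Taking the unconditional expectation and invoking \eqref{eqn:update pow 4} bounds this by $2^{5}\widetilde{M}_{k|k}\|\phi\|^{4}_{k,4}/N^{2}$.

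Combining the two estimates via Minkowski's inequality gives $\mathbb{E}^{1/4}|\langle\pi^{N}_{k|k},\phi\rangle-\langle\pi_{k|k},\phi\rangle|^{4}\le\big((2^{5}\widetilde{M}_{k|k})^{1/4}+\widetilde{C}_{k|k}^{1/4}\big)\|\phi\|_{k,4}/N^{1/2}$, i.e.\ the stated bound with $C_{k|k}\doteq\big((2^{5}\widetilde{M}_{k|k})^{1/4}+\widetilde{C}_{k|k}^{1/4}\big)^{4}$, a constant independent of $N$; together with Claim~\ref{claim:k diff}'s companion bound on $\mathbb{E}|\langle\pi^{N}_{k|k},|\phi|^{4}\rangle|$ (obtained the same way, using Lemma~\ref{lemma:diff to power p inequality} directly on $|\phi|^{4}$ and then \eqref{eqn:update pow 4}) this closes the induction. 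I do not expect a genuine analytic obstacle: the argument is the standard ``sampling error plus propagated error'' template. The step needing the most care is the conditioning — one must check that, conditionally on $\mathcal{G}_{k}$, the resampled particles are genuinely i.i.d.\ with law $\widetilde{\pi}^{N}_{k|k}$ so that Lemma~\ref{lemma:sum to max inequality} applies and $\langle\widetilde{\pi}^{N}_{k|k},|\phi|^{4}\rangle$ can be treated as a $\mathcal{G}_{k}$-measurable factor pulled outside the conditional expectation before applying Claim~\ref{claim:update claim pow}.
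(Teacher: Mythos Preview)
Your proposal is correct and follows essentially the same approach as the paper: the same two-term decomposition into the resampling error $\langle\pi^{N}_{k|k},\phi\rangle-\langle\widetilde{\pi}^{N}_{k|k},\phi\rangle$ and the weighting-stage error $\langle\widetilde{\pi}^{N}_{k|k},\phi\rangle-\langle\pi_{k|k},\phi\rangle$, the same conditioning on $\mathcal{G}_{k}$, the same use of Lemmas~\ref{lemma:sum to max inequality} and~\ref{lemma:diff to power p inequality} together with \eqref{eqn:update pow 4} for the first term and \eqref{eqn:update diff} for the second, combined via Minkowski to obtain the identical constant $C_{k|k}^{1/4}=(2^{5}\widetilde{M}_{k|k})^{1/4}+\widetilde{C}_{k|k}^{1/4}$.
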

\begin{claimproof}
Consider the separation $\langle\pi^{N}_{k|k},\phi\rangle-\langle\pi_{k|k},\phi\rangle=\overline{\Pi}_{1}+\overline{\Pi}_{2}$ where $\overline{\Pi}_{1}=\langle\pi^{N}_{k|k},\phi\rangle-\langle\widetilde{\pi}^{N}_{k|k},\phi\rangle$ and $\overline{\Pi}_{2}=\langle\widetilde{\pi}^{N}_{k|k},\phi\rangle-\langle\pi_{k|k},\phi\rangle$. Denote $\mathcal{G}_{k}$ as the $\sigma$-algebra generated by $\{(\hat{\widetilde{\mathbf{x}}}^{i}_{k},\widetilde{\mathbf{y}}^{i}_{k})\}_{i=1}^{N}$. Since $(\hat{\mathbf{x}}^{i}_{k},\mathbf{y}^{i}_{k})$ are drawn independently from $\widetilde{\pi}^{N}_{k|k}$, we have $\mathbb{E}[\phi(\hat{\mathbf{x}}^{i}_{k},\mathbf{y}^{i}_{k})|\mathcal{G}_{k}]=\langle\widetilde{\pi}^{N}_{k|k},\phi\rangle$ such that $\overline{\Pi}_{1}=\frac{1}{N}\sum_{i=1}^{N}(\phi(\hat{\mathbf{x}}^{i}_{k},\mathbf{y}^{i}_{k})-\mathbb{E}[\phi(\hat{\mathbf{x}}^{i}_{k},\mathbf{y}^{i}_{k})|\mathcal{G}_{k}])$. Using Lemma~\ref{lemma:sum to max inequality} and \ref{lemma:diff to power p inequality}, and finally \eqref{eqn:update pow 4}, we obtain
\par\noindent\small
\begin{align}
\mathbb{E}[|\overline{\Pi}_{1}|^{4}|\mathcal{G}_{k}]\leq 2^{5}\widetilde{M}_{k|k}\frac{\|\phi\|^{4}_{k,4}}{N^{2}}.\label{eqn:resample P1}
\end{align}
\normalsize
Again, using the Minkowski's inequality, and \eqref{eqn:update diff} and \eqref{eqn:resample P1}, we have
\par\noindent\small
\begin{align*}
    &\mathbb{E}^{1/4}|\langle\pi^{N}_{k|k},\phi\rangle-\langle\pi_{k|k},\phi\rangle|^{4}\leq\mathbb{E}^{1/4}|\overline{\Pi}_{1}|^{4}+\mathbb{E}^{1/4}|\overline{\Pi}_{2}|^{4}\leq (2^{5}\widetilde{M}_{k|k})^{1/4}\\
    &\times\frac{\|\phi\|_{k,4}}{N^{1/2}}+\widetilde{C}^{1/4}_{k|k}\frac{\|\phi\|_{k-1,4}}{N^{1/2}}\leq ((2^{5}\widetilde{M}_{k|k})^{1/4}+\widetilde{C}^{1/4}_{k|k})\frac{\|\phi\|_{k,4}}{N^{1/2}},
\end{align*}
\normalsize
because $\|\phi\|_{k,4}$ is increasing in $k$. Hence, defining $C_{k|k}^{1/4}\doteq ((2^{5}\widetilde{M}_{k|k})^{1/4}+\widetilde{C}^{1/4}_{k|k})$ yields the inequality in the claim.
\end{claimproof}
\begin{claim}\label{claim:k pow 4}
The distributions $\pi_{k|k}$ and $\pi^{N}_{k|k}$ satisfy $\mathbb{E}|\langle\pi^{N}_{k|k},|\phi|^{4}\rangle|\leq M_{k|k}\|\phi\|^{4}_{k,4}$.
\end{claim}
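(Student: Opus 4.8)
The plan is to exploit that multinomial resampling is conditionally unbiased, so that taking expectations collapses $\langle\pi^{N}_{k|k},|\phi|^{4}\rangle$ onto $\langle\widetilde{\pi}^{N}_{k|k},|\phi|^{4}\rangle$, which is already controlled by Claim~\ref{claim:update claim pow}. First I would note that $\langle\pi^{N}_{k|k},|\phi|^{4}\rangle=\frac{1}{N}\sum_{i=1}^{N}|\phi(\hat{\mathbf{x}}^{i}_{k},\mathbf{y}^{i}_{k})|^{4}\geq 0$, so the absolute value in the claim is redundant and it suffices to bound $\mathbb{E}\langle\pi^{N}_{k|k},|\phi|^{4}\rangle$.

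Next, let $\mathcal{G}_{k}$ denote the $\sigma$-algebra generated by $\{(\hat{\widetilde{\mathbf{x}}}^{i}_{k},\widetilde{\mathbf{y}}^{i}_{k})\}_{1\leq i\leq N}$, exactly as in the proof of Claim~\ref{claim:k diff}. Since the resampled particles $(\hat{\mathbf{x}}^{i}_{k},\mathbf{y}^{i}_{k})$ are drawn i.i.d.\ from $\widetilde{\pi}^{N}_{k|k}$ conditionally on $\mathcal{G}_{k}$, one has $\mathbb{E}[|\phi(\hat{\mathbf{x}}^{i}_{k},\mathbf{y}^{i}_{k})|^{4}|\mathcal{G}_{k}]=\langle\widetilde{\pi}^{N}_{k|k},|\phi|^{4}\rangle$ for every $i$, and hence $\mathbb{E}[\langle\pi^{N}_{k|k},|\phi|^{4}\rangle|\mathcal{G}_{k}]=\langle\widetilde{\pi}^{N}_{k|k},|\phi|^{4}\rangle$. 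Taking total expectation and invoking Claim~\ref{claim:update claim pow} then gives $\mathbb{E}\langle\pi^{N}_{k|k},|\phi|^{4}\rangle=\mathbb{E}[\langle\widetilde{\pi}^{N}_{k|k},|\phi|^{4}\rangle]\leq\widetilde{M}_{k|k}\|\phi\|^{4}_{k,4}$, so the claim follows with $M_{k|k}\doteq\widetilde{M}_{k|k}$. An equivalent route, mirroring Claim~\ref{claim:k diff}, is to write the difference $\langle\pi^{N}_{k|k},|\phi|^{4}\rangle-\langle\widetilde{\pi}^{N}_{k|k},|\phi|^{4}\rangle$ as a conditionally centered average (whose expectation vanishes) and then combine \eqref{eqn:update pow 4} with $\langle\pi_{k|k},|\phi|^{4}\rangle\leq\|\phi\|^{4}_{k,4}$; this yields the same conclusion with a slightly larger constant.

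With Claims~\ref{claim:k diff} and \ref{claim:k pow 4} in hand, the inductive step is complete: \eqref{eqn:ipf converge} and \eqref{eqn:M inequality k} hold at time $k$ whenever \eqref{eqn:k-1 diff} and \eqref{eqn:k-1 pow 4} hold at time $k-1$, while the base case $k=0$ is supplied by \eqref{eqn:initialize diff} and \eqref{eqn:initialize pow 4}; this proves Theorem~\ref{thm:ipf convergence}. I do not expect a genuine obstacle in this last claim: the only points needing care are stating the conditioning on $\mathcal{G}_{k}$ precisely so that the unbiasedness of resampling is applied correctly, and checking that $\widetilde{M}_{k|k}$---and hence $M_{k|k}$---is independent of $N$, which holds because the quantities $\widetilde{M}_{k|k-1}$, $\|\beta\|_{\infty}$, $\|\beta\phi^{4}\|_{\infty}$, $\gamma_{k}$ and $\langle\pi_{k|k-1},\beta\rangle$ entering $\widetilde{M}_{k|k}$ are all $N$-independent.
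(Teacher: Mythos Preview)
Your argument is correct and in fact cleaner than the paper's. The paper does not use the tower property directly; instead it bounds $\mathbb{E}|\langle\pi^{N}_{k|k},|\phi|^{4}\rangle-\langle\pi_{k|k},|\phi|^{4}\rangle|$ by splitting off $\langle\widetilde{\pi}^{N}_{k|k},|\phi|^{4}\rangle$, applying Lemma~\ref{lemma:diff to power p inequality} (with $p=1$) to the conditionally centered piece to get $2\mathbb{E}\langle\widetilde{\pi}^{N}_{k|k},|\phi|^{4}\rangle$, and then adding back $\langle\pi_{k|k},|\phi|^{4}\rangle\leq\|\phi\|^{4}_{k,4}$, arriving at $M_{k|k}=3\widetilde{M}_{k|k}+2$. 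Your observation that $\langle\pi^{N}_{k|k},|\phi|^{4}\rangle\geq 0$ together with $\mathbb{E}[\langle\pi^{N}_{k|k},|\phi|^{4}\rangle\,|\,\mathcal{G}_{k}]=\langle\widetilde{\pi}^{N}_{k|k},|\phi|^{4}\rangle$ short-circuits all of this and yields the sharper constant $M_{k|k}=\widetilde{M}_{k|k}$; the ``equivalent route'' you sketch at the end is essentially the paper's argument. Both approaches are valid, but yours exploits the nonnegativity of $|\phi|^{4}$ and the unbiasedness of multinomial resampling more efficiently.
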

\begin{claimproof}
Since, $(\hat{\mathbf{x}}^{i}_{k},\mathbf{y}^{i}_{k})\sim\widetilde{\pi}^{N}_{k,k}$, we have $\langle\widetilde{\pi}^{N}_{k|k},|\phi|^{4}\rangle=\mathbb{E}[|\phi(\hat{\mathbf{x}}^{i}_{k},\mathbf{y}^{i}_{k})|^{4}|\mathcal{G}_{k}]$. Then, using Lemma~\ref{lemma:diff to power p inequality} and \eqref{eqn:update pow 4}, we obtain
\par\noindent\small
\begin{align*}
    &\mathbb{E}|\langle\pi^{N}_{k|k},|\phi|^{4}\rangle-\langle\pi_{k|k},|\phi|^{4}\rangle|\\
    &\leq\mathbb{E}|\langle\pi^{N}_{k|k},|\phi|^{4}\rangle-\langle\widetilde{\pi}^{N}_{k|k},|\phi|^{4}\rangle|+\mathbb{E}|\langle\widetilde{\pi}^{N}_{k|k},|\phi|^{4}\rangle-\langle\pi_{k|k},|\phi|^{4}\rangle|\\
    &\leq\mathbb{E}\left|\frac{1}{N}\sum_{i=1}^{N}(|\phi(\hat{\mathbf{x}}^{i}_{k},\mathbf{y}^{i}_{k})|^{4}-\mathbb{E}[|\phi(\hat{\mathbf{x}}^{i}_{k},\mathbf{y}^{i}_{k})|^{4}|\mathcal{G}_{k}])\right|+\mathbb{E}|\langle\widetilde{\pi}_{k|k}^{N},|\phi|^{4}\rangle|\\
    &\hspace{-0.5cm}+\langle\pi_{k|k},|\phi|^{4}\rangle\leq\frac{1}{N}\sum_{i=1}^{N}2\mathbb{E}[\mathbb{E}[|\phi(\hat{\mathbf{x}}^{i}_{k},\mathbf{y}^{i}_{k})|^{4}|\mathcal{G}_{k}]]+\widetilde{M}_{k|k}\|\phi\|^{4}_{k,4}+\|\phi\|^{4}_{k,4}\\
    &=2\mathbb{E}[(\widetilde{\pi}^{N}_{k|k},|\phi|^{4})]+(\widetilde{M}_{k|k}+1)\|\phi\|^{4}_{k,4}\leq(3\widetilde{M}_{k|k}+1)\|\phi\|^{4}_{k,4},
\end{align*}
\normalsize
Hence, $\mathbb{E}|\langle\pi^{N}_{k|k},|\phi|^{4}\rangle|\leq(3\widetilde{M}_{k|k}+2)\|\phi\|^{4}_{k,4}$ which proves the claim with $M_{k|k}\leq(3\widetilde{M}_{k|k}+2)$.
\end{claimproof}

Claims~\ref{claim:k diff} and \ref{claim:k pow 4} show that inequalities \eqref{eqn:ipf converge} and \eqref{eqn:M inequality k} hold for $k$-th time instant, completing the induction proof.

\balance
\bibliographystyle{IEEEtran}
\bibliography{references}
\end{document}